\documentclass[11pt,reqno]{amsart}

\usepackage[T1]{fontenc}
\usepackage[utf8]{inputenc}
\usepackage[letterpaper,margin=1in]{geometry}
\usepackage{amsmath}
\usepackage{amssymb}
\usepackage{amsthm}
\usepackage[colorlinks=true,linkcolor=blue,citecolor=red,urlcolor=blue]{hyperref}

\allowdisplaybreaks
\providecommand{\newblock}{\hskip .11em plus .33em minus .07em}

\theoremstyle{plain}
\newtheorem{theorem}{Theorem}[section]
\newtheorem{corollary}[theorem]{Corollary}

\newtheorem{lemma}[theorem]{Lemma}

\theoremstyle{definition}
\newtheorem{definition}[theorem]{Definition}
\newtheorem{remark}[theorem]{Remark}

\newtheorem{example}{Example}

\title[Optimal control of nonlinear heat models]
      {Optimal control of a class of nonlinear heat conduction models}

\author[D. Menezes]{Denilson Menezes}
\address[D. Menezes]{Departamento de Ci\^encia e Tecnologia, Universidade
  Federal Rural do Semi-\'Arido, Cara\'ubas, RN, Brazil
  \newline\indent
  Chair for Dynamics, Control, Machine Learning and Numerics, Department of
  Mathematics, Friedrich--Alexander--Universit\"at Erlangen--N\"urnberg,
  Cauerstr.\ 11, 91058 Erlangen, Germany}
\email{denilson.menezes@ufersa.edu.br}

\author[J. L\'imaco]{Juan L\'imaco}
\address[J. L\'imaco]{Instituto de Matem\'atica e Estat\'istica, Universidade
  Federal Fluminense, Niter\'oi, RJ, Brazil}
\email{jlimaco@id.uff.br}

\subjclass[2020]{35K55, 35K59, 35Q79, 49J20, 49K20, 49K40}

\keywords{Optimal control, quasilinear parabolic equations, nonlinear heat
  conduction, nonlinear thermal conductivity, source identification}

\date{\today}

\thanks{This is a preprint of an article accepted for publication in
  \emph{Evolution Equations and Control Theory}. The final published version
  will be available at the publisher's site.}

\begin{document}

\begin{abstract}
We study an optimal control problem for a quasilinear parabolic equation modeling nonlinear heat conduction during heat treatment of metals. Under general assumptions, we prove existence of an optimal control and characterize the associated optimal control-state pair in a practically relevant framework. The main difficulty is the quadratic gradient term, which we handle through energy estimates and differentiability properties of the control-to-state map.
\end{abstract}

\maketitle


\section{Introduction}


Let \( T > 0 \) be a fixed time horizon and let \( I \subset \mathbb{R} \) be a bounded open interval of length \( |I| \). Define the space-time domain \( Q := (0, T) \times I \) and its lateral boundary \( \Sigma := (0, T) \times \partial I \). In this paper, we investigate the optimal controllability of the following nonlinear parabolic partial differential equation (PDE):
\begin{equation} \label{MainModel}
    \begin{cases}
        u_t - \left(a(u) u_x \right)_x + b(u) u_x^2 = f, & \text{in } Q, \\
        u = 0, & \text{on } \Sigma, \\
        u(0,x) = u_0(x), & \text{for } x \in I,
    \end{cases}
\end{equation}
where \( f \) acts as a distributed control function. The functions \( a(\cdot) \) and \( b(\cdot) \) are state-dependent coefficients satisfying specific structural assumptions detailed below.

The primary motivation for studying this model comes from nonlinear heat conduction theory. In particular, it describes temperature evolution in certain metallic materials undergoing complex heat treatment processes \cite{rincon2006nonlinear,teixeira2009numerical}. Within specific temperature ranges, the physical properties of these materials (such as thermal conductivity and heat capacity) exhibit significant dependence on temperature.

To derive \eqref{MainModel} from physical principles, let \( \theta(t,x) \) denote the temperature field. The conservation of energy reads:
\[
\rho(\theta) \dot{e} + q_x = \widetilde{f},
\]
where \( \rho \) is density, \( e \) is internal energy density, \( q = -\kappa(\theta)\theta_x \) is the heat flux (Fourier's law), and \( \widetilde{f} \) is a heat source. Assuming \( e = e(\theta) \) with \( c(\theta) = \partial e/\partial\theta > 0 \), and introducing \( u = e(\theta) \), \( \alpha(u) = \kappa(\theta)/c(\theta) \), and a source term of the form \( \widetilde{f}(t,x,u) = \rho(u)f(t,x) \), one obtains \eqref{MainModel} with
\[
a(u) = \frac{\alpha(u)}{\rho(u)}, \qquad b(u) = -\frac{\alpha(u)}{\rho^2(u)}\rho'(u).
\]
The homogeneous Dirichlet boundary condition is consistent with the references \cite{rincon2006nonlinear,teixeira2009numerical}. In typical heat-treatment scenarios for metals, one expects $p$ and $q$ in the growth conditions below to be moderate (often $p = q = 2$).

We note that the quasilinear equation in non-divergence form
\[
u_t - a(u) u_{xx} = f
\]
can be transformed into \eqref{MainModel} via the identity
\[
a(u) u_{xx} = (a(u) u_x)_x - a'(u) u_x^2,
\]
which aligns with our framework under appropriate assumptions. While non-divergence forms are sometimes preferred for parameter calibration \cite{cannon1973determining,borukhov2005functional,ramalingam2006accurate,vala2015computational}, the divergence form \eqref{MainModel} is more natural for variational analysis and control theory.


Optimal control of quasilinear parabolic equations has been studied extensively; see, e.g., \cite{casas1995optimal, abdulla2019optimal, bonifacius2018second, casas2018analysis}. The work \cite{casas2018analysis} treats a broad class of nonlinearities using truncation methods. However, the presence of the quadratic gradient term \( b(u) u_x^2 \) in \eqref{MainModel} introduces significant analytical challenges. Unlike lower-order or state-dependent nonlinearities, the quadratic gradient term couples the state and its spatial derivative nonlinearly, which substantially affects the variational structure and regularity requirements of the problem. In particular, truncation techniques are unable to guarantee coercivity or monotonicity of the associated operator when applied to such terms, thereby preventing the direct application of the methods and existence theory developed in \cite{casas2018analysis}. To the best of our knowledge, no existing result covers this specific structure, and alternative approaches are required to ensure well-posedness and optimal control in models exhibiting this nonlinear gradient coupling.

Related studies in fluid mechanics have established fundamental results on the controllability of quasilinear models governing non-Newtonian fluids \cite{arada2012optimal, arada2013optimal}, and substantial progress has been made on exact and null controllability for parabolic systems \cite{fernandez2000null, doubova2002controllability, fernandez2017theoretical, de2023local, fernandez2023local}; for an overview of the broader landscape we refer to the survey \cite{zuazua2007controllability}. Particularly close in spirit to our setting is \cite{doubova2002controllability}, which addresses controllability for parabolic systems with nonlinearities involving both the state and the gradient---a structural feature shared with~\eqref{MainModel}---although the goal there is to drive the state to a target rather than to minimize a quadratic cost. Recent advances in the optimal control theory of quasilinear parabolic equations incorporating gradient nonlinearities include \cite{bonifacius2025optimal}, which tackles higher-dimensional configurations $(d = 2, 3)$ through pointwise constraints on the state gradient to regulate the growth of quadratic gradient terms and prevent finite-time blow-up. While this framework imposes explicit gradient-state constraints on the admissible set, our work complements these developments by adopting a distinct perspective: we analyze the one-dimensional case under data smallness conditions and provide a complete characterization of optimality through a coupled forward-backward system, thereby avoiding the necessity of incorporating gradient constraints into the control admissibility conditions. Our principal contribution lies in establishing a rigorous theory for \emph{optimal distributed control} of the model \eqref{MainModel} with quadratic gradient nonlinearities---a structure that captures essential phenomena in nonlinear thermal conduction during materials heat treatment processes.

We consider the following optimal control problem:
\begin{equation} \label{OptProblem}
    \min_{f \in \mathcal{U}_{ad}} J(f), \quad \text{where} \quad J(f) = \frac{1}{2} \| Su(f) - \Xi_d \|_V^2 + \frac{1}{2} (N f, f)_{L^2(Q)}.
\end{equation}
Here, \( \mathcal{U}_{ad} \subset H^1(0,T; L^2(I)) \) is a closed convex set of admissible controls, \( u(f) \) is the state corresponding to control \( f \) (solution of \eqref{MainModel}), \( S: W \to V \) is a continuous linear observation operator mapping the state space \( W \) (defined in Section~\ref{sec:WPofModel}) to a Hilbert space \( V \), \( \Xi_d \in V \) represents desired target data, and \( N: L^2(Q) \to L^2(Q) \) is a positive definite operator modeling the control cost. This formulation encompasses both trajectory tracking and source identification problems \cite{lions1971optimal}.

\begin{remark}[On the choice of the control space] \label{rem:choice_control_space}
The natural framework for distributed control of \emph{linear} parabolic equations is $f \in L^2(Q)$ with pointwise constraints; see, e.g., \cite{lions1971optimal,casas1995optimal}. The quadratic gradient nonlinearity $b(u)u_x^2$ in \eqref{MainModel}, however, demands additional time-regularity of the source: the well-posedness theory of Section~\ref{sec:WPofModel} relies on bounding $u_t$ via the time-differentiated equation, in which $f_t$ appears as a source. Concretely, our energy estimates for $u_t$ in $L^2(0,T;H^1_0(I))$ require $f_t \in L^2(Q)$, and Estimate IV of Theorem~\ref{thm:gateaux_differentiability} requires the analogous bound on the perturbation $h_t = (g-f)_t$. Working with $\mathcal{U}_{ad} \subset H^1(0,T;L^2(I))$ is therefore not a matter of mathematical convenience but a structural requirement of the nonlinearity; we comment further on this in Remark~\ref{rem:strong_solution_choice} below.
\end{remark}

Our analysis relies on the following technical assumptions.

\begin{enumerate}
    \item[(\textbf{A1})] \label{A1} \( a \in C^2(\mathbb{R}) \), and there exist constants \( a_0 > 0 \), \( M > 0 \), and \( p \geq 2 \) such that
    \[
    a(s) \geq a_0, \quad a(s) \leq M(1 + |s|^p), \quad |a'(s)| \leq M(1 + |s|^{p-1}), \quad |a''(s)| \leq M(1 + |s|^{p-2}).
    \]
    \item[(\textbf{A2})] \label{A2} \( b \in C^2(\mathbb{R}) \), and there exist \( M > 0 \) and \( q \geq 1 \) such that
    \[
    \begin{gathered}
    b(s)s \geq 0, \qquad |b(s)| \leq M(1 + |s|^q), \\
    |b'(s)| \leq M(1 + |s|^{(q-1)_+}), \qquad |b''(s)| \leq M(1 + |s|^{(q-2)_+}),
    \end{gathered}
    \]
    where $r_+ := \max(r,0)$.
    \item[(\textbf{A3})] \label{A3} The initial data and control satisfy
    \[
    u_0 \in H^2(I) \cap H^1_0(I), \quad f, f_t \in L^2(Q),
    \]
    and there exists \( \delta > 0 \) such that
    \[
    \|u_0\|_{H^2(I)}^2 + \|f\|_{L^2(Q)}^2 + \|f_t\|_{L^2(Q)}^2 \leq \delta^2.
    \]
\end{enumerate}

\begin{example}\label{ex:nonlinearities}
The functions \( a(s) = s^p + 1 \) (with \( p \) even, $p \geq 2$) and \( b(s) = s^q \) (with \( q \) odd, $q \geq 1$) satisfy (\textbf{A1}) and (\textbf{A2}). Indeed, for $a(s) = s^p + 1$ with $p$ even one has $a(s) \geq 1 > 0$, $|a(s)| \leq 1 + |s|^p \leq 2(1 + |s|^p)$, and the bounds on $a'$, $a''$ follow directly. For $b(s) = s^q$ with $q$ odd, $b(s)s = s^{q+1} \geq 0$ and the growth bounds are immediate; the simplest case $q = 1$ corresponds to $b(s) = s$, for which $|b'(s)| = 1$ and $|b''(s)| = 0$ trivially fit the (\textbf{A2}) bounds with $(q-1)_+ = (q-2)_+ = 0$. A second physically motivated example is $a(s) = a_0 + a_1 s^2$ and $b(s) = b_0 s$, which appears in models of thermal conductivity weakly dependent on temperature.
\end{example}

The sign condition \( b(s)s \geq 0 \) in (\textbf{A2}) is physically motivated and ensures the energy-type estimates in Appendix~\ref{A}. The polynomial growth conditions are technical but permit a wide class of nonlinearities.


The analysis of the quadratic gradient nonlinearity $b(u)u_{x}^{2}$ poses significant analytical challenges, particularly in obtaining energy estimates and proving the differentiability of the control-to-state mapping. To address these challenges in a setting where the optimality system can be characterized completely, we restrict our analysis to the one-dimensional spatial case ($d=1$). This restriction is technically crucial, as it allows us to utilize strong Sobolev embedding results (such as $H^{1}(I)\hookrightarrow L^{\infty}(I)$) which are fundamental for controlling the nonlinear terms within the energy estimates and for establishing the regularity required by the differentiability analysis. The extension of this framework to $d=2$ and $d=3$ is a natural and important open problem, which we discuss further in Section~\ref{sec:conclusions}.

The main contributions of this work are: a local well-posedness result for the state equation \eqref{MainModel} under assumptions (\textbf{A1})--(\textbf{A3}), extending the analysis of \cite{fernandez2023local} to the controlled case; a detailed study of the control-to-state mapping \( f \mapsto u(f) \), including differentiability in appropriate function spaces; an existence theorem for the optimal control problem \eqref{OptProblem}; and a characterization of the optimal control via a first-order necessary condition and a coupled forward-backward PDE system.

The paper is organized as follows: Section~\ref{sec:WPofModel} defines the solution concept and establishes well-posedness of the state equation; Section~\ref{sec:ContToStateMapping} analyzes the control-to-state map and its differentiability; Section~\ref{sec:optimal_control_existence} formulates the optimal control problem and proves existence of an optimal control; Section~\ref{sec:characterization} provides a characterization of the optimal control under additional assumptions; and Section~\ref{sec:conclusions} contains concluding remarks and discusses future directions.


Throughout, \( \mathcal{U}_{ad} \) denotes a closed convex subset of \( H^1(0,T; L^2(I)) \). The \( L^2(I) \) inner product and norm are written as \( (\cdot, \cdot) \) and \( \|\cdot\| \), respectively. We denote by $\partial_p Q := \Sigma \cup (\{0\}\times I)$ the parabolic boundary of $Q$. The symbol \( C \) denotes a generic positive constant that may change from line to line but depends only on the problem data (\( a, b, p, q, T, I, \text{etc.} \)); when an estimate's constant depends on additional quantities, this dependence will be made explicit.

\section{Well-posedness of the model} \label{sec:WPofModel}


We begin by introducing the functional framework for our analysis.

\begin{definition}[Solution space] \label{def:solution_space}
The space $W$ is defined as
\begin{equation} \label{defW}
    W := \left\{ u \in L^2\left(0,T; H^2(I) \cap H^1_0(I)\right) : u_t \in L^2\left(0,T; H^1_0(I)\right) \right\},
\end{equation}
equipped with the norm
\begin{equation} \label{normW}
    \|u\|_W := \left( \|u\|_{L^2(0,T;H^2(I))}^2 + \|u_t\|_{L^2(0,T;H^1_0(I))}^2 \right)^{1/2}.
\end{equation}
\end{definition}

\begin{remark} \label{rem:banach_space}
The space $(W, \|\cdot\|_W)$ is a Hilbert space when equipped with the inner product induced by \eqref{normW}. Moreover, the embedding $W \hookrightarrow C([0,T]; H^1_0(I))$ is continuous; in particular, the trace $u(T,\cdot) \in H^1_0(I)$ is well-defined for every $u \in W$.
\end{remark}

We will also need the following companion space, to which we refer for the weak formulation of the linearized systems:
\begin{equation} \label{defH}
    H := L^2\left(0,T;H^1_0(I)\right) \cap H^1\left(0,T;H^{-1}(I)\right),
\end{equation}
endowed with its natural Hilbertian norm. By the Aubin--Lions--Simon Theorem \cite{lions1969quelques,aubin1963analyse,simon1986compact}, the embeddings
\[
W \hookrightarrow H, \qquad H \hookrightarrow C\left([0,T]; L^2(I)\right)
\]
hold; the second embedding is compact.

We now define what we mean by a strong solution to problem \eqref{MainModel}.

\begin{definition}[Strong solution] \label{def:strong_solution}
Given $f \in H^1(0,T; L^2(I))$ and $u_0 \in H^2(I) \cap H^1_0(I)$, a function $u: Q \to \mathbb{R}$ is called a \emph{strong solution} of \eqref{MainModel} if:
\begin{enumerate}
    \item[(a)] $u \in W$;
    \item[(b)] The equation
    \begin{equation} \label{eq_strong_form}
        u_t - (a(u) u_x)_x + b(u) u_x^2 = f
    \end{equation}
    holds in the $L^2(Q)$ sense;
    \item[(c)] The initial condition $u(0,x) = u_0(x)$ is satisfied for almost every $x \in I$.
\end{enumerate}
\end{definition}

\begin{remark}[On the strong-solution space $W$] \label{rem:strong_solution_choice}
The choice $u_t \in L^2(0,T;H^1_0(I))$ in \eqref{defW}---rather than the more familiar $u_t \in L^2(Q)$---is dictated by the quadratic gradient nonlinearity $b(u)u_x^2$. Indeed, the differentiability analysis of Section~\ref{sec:ContToStateMapping} relies on testing the linearized equation with $y_{\epsilon t}$ and integrating by parts in space, which produces a coercive term in $\|y_{\epsilon xt}\|$; closing this estimate requires precisely $u_t, u_{\epsilon t} \in L^2(0,T;H^1_0(I))$, since $\|u_t(t,\cdot)\|_{L^\infty(I)}$ then becomes locally integrable in $t$ by the one-dimensional embedding $H^1_0(I) \hookrightarrow L^\infty(I)$. Working in the weaker space $L^2(Q)$ for $u_t$ would leave $\|u_t\|_{L^\infty(I)}$ undefined a priori and prevent the chain of energy estimates from closing. The same regularity is what propagates through the linearized adjoint analysis of Section~\ref{sec:characterization}; this is also why the control space carries one time-derivative (Remark~\ref{rem:choice_control_space}).
\end{remark}


The main result of this section establishes the well-posedness under small data of problem \eqref{MainModel}.

\begin{theorem}[Well-posedness under small data] \label{thm:well_posedness}
Assume that hypotheses (\textbf{A1})--(\textbf{A3}) are satisfied. There exists $\delta_0 > 0$, depending only on $a_0, M, T, |I|, p, q$, such that, if $\delta < \delta_0$ in (\textbf{A3}), then the system \eqref{MainModel} admits a unique strong solution $u \in W$, and this solution satisfies the estimate
\begin{equation} \label{eq:energy_estimate}
    \|u\|_W \leq C\left( \|u_0\|_{H^2(I)} + \|f\|_{L^2(Q)} + \|f_t\|_{L^2(Q)} \right),
\end{equation}
with a constant $C > 0$ depending only on $a_0$, $M$, $T$, $|I|$, $p$, $q$. An explicit (although non-optimal) admissible threshold $\delta_0$ can be read off the continuation argument in Appendix~\ref{A}; see in particular the smallness condition imposed on $\|f\|^2_{L^2(Q)} + \|f_t\|^2_{L^2(Q)}$ there.
\end{theorem}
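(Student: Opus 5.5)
The plan is a Faedo--Galerkin approximation combined with a priori energy estimates at three levels, closed by a continuation (bootstrap) argument that uses the smallness of $\delta$. Uniqueness then comes from a separate energy estimate on the difference of two solutions. Let $\{w_j\}$ be the Dirichlet eigenfunctions of $-\partial_{xx}$ on $I$. We seek $u^m(t)=\sum_{j\le m} c_{jm}(t)w_j$ solving the projected equation, with $u^m(0)$ equal to the projection of $u_0$, which converges in $H^2(I)$. Local existence of the resulting ODE system is standard, since $a,b\in C^2$. It then suffices to derive bounds on $u^m$ in $W$ that are uniform in $m$ and on $[0,T]$.

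\emph{Estimate I.} Test with $u^m$. The term $\int b(u)u_x^2 u\,dx\ge 0$ by the sign condition in (\textbf{A2}), and $a\ge a_0$. This gives a bound on $\|u^m\|_{L^\infty(0,T;L^2)}+\|u^m_x\|_{L^2(Q)}$ by $C(\|u_0\|+\|f\|_{L^2(Q)})$.

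\emph{Estimate II.} Test with $-u^m_{xx}$, or equivalently with $(a(u)u_x)_x$. This controls $\|u_x\|_{L^\infty(0,T;L^2)}$ and $\|u_{xx}\|_{L^2(Q)}$. In one dimension the nonlinear terms $a'(u)u_x^2u_{xx}$ and $b(u)u_x^2u_{xx}$ are handled with $\|u\|_{L^\infty}\le C\|u_x\|$ and $\|u_x\|_{L^\infty}\le C\|u_x\|^{1/2}\|u_{xx}\|^{1/2}$. After Young's inequality this yields a differential inequality of the form
\[
\frac{d}{dt}\|u_x\|^2+a_0\|u_{xx}\|^2\le C\bigl(1+\|u_x\|^{r}\bigr)\|u_x\|^{2}\|u_{xx}\|^2\cdots+C\|f\|^2
\]
with some power $r$ depending on $p,q$.

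\emph{Estimate III.} Differentiate the Galerkin system in $t$ and test with $u^m_t$. This is legitimate because $f_t\in L^2(Q)$. Since $u_t(0)=f(0)+(a(u_0)u_{0x})_x-b(u_0)u_{0x}^2$, its $L^2$ norm is bounded by $C(\delta)$ through $\|u_0\|_{H^2}$ and $\|f(0)\|\le C(\|f\|_{L^2(Q)}+\|f_t\|_{L^2(Q)})$. Testing gives control of $\|u_t\|_{L^\infty(0,T;L^2)}$ and $\|u_{tx}\|_{L^2(Q)}$. The troublesome terms are $(a'(u)u_tu_x)_x$ and $b'(u)u_tu_x^2+2b(u)u_xu_{xt}$. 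After integration by parts they are bounded by $C(\|u\|_{H^1},\|u_x\|_{L^\infty})\,\|u_t\|\,\|u_{tx}\|$ plus similar terms. These are absorbed into $a_0\|u_{tx}\|^2$ provided $\|u_x\|_{L^\infty}$ stays small, and Gronwall handles the rest. Finally, the $L^2(0,T;H^2)$ bound follows from the equation itself:
\[
a(u)u_{xx}=u_t+a'(u)u_x^2+b(u)u_x^2-f .
\]

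\emph{Main obstacle: closing the bootstrap.} Set $E(t)=\|u_x(t)\|^2+\|u_t(t)\|^2$. The estimates combine to
\[
E(t)+\int_0^t D\le C\delta^2+C\,\Phi\Bigl(\sup_{[0,t]}E\Bigr)\int_0^t D,
\]
where $D=\|u_{xx}\|^2+\|u_{tx}\|^2$ and $\Phi$ is a polynomial with $\Phi(0)=0$ that is superlinear in the data. The continuation argument then runs as follows. Let $t^*$ be the maximal time for which $\sup E\le 2C\delta^2$. On $[0,t^*]$ we have $C\Phi(2C\delta^2)\le\tfrac12$ once $\delta<\delta_0$, so the dissipative term absorbs the nonlinear one and $E\le C\delta^2<2C\delta^2$. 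By continuity this forces $t^*=T$. The delicate part is arranging every nonlinear term, especially the quadratic gradient ones in Estimate III, to carry a factor that is small in $E$ times a dissipation quantity. Whatever cannot be written that way must be linear in $E$ so that Gronwall applies. This is where the one-dimensional embeddings are essential. The uniform bounds then let us pass to the limit: weak convergence in $W$ together with Aubin--Lions strong convergence in $L^2(0,T;H^1)$ and in $C([0,T];L^2)$ identifies $a(u^m)u^m_x$ and $b(u^m)(u^m_x)^2$ in the limit, and weak lower semicontinuity yields \eqref{eq:energy_estimate}.

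\emph{Uniqueness.} Let $w=u_1-u_2$, which satisfies a linear equation with coefficients built from $u_1,u_2\in W$. Test with $w$, or with $-w_{xx}$ if needed. The differences $b(u_1)u_{1x}^2-b(u_2)u_{2x}^2$ and $(a(u_1)-a(u_2))u_{2x}$ are bounded using $\|u_{ix}\|_{L^\infty}\in L^2(0,T)$, so that $\frac{d}{dt}\|w\|^2\le g(t)\|w\|^2$ with $g\in L^1(0,T)$. Gronwall gives $w=0$.
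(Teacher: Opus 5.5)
Your proposal is correct and follows essentially the paper's argument in Appendix~A. The ingredients match: a Galerkin scheme on Dirichlet eigenfunctions, the sign condition for the basic estimate, a $-u_{xx}$ test, and the time-differentiated equation tested with $u_t$. These are closed by a continuation argument that keeps the solution in a small ball, followed by Aubin--Lions to pass to the limit and Gronwall on the difference for uniqueness. You merely reorder the estimates, bootstrap on $\|u_x\|^2+\|u_t\|^2$ instead of $\|u_x\|$ alone, and make explicit the bound on $\|u_t(0)\|$ through $f(0)$ and $\|u_0\|_{H^2(I)}$, which the paper leaves implicit.

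One small fix in your Estimate III: $\|u_x\|_{L^\infty}$ is not small pointwise in time, since it is only $L^2$ in $t$. Either take $\|u_x\|$ as the small factor, using $\|u_t\|_{L^\infty}\le C\|u_{tx}\|$ as the paper does. Or apply Young's inequality and then Gronwall with coefficient $\|u_x\|_{L^\infty}^2\le C\|u_{xx}\|^2\in L^1(0,T)$, which is exactly what your bootstrap paragraph already allows.
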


\begin{proof}
The proof employs the Galerkin method combined with energy estimates. We outline the main strategy below, with complete details provided in Appendix~\ref{A}.

We construct finite-dimensional approximations using the eigenfunctions $\{\psi_j\}_{j=1}^\infty$ of the Dirichlet Laplacian on $I$. For each $n \in \mathbb{N}$, we seek approximate solutions of the form
\[
u_n(t,x) = \sum_{j=1}^n g_{j,n}(t) \psi_j(x)
\]
that satisfy a projected version of the original equation on the finite-dimensional subspace $V_n = \mathrm{span}\{\psi_1, \dots, \psi_n\}$.

The core of the proof lies in deriving uniform estimates independent of the approximation parameter $n$. We establish these through a sequence of energy estimates: first by testing with $u_n$ itself to obtain basic $L^2(I)$ bounds; then with $u_{n,t}$ (combined with a time-differentiated version of the equation) to control time derivatives; and finally with $-u_{n,xx}$ to gain spatial regularity. Crucially, the smallness of the data prevents finite-time loss of regularity, via a continuation argument that traps $\|u_{n,x}(t,\cdot)\|$ inside a small ball for all $t \in [0,T]$. These estimates yield the uniform bounds
\begin{align*}
    \|u_n\|_{L^\infty(0,T;H^1_0(I))} &\leq C, \\
    \|u_{n,t}\|_{L^2(0,T;H^1_0(I))} + \|u_n\|_{L^2(0,T;H^2(I))} &\leq C.
\end{align*}

With these uniform bounds in hand, the Aubin--Lions compactness lemma yields a subsequence $\{u_{n_k}\}$ that converges weakly in $W$ and strongly in $L^2(0,T;H^1(I))$ to a limit function $u$. The strong convergence in $L^2(0,T;H^1)$, combined with the $L^\infty(Q)$ bound on $u_n$ (via the embedding $W \hookrightarrow L^\infty(Q)$ in 1D) and the continuity of $a$, $b$, suffices to pass to the limit in the nonlinear terms $a(u_n)u_{n,x}$ and $b(u_n)u_{n,x}^2$. The regularity $u \in W$ ensures that $u$ is a strong solution.

Uniqueness is proved by considering the difference $w = u - v$ of two potential solutions. A careful energy estimate for $w$, combined with Gronwall's inequality, shows that $w$ vanishes identically.
\end{proof}


For the optimal control analysis in subsequent sections, we require additional smallness conditions on the admissible controls and initial data.

\begin{enumerate}
\item[(\textbf{A4})] [Control constraints]
The set of admissible controls $\mathcal{U}_{ad}$ satisfies, for some $\alpha \in (0,1)$,
\[
\mathcal{U}_{ad} \subseteq
\left\{\, f \in H^1\!\left(0,T;L^2(I)\right) \ \middle|\ 
\begin{aligned}
\|f(t,\cdot)\|_{L^\infty(I)} &\leq \frac{\alpha\,\delta}{2\sqrt{T}},\\[6pt]
\|f_t(t,\cdot)\|_{L^\infty(I)} &\leq \frac{(1-\alpha)\,\delta}{2\sqrt{T}}
\end{aligned}
\,\right\},
\]

where $\delta > 0$ is the constant from assumption (\textbf{A3}).

\item[(\textbf{A5})] [Initial data smallness]
The initial data satisfies
\[
\|u_0\|_{H^2(I)} \leq \frac{\delta}{2}.
\]
\end{enumerate}

\begin{remark}[Consistency of the smallness conditions] \label{rem:implication_assumptions}
Assumptions (\textbf{A4}) and (\textbf{A5}) imply
\[
\|f\|_{L^2(Q)}^2 \leq T  \cdot \frac{\alpha^2 \delta^2}{4 T } = \frac{\alpha^2 \delta^2}{4},
\qquad
\|f_t\|_{L^2(Q)}^2 \leq \frac{(1-\alpha)^2 \delta^2}{4},
\]
so that, together with $\|u_0\|_{H^2(I)}^2 \leq \delta^2/4$ from (\textbf{A5}),
\begin{equation} \label{eq:smallness_consequence}
\|u_0\|_{H^2(I)}^2 + \|f\|_{L^2(Q)}^2 + \|f_t\|_{L^2(Q)}^2 \leq \frac{\delta^2}{4}\big(1 + \alpha^2 + (1-\alpha)^2\big) \leq \delta^2,
\end{equation}
which is exactly the (\textbf{A3})-bound. Provided the parameter $\delta > 0$ is chosen so that $\delta < \delta_0$, the bound \eqref{eq:smallness_consequence} guarantees the applicability of Theorem~\ref{thm:well_posedness} for every $f \in \mathcal{U}_{ad}$. In particular, the control-to-state mapping $f \mapsto u(f)$ is well-defined on $\mathcal{U}_{ad}$, and Theorem~\ref{thm:well_posedness} provides the uniform bound $\|u(f)\|_W \leq C\delta$ for all $f \in \mathcal{U}_{ad}$, where $C$ depends only on the problem data.
\end{remark}

\begin{remark}[Regularity and embeddings] \label{rem:regularity_embeddings}
For $u \in W$, the following embeddings hold: $u \in C([0,T]; H^1_0(I))$, $u \in L^\infty(Q)$, and $u_x \in L^\infty(0,T; L^2(I)) \cap L^2(0,T; H^1(I))$. In particular, by the one-dimensional Sobolev embedding $H^1(I)\hookrightarrow L^\infty(I)$, we have $u_x \in L^2(0,T;L^\infty(I))$ and $\|u_x(t,\cdot)\|_{L^\infty(I)} \leq C\|u_{xx}(t,\cdot)\|$ for a.e.\ $t \in (0,T)$. These regularity properties will be crucial for the differentiability analysis in Section~\ref{sec:ContToStateMapping}.
\end{remark}

\section{Differentiability of the control-to-state mapping} \label{sec:ContToStateMapping}


In this section, we analyze the differentiability properties of the control-to-state mapping
\[
\mathcal{F}: \mathcal{U}_{ad} \to W, \quad \mathcal{F}(f) = u(f),
\]
where $u(f)$ is the unique strong solution of \eqref{MainModel} guaranteed by Theorem~\ref{thm:well_posedness}. Establishing the G\^ateaux differentiability of $\mathcal{F}$ is essential for deriving first-order necessary optimality conditions and developing numerical optimization algorithms.


We begin by studying the linearized system associated with the state equation. The formal linearization of $u_t - (a(u)u_x)_x + b(u)u_x^2 = f$ around $u$ in the direction $y$ produces
\begin{align*}
&y_t - \big(a(u)y_x + a'(u)u_x y\big)_x + 2b(u)u_x y_x + b'(u)u_x^2 y \\
&\qquad = y_t - (a(u)y)_{xx} + 2b(u)u_x y_x + b'(u)u_x^2 y,
\end{align*}
where the second identity uses
\(
(a(u)y)_{xx} = (a'(u)u_x y + a(u)y_x)_x.
\)
We will work with the second (more compact) form.

\begin{lemma}[Well-posedness of the linearized system] \label{lem:linearized_system}
Let $u \in W$ be the solution from Theorem~\ref{thm:well_posedness} corresponding to control $f \in \mathcal{U}_{ad}$, and let $h \in H^1(0,T; L^2(I))$. Then the linearized system
\begin{equation} \label{linearized_system}
    \begin{cases}
        y_t - (a(u)y)_{xx} + 2b(u)u_x y_x + b'(u)u_x^2 y = h, & \text{in } Q, \\
        y = 0, & \text{on } \Sigma, \\
        y(0,x) = 0, & \text{for } x \in I,
    \end{cases}
\end{equation}
admits a unique strong solution $y \in W$ satisfying the estimate
\begin{equation} \label{linearized_estimate}
    \|y\|_W \leq C\|h\|_{H^1(0,T;L^2(I))},
\end{equation}
where $C > 0$ depends on $\|u\|_W$. Moreover, the following sharper estimate holds with $h \in L^2(Q)$ alone (without requiring $h_t \in L^2(Q)$):
\begin{equation} \label{linearized_estimate_L2}
    \|y\|_{L^\infty(0,T;H^1_0(I))} + \|y\|_{L^2(0,T;H^2(I))} + \|y_t\|_{L^2(Q)} \leq C\|h\|_{L^2(Q)}.
\end{equation}
In particular, by the one-dimensional embedding $L^\infty(0,T;H^1_0(I))\hookrightarrow L^\infty(Q)$,
\begin{equation} \label{linearized_estimate_Linf}
    \|y\|_{L^\infty(Q)} \leq C\|h\|_{L^2(Q)}.
\end{equation}
\end{lemma}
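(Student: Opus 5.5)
We prove existence by a Faedo--Galerkin scheme on the Dirichlet eigenfunctions $\{\psi_j\}$, exactly as in the proof of Theorem~\ref{thm:well_posedness}. Since the problem is linear in $y$, no smallness or continuation argument is needed: the a priori estimates close by Gronwall's inequality, with constants depending on $\|u\|_W$. Throughout we use the regularity of the coefficients supplied by Remark~\ref{rem:regularity_embeddings}, together with (\textbf{A1})--(\textbf{A2}) and $u\in L^\infty(Q)$:
\[
a(u),\,b(u),\,b'(u)\in L^\infty(Q),\qquad u_x\in L^\infty(0,T;L^2(I))\cap L^2(0,T;L^\infty(I)),\qquad u_t\in L^2(0,T;L^\infty(I)).
\]
Before estimating, we expand the principal part as
\[
(a(u)y)_{xx}=a(u)y_{xx}+2a'(u)u_xy_x+a''(u)u_x^2y+a'(u)u_{xx}y .
\]

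\emph{Estimate for \eqref{linearized_estimate_L2}.} We test with $-y_{xx}$. In the leading term we use $a\ge a_0$, which gives the coercive contribution $a_0\|y_{xx}\|^2$ and leaves the lower-order terms to be absorbed. The terms are handled as follows.
\begin{itemize}
\item $|(u_xy_x,y_{xx})|\le \|u_x\|_{L^\infty}\|y_x\|\|y_{xx}\|$. After Young's inequality this leaves the coefficient $\|u_x(t)\|_{L^\infty}^2\in L^1(0,T)$ multiplying $\|y_x\|^2$.
\item $|(u_x^2y,y_{xx})|\le \|u_x\|_{L^\infty}\|u_x\|\|y\|_{L^\infty}\|y_{xx}\|$, with $\|y\|_{L^\infty}\le C\|y_x\|$. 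This is treated the same way.
\item $|(u_{xx}y,y_{xx})|\le \|u_{xx}\|\|y\|_{L^\infty}\|y_{xx}\|$. Here the coefficient $\|u_{xx}\|^2$ is in $L^1(0,T)$ because $u\in L^2(0,T;H^2)$.
\end{itemize}
This yields
\[
\frac{d}{dt}\|y_x\|^2+a_0\|y_{xx}\|^2\le C\,\phi(t)\,\|y_x\|^2+C\|h\|^2,\qquad \phi\in L^1(0,T),
\]
with $\|\phi\|_{L^1}$ controlled by $\|u\|_W$. Gronwall then gives the $L^\infty(H^1_0)\cap L^2(H^2)$ bound. Reading $y_t$ off the equation and using the same term-by-term bounds gives the bound for $\|y_t\|_{L^2(Q)}$. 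The estimate \eqref{linearized_estimate_Linf} follows immediately by the embedding.

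\emph{Estimate for \eqref{linearized_estimate}.} To obtain $y_t\in L^2(0,T;H^1_0)$, we differentiate the Galerkin equations in time and set $z=y_t$. Then $z(0)=h(0)\in L^2$, because $y(0)=0$, and the $L^2$ norm of $h(0)$ is controlled by $\|h\|_{H^1(0,T;L^2)}$. The function $z$ solves
\[
z_t-(a(u)z)_{xx}+2b(u)u_xz_x+b'(u)u_x^2z=h_t+(a'(u)u_ty)_{xx}-\big(2b(u)u_xy_x+b'(u)u_x^2y\big)_t .
\]
We test with $z$ and integrate by parts. The principal term gives $a_0\|z_x\|^2$ plus a term $(a'(u)u_xz,z_x)$, which is absorbed by Gronwall. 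The commutator term $(a'(u)u_ty)_{xx}$ is integrated by parts once and paired with $z_x$. It contains $u_{tx}\,y$ and $u_t\,y_x$, which are controlled by
\[
\|u_{tx}\|\,\|y\|_{L^\infty}\qquad\text{and}\qquad \|u_t\|_{L^\infty}\|y_x\| .
\]
Both are square-integrable in time by \eqref{linearized_estimate_L2} and $u_t\in L^2(H^1_0)$. The time derivative of the gradient terms produces $u_{xt}$, $u_t$ and $z_x$. We integrate by parts where necessary, or use $\|u_{xt}\|\,\|u_x\|_{L^\infty}\|y\|_{L^\infty}$. Whenever $z_x$ appears, it is absorbed against $a_0\|z_x\|^2$.

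\emph{Main obstacle.} The hardest point is the term $b(u)u_x z_x z$ together with the $u_{xt}$ contributions. These contributions require exactly $u_t\in L^2(0,T;H^1_0)$, as noted in Remark~\ref{rem:strong_solution_choice}, and the one-dimensional embedding. With them, Gronwall yields
\[
\|z\|_{L^\infty(L^2)}+\|z\|_{L^2(H^1_0)}\le C\|h\|_{H^1(L^2)}.
\]

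\emph{Passage to the limit and uniqueness.} The uniform bounds give weak limits in $W$. Since the equation is linear with fixed coefficients, we pass to the limit directly, and weak lower semicontinuity preserves the estimates. Uniqueness follows by applying \eqref{linearized_estimate_L2} to the difference of two solutions, which solves the problem with $h=0$.
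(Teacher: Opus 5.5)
Your proposal is correct and follows essentially the paper's route (Appendix~\ref{B}): a Galerkin scheme on the Dirichlet eigenfunctions, then testing with $-y_{xx}$ for the bound that needs only $h\in L^2(Q)$, then differentiating in time and testing with $y_t$ for the full $W$-bound. Your remaining variations are harmless, and slightly cleaner than the paper: you note $y_{n,t}(0)=\pi_n h(0)$, you read $\|y_t\|_{L^2(Q)}$ off the equation, and you skip the preliminary $L^2$/weak-solution step. One small slip: in the time-differentiated equation the right-hand side should contain only the terms where $\partial_t$ falls on the coefficients, namely $-2\big(b(u)u_x\big)_t y_x-\big(b'(u)u_x^2\big)_t y$, because $2b(u)u_xz_x+b'(u)u_x^2z$ already appear on the left.
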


\begin{proof}
The proof employs energy estimates and compactness arguments. We provide the main ideas here and refer to Appendix~\ref{B} for complete details.

We work with the weak formulation in the space $H$ defined in \eqref{defH} of Section~\ref{sec:WPofModel}, namely we seek $y \in H$ with $y|_{\partial_p Q} = 0$ such that, for all $\phi \in H$,
\begin{align*}
    \int_0^T \langle y_t, \phi \rangle \, dt &+ \int_Q \left[a(u)y_x + a'(u)u_x y\right] \phi_x \, d(t,x) \\
    &+ \int_Q \left[2b(u)u_x y_x + b'(u)u_x^2 y\right] \phi \, d(t,x) = \int_Q h\phi \, d(t,x).
\end{align*}

The existence of a solution $y \in H$ follows from the Galerkin method (see Appendix~\ref{B}). The improved regularity $y \in W$ is established through a sequence of higher-order energy estimates: testing with $y$ and $-y_{xx}$ yields the bound \eqref{linearized_estimate_L2} (which only requires $h \in L^2(Q)$), while testing the time-differentiated equation with $y_t$ produces the additional bound on $y_t$ in $L^2(0,T;H^1_0(I))$ that requires $h_t \in L^2(Q)$. The coefficients $a(u)$, $b(u)$, and $u_x$ inherit sufficient regularity from $u \in W$ to close all these estimates. Uniqueness follows via Gronwall's inequality applied to the difference of two solutions.
\end{proof}

\begin{remark}
The coefficients in \eqref{linearized_system} depend on $u$ and $u_x$. Although $u_x$ is only known a priori to belong to $L^\infty(0,T;L^2(I)) \cap L^2(0,T;L^\infty(I))$, the one-dimensional structure of the problem and the regularity $u \in W$ are enough to close the energy estimates and establish well-posedness.
\end{remark}


The main result of this section establishes the G\^ateaux differentiability of the control-to-state mapping.

\begin{theorem}[G\^ateaux differentiability] \label{thm:gateaux_differentiability}
The control-to-state mapping $\mathcal{F}: \mathcal{U}_{ad} \to W$ is G\^ateaux differentiable. Specifically, for any $f, g \in \mathcal{U}_{ad}$, the directional derivative
\[
\mathcal{F}'(f)(g - f) := \lim_{\epsilon \to 0^+} \frac{u(f + \epsilon(g - f)) - u(f)}{\epsilon}
\]
exists in $W$ and coincides with the solution $y \in W$ of the linearized system \eqref{linearized_system} with $h = g - f$. Moreover,
\[
\|\mathcal{F}'(f)(g - f)\|_W \leq C\|g - f\|_{H^1(0,T;L^2(I))},
\]
and, by the sharper bound \eqref{linearized_estimate_L2}--\eqref{linearized_estimate_Linf},
\begin{equation} \label{Fprime_L2_bound}
\begin{aligned}
&\|\mathcal{F}'(f)(g-f)\|_{L^\infty(Q)} + \|\mathcal{F}'(f)(g-f)\|_{L^\infty(0,T;H^1_0(I))} \\
&\quad + \|\mathcal{F}'(f)(g-f)\|_{L^2(0,T;H^2(I))} + \|(\mathcal{F}'(f)(g-f))_t\|_{L^2(Q)} \leq C\|g-f\|_{L^2(Q)}.
\end{aligned}
\end{equation}
\end{theorem}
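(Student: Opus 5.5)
Fix $f,g\in\mathcal U_{ad}$, set $h=g-f$ and $f_\epsilon=f+\epsilon h$ for $\epsilon\in(0,1]$. By convexity $f_\epsilon\in\mathcal U_{ad}$, so Theorem~\ref{thm:well_posedness} (via Remark~\ref{rem:implication_assumptions}) gives states $u_\epsilon:=u(f_\epsilon)$ with $\|u_\epsilon\|_W\le C\delta$ uniformly in $\epsilon$. Let $y$ be the solution of \eqref{linearized_system} given by Lemma~\ref{lem:linearized_system}. The plan is to show that the remainder $z_\epsilon:=(u_\epsilon-u)/\epsilon-y$ satisfies $\|z_\epsilon\|_W\to0$. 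The bound on $\|\mathcal F'(f)(g-f)\|_W$ and \eqref{Fprime_L2_bound} then follow directly from \eqref{linearized_estimate}, \eqref{linearized_estimate_L2} and \eqref{linearized_estimate_Linf}, because the limit equals $y$.

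\textbf{Step 1 (Lipschitz bound for the difference quotient).} Let $w_\epsilon=(u_\epsilon-u)/\epsilon$. Subtracting the two state equations and dividing by $\epsilon$ gives
\[
w_{\epsilon t}-(A_\epsilon w_\epsilon)_{xx}+B_\epsilon (u_{\epsilon x}+u_x)w_{\epsilon x}+D_\epsilon u_x^2 w_\epsilon=h .
\]
The coefficients are $A_\epsilon=\int_0^1 a(u+s(u_\epsilon-u))\,ds$ (with $(a(u_\epsilon)-a(u))/\epsilon\,u_x$ rewritten in primitive form), $B_\epsilon=b(u_\epsilon)$, and $D_\epsilon=\int_0^1 b'(u+s(u_\epsilon-u))\,ds$. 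This is a linear equation of the same type as \eqref{linearized_system}, and its coefficients are bounded in terms of $\|u\|_W,\|u_\epsilon\|_W\le C\delta$. Repeating the energy estimates of Lemma~\ref{lem:linearized_system}, I obtain $\|w_\epsilon\|_W\le C\|h\|_{H^1(0,T;L^2)}$ uniformly in $\epsilon$. The estimates are testing with $w_\epsilon$ and $-w_{\epsilon xx}$, and testing the time-differentiated equation with $w_{\epsilon t}$ (Estimate IV mentioned in Remark~\ref{rem:choice_control_space}). In particular $\|u_\epsilon-u\|_W\le C\epsilon$, so $u_\epsilon\to u$ in $W$, and hence in $L^\infty(Q)$ and in $L^2(0,T;W^{1,\infty})$.

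\textbf{Step 2 (equation for the remainder).} Subtracting \eqref{linearized_system} shows that $z_\epsilon$ solves the linearized equation around $u$ with zero data and right-hand side $R_\epsilon$. Here $R_\epsilon$ collects terms like $((a(u)-A_\epsilon)w_\epsilon)_{xx}$, $(2b(u)u_x-B_\epsilon(u_{\epsilon x}+u_x))w_{\epsilon x}$ and $(b'(u)-D_\epsilon)u_x^2w_\epsilon$. Each term is a product of a coefficient difference that is $O(\epsilon)$ in a suitable norm (by Step 1 and the $C^2$ bounds in (\textbf{A1})--(\textbf{A2})) with a factor $w_\epsilon$ that is uniformly bounded in $W$. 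To avoid putting $R_\epsilon$ into $H^1(0,T;L^2)$ with second derivatives, I instead rerun the energy estimates directly on the $z_\epsilon$ equation, keeping the $(\cdot)_{xx}$ part on the left after integration by parts. I then estimate the error terms using $\|u_\epsilon-u\|_{L^\infty(Q)}$, $\|u_{\epsilon x}-u_x\|_{L^2(L^\infty)}$ and $\|u_{\epsilon t}-u_t\|_{L^2(H^1_0)}$, all of which are $O(\epsilon)$. Gronwall then gives $\|z_\epsilon\|_W\le C\epsilon$, which proves the limit exists in $W$ and equals $y$.

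\textbf{Main obstacle.} The main obstacle is the $W$-level estimate, in particular the time-differentiated equation tested with $z_{\epsilon t}$. Differentiating the remainder in $t$ produces terms such as $a''(\cdot)u_t(u_\epsilon-u)w_{\epsilon xx}$ and $b'(u_\epsilon)u_{\epsilon t}u_{\epsilon x}w_{\epsilon x}$. These must be controlled using only $u_t\in L^2(H^1_0)\subset L^2(L^\infty)$ and $w_{\epsilon xx}\in L^2(Q)$, with the $L^\infty$-in-time factors coming from the embedding $W\hookrightarrow C([0,T];H^1_0)$. This is exactly where the one-dimensional embeddings and the smallness $C\delta$ are needed to absorb terms into the coercive part $a_0\|z_{\epsilon xt}\|^2$. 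If the full $W$-convergence proves too delicate, a fallback is to obtain $z_\epsilon\to0$ in the weaker norm of \eqref{linearized_estimate_L2} and then upgrade. Uniform $W$-bounds give $z_\epsilon\rightharpoonup0$ weakly in $W$, and strong convergence would follow from convergence of the norms via the energy identity.
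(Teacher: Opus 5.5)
Your proposal is correct, but it proves convergence of the difference quotients by a different route than the paper. Your Step 1 matches the paper's Claim~1 (Estimates I--IV). The paper keeps the singular quotient $\frac1\epsilon(a(u_\epsilon)-a(u))u_x$ and bounds it with the Mean Value Theorem, whereas you use the primitive/averaged coefficient $A_\epsilon$; the two are equivalent. After Claim~1 the paper uses compactness: Aubin--Lions gives a weak limit in $W$, dominated convergence identifies it as a weak solution of \eqref{linearized_system}, and uniqueness removes the subsequence.

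\emph{What each route buys.} The paper's argument needs only continuity of $a',b'$ to pass to the limit. However, it delivers only weak convergence in $W$, plus strong convergence in $L^2(0,T;H^1_0(I))\cap C([0,T];L^2(I))$, which is exactly what Corollary~\ref{cor:limit_strong_weak} records. It therefore does not literally give convergence in the norm of $W$. Your direct estimate on $z_\epsilon=w_\epsilon-y$ costs a second full pass through all four levels of estimates. In return it gives norm convergence in $W$, which is what the statement asserts, and it needs no compactness or identification step. You correctly spot the key point at the top level: integrate by parts once, so that only $((A_\epsilon-a(u))w_\epsilon)_{xt}\in L^2(Q)$ is needed rather than a third space derivative. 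Note also that $z_{\epsilon t}(0)=0$, because $w_{\epsilon t}(0)=y_t(0)=h(0)$.

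\emph{Two small corrections.} First, (\textbf{A1})--(\textbf{A2}) give only $a,b\in C^2$. The $H^2$-level remainder contains $a''(u+s\epsilon w_\epsilon)-a''(u)$, and after differentiating in time the $W$-level remainder also contains $b''(u+s\epsilon w_\epsilon)-b''(u)$. These are $o(1)$ in $L^\infty(Q)$, by uniform continuity on the bounded range since $u_\epsilon\to u$ in $L^\infty(Q)$, but they are not $O(\epsilon)$. So your claimed rate $\|z_\epsilon\|_W\le C\epsilon$ requires $a'',b''$ to be locally Lipschitz. In general you get $\|z_\epsilon\|_W\to0$, with the $O(\epsilon)$ rate only in $L^\infty(0,T;L^2(I))\cap L^2(0,T;H^1_0(I))$; this is all the theorem needs. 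Second, the remainder estimates are linear and close by Gronwall with $L^1(0,T)$ coefficients, as in the paper's Estimate~IV. The smallness of $\delta$ plays no role there.
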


\begin{proof}
Let \( 0 < \epsilon \leqslant 1 \) and  \( h = g - f \). Set \( u_\epsilon := u(f + \epsilon h) \) and \( u := u(f) \). Consider the difference quotient
\[
y_\epsilon := \frac{u_\epsilon - u}{\epsilon}.
\]
From the state equation \eqref{MainModel}, the algebraic decomposition
\begin{align*}
b(u_\epsilon)u_{\epsilon x}^2 - b(u)u_x^2 &= \big(b(u_\epsilon)-b(u)\big)u_{\epsilon x}^2 + b(u)\big(u_{\epsilon x}^2 - u_x^2\big)\\
&= \big(b(u_\epsilon)-b(u)\big)u_{\epsilon x}^2 + b(u)(u_{\epsilon x}+u_x)(u_{\epsilon x}-u_x),
\end{align*}
and the analogous identity for the diffusion term, the function \( y_\epsilon \) satisfies the system
\begin{equation} \label{systemYeps}
\begin{cases}
    y_{\epsilon t} - (a(u_\epsilon) y_{\epsilon x})_x + b(u)(u_{\epsilon x} + u_x) y_{\epsilon x} \\
    \quad - \dfrac{1}{\epsilon}\big[(a(u_\epsilon)-a(u)) u_x\big]_x + \dfrac{1}{\epsilon}(b(u_\epsilon)-b(u)) u_{\epsilon x}^2 = h, & \text{in } Q, \\
    y_\epsilon = 0, & \text{on } \Sigma, \\
    y_\epsilon(0,x) = 0, & \text{for } x \in I.
\end{cases}
\end{equation}

\medskip\noindent\textbf{CLAIM 1.} There exists a constant $C>0$, depending only on the data and the model parameters, such that
\[
\|y_\epsilon\|_W^2 \leq C\,\|h\|_{H^1(0,T;L^2(I))}^2 \qquad \text{for every } \epsilon \in (0,1].
\]
Here $C$ depends on $\|f\|_{H^1(0,T;L^2(I))}$, $\|g\|_{H^1(0,T;L^2(I))}$, $\|u_0\|_{H^2(I)}$, and the model parameters.

We prove this claim via a sequence of energy estimates. Throughout, generic constants $C > 0$ may depend on the indicated quantities. We will frequently use the following bounds, which follow from the well-posedness theory and the Sobolev embedding $H^1(I) \hookrightarrow L^\infty(I)$:
\begin{align}
    \|u\|_{L^\infty(Q)} &\leqslant C \left(\|f\|_{H^1(0,T;L^2(I))} + \|u_0\|_{H^2(I)}\right), \label{appDtrivial_1} \\
    \|u_\epsilon\|_{L^\infty(Q)} &\leqslant C\left(\|f\|_{H^1(0,T;L^2(I))} + \|g\|_{H^1(0,T;L^2(I))} + \|u_0\|_{H^2(I)} \right). \label{appDtrivial_2}
\end{align}

Some immediate consequences of these bounds and (\textbf{A1})--(\textbf{A2}) are the following uniform pointwise estimates. There exists a constant $C > 0$, depending only on the data and on a uniform bound for $\|f\|_{H^1(0,T;L^2(I))}$, $\|g\|_{H^1(0,T;L^2(I))}$, $\|u_0\|_{H^2(I)}$, such that:
\begin{itemize}
    \item[(i)] \(\max\{|a(u)|,\,|a(u_\epsilon)|\} \leqslant C\), \(\max\{|b(u)|,\,|b(u_\epsilon)|\} \leqslant C\) (where the right-hand side is the new $C$, obtained from $C(1+\|u\|_{L^\infty(Q)}^p)$ and the analogous bound for $u_\epsilon$, both of which are finite by \eqref{appDtrivial_1}--\eqref{appDtrivial_2}).
    \item[(ii)] For all \( 0 \leqslant s \leqslant 1 \), defining \( u^{s,\epsilon} := s(u_\epsilon - u) + u \), one has $u^{s,\epsilon} \in L^\infty(Q)$ with $\|u^{s,\epsilon}\|_{L^\infty(Q)} \leq C$, and therefore
    \[
        \max\big\{|a'(u^{s,\epsilon})|,\, |a''(u^{s,\epsilon})|,\, |b'(u^{s,\epsilon})|,\, |b''(u^{s,\epsilon})|\big\} \leqslant C.
    \]
\end{itemize}
With a slight abuse, throughout the rest of this proof we use $C$ to denote any positive constant of this kind, possibly changing from line to line.

We will also use repeatedly the one-dimensional Sobolev embedding $H^1_0(I) \hookrightarrow L^\infty(I)$ in the form
\begin{align*}
\|v\|_{L^\infty(I)} &\leq C\|v_x\| && \forall\, v \in H^1_0(I),\\
\|v_x\|_{L^\infty(I)} &\leq C\|v_{xx}\| && \forall\, v \in H^2(I)\cap H^1_0(I).
\end{align*}
The second estimate follows from $v_x \in H^1(I)$ together with Poincaré's inequality. We will also need the following \emph{sharper} one-dimensional Gagliardo--Nirenberg interpolation, valid for $v \in H^2(I)\cap H^1_0(I)$:
\begin{equation} \label{GN_sharp}
\|v_x\|_{L^\infty(I)}^2 \;\leq\; C\,\|v_{xx}\|\,\|v_x\|,
\end{equation}
which we will combine below with the uniform-in-$t$ bound $\|u_x(t,\cdot)\| + \|u_{\epsilon,x}(t,\cdot)\| \leq C$ (provided by the embedding $W \hookrightarrow C([0,T]; H^1_0(I))$ together with the bounds \eqref{appDtrivial_1}--\eqref{appDtrivial_2}) to obtain
\begin{equation} \label{GN_sharp_uniform}
\|u_x(t,\cdot)\|^2_{L^\infty(I)} \leq C\,\|u_{xx}(t,\cdot)\|,\qquad
\|u_{\epsilon,x}(t,\cdot)\|^2_{L^\infty(I)} \leq C\,\|u_{\epsilon,xx}(t,\cdot)\|.
\end{equation}
This refined bound is essential to keep the time-coefficients in our Gronwall arguments in $L^1(0,T)$.

\begin{center}
    \textbf{ESTIMATE I}
\end{center}

Using \( y_\epsilon \) as a test function in \eqref{systemYeps}, integrating over \( I \), and performing integration by parts, we obtain
\begin{equation} \label{FirstEstimateYeps}
\begin{aligned}
\frac{1}{2} \frac{d}{dt} \|y_\epsilon\|^2 &+ a_0 \|y_{\epsilon x}\|^2 \leqslant I_1 + I_2 + I_3 + \int_I |h| |y_\epsilon| \, dx \\
&\leqslant I_1 + I_2 + I_3 + \frac{1}{2} \|y_\epsilon\|^2 + \frac{1}{2} \|h\|^2,
\end{aligned}
\end{equation}
where
\begin{align*}
I_1 &:= \int_I |b(u)| |u_{\epsilon x} + u_x| |y_{\epsilon x}| |y_\epsilon| \, dx, \\
I_2 &:= \int_I \frac{|a(u_\epsilon) - a(u)|}{\epsilon} |u_x| |y_{\epsilon x}| \, dx, \\
I_3 &:= \int_I \frac{|b(u_\epsilon) - b(u)|}{\epsilon} u_{\epsilon x}^2 |y_\epsilon| \, dx.
\end{align*}

By the Mean Value Theorem and (ii) above, we have
\begin{equation} \label{MVTbounds}
\frac{|a(u_\epsilon)-a(u)|}{\epsilon} \leq C|y_\epsilon|, \qquad \frac{|b(u_\epsilon)-b(u)|}{\epsilon} \leq C|y_\epsilon|.
\end{equation}

Using \eqref{MVTbounds} and the embeddings, we estimate each term. First,
\begin{align}
|I_1| &\leqslant C \|u_{\epsilon x} + u_x\|_{L^\infty} \|y_{\epsilon x}\| \|y_\epsilon\| \nonumber \\
      &\leqslant C \big(\|u_{\epsilon xx}\| + \|u_{xx}\|\big) \|y_{\epsilon x}\| \|y_\epsilon\| \nonumber \\
      &\leqslant \epsilon_1 \|y_{\epsilon x}\|^2 + C_{\epsilon_1} \big(\|u_{\epsilon xx}\|^2 + \|u_{xx}\|^2\big) \|y_\epsilon\|^2, \label{I_1EstYeps}
\end{align}
where $\epsilon_1 > 0$ is to be chosen. For $I_2$, using \eqref{MVTbounds} we have
\begin{align}
|I_2| &\leqslant C \int_I |y_\epsilon||u_x||y_{\epsilon x}|\, dx \leqslant C \|u_x\|_{L^\infty} \|y_{\epsilon x}\| \|y_\epsilon\| \nonumber \\
      &\leqslant C \|u_{xx}\| \|y_{\epsilon x}\| \|y_\epsilon\| \nonumber \\
      &\leqslant \epsilon_1 \|y_{\epsilon x}\|^2 + C_{\epsilon_1} \|u_{xx}\|^2 \|y_\epsilon\|^2. \label{I_2EstYeps}
\end{align}
For $I_3$, using \eqref{MVTbounds},

\begin{align}
|I_3| &\leqslant C \int_I u_{\epsilon x}^2\, |y_\epsilon|^2\, dx
       \leqslant C\,\|y_\epsilon\|_{L^\infty}^2\, \|u_{\epsilon x}\|^2 \nonumber \\
      &\leqslant C\,\|u_{\epsilon x}\|^2\, \|y_\epsilon\|\, \|y_{\epsilon x}\|
       \leqslant \epsilon_1\, \|y_{\epsilon x}\|^2 + C_{\epsilon_1}\, \|y_\epsilon\|^2, \label{I_3EstYeps}
\end{align}

Substituting \eqref{I_1EstYeps}--\eqref{I_3EstYeps} into \eqref{FirstEstimateYeps} and choosing \( \epsilon_1 = a_0/4 \), we derive
\begin{equation} \label{SeqEst_1Yeps}
\frac{d}{dt} \|y_\epsilon\|^2 + a_0 \|y_{\epsilon x}\|^2 \leqslant C\big(1 + \|u_{\epsilon xx}\|^2 + \|u_{xx}\|^2\big) \|y_\epsilon\|^2 + \|h\|^2.
\end{equation}
Since $u, u_\epsilon \in W$, the function $t \mapsto 1 + \|u_{\epsilon xx}(t,\cdot)\|^2 + \|u_{xx}(t,\cdot)\|^2$ is integrable on $[0,T]$. Gronwall's lemma yields
\begin{equation} \label{ResultingEst_1Yeps}
\sup_{t\in[0,T]} \|y_\epsilon\|^2 \leqslant C \|h\|_{L^2(Q)}^2,
\end{equation}
and integrating \eqref{SeqEst_1Yeps} in time gives
\begin{equation} \label{FinalEstimateIYeps}
\|y_{\epsilon x}\|_{L^2(Q)}^2 \leqslant C \|h\|_{L^2(Q)}^2.
\end{equation}

\begin{center}
    \textbf{ESTIMATE II}
\end{center}

Now use \( -y_{\epsilon xx} \) as a test function in \eqref{systemYeps}. Recall that the term $b(u)(u_{\epsilon x}+u_x)y_{\epsilon x}$ appears with $y_{\epsilon x}$ (not $y_\epsilon$). After integration by parts we obtain
\begin{equation} \label{Step_1Est_3eps}
\begin{aligned}
\frac{1}{2} \frac{d}{dt} \|y_{\epsilon x}\|^2 &+ \int_I a(u_\epsilon) |y_{\epsilon xx}|^2 \, dx \\
&= -\int_I a'(u_\epsilon) u_{\epsilon x} y_{\epsilon x} y_{\epsilon xx} \, dx + \int_I b(u)(u_{\epsilon x} + u_x) y_{\epsilon x}\, y_{\epsilon xx} \, dx \\
&\quad -\int_I \frac{1}{\epsilon} \big((a(u_\epsilon) - a(u)) u_x\big)_x y_{\epsilon xx} \, dx \\
&\quad + \int_I \frac{1}{\epsilon} (b(u_\epsilon) - b(u)) u_{\epsilon x}^2 y_{\epsilon xx} \, dx - \int_I h\, y_{\epsilon xx} \, dx.
\end{aligned}
\end{equation}

We estimate each term. Define
\begin{align*}
J_1 &:= \int_I \frac{1}{\epsilon} \left((a(u_\epsilon) - a(u)) u_x\right)_x y_{\epsilon xx} \, dx, \\
J_2 &:= \int_I b(u)(u_{\epsilon x} + u_x) y_{\epsilon x}\, y_{\epsilon xx} \, dx.
\end{align*}
Expanding the derivative in $J_1$ via the chain rule and using \eqref{MVTbounds}:
\begin{align}
|J_1| &\leqslant C \int_I \big( |u_{\epsilon x}| |y_\epsilon| |u_x| + |y_{\epsilon x}| |u_x| + |y_\epsilon| |u_{xx}| \big) |y_{\epsilon xx}| \, dx \nonumber \\
      &\leqslant C \big( \|u_{\epsilon x}\|_{L^\infty} \|u_x\|_{L^\infty} \|y_\epsilon\| + \|y_{\epsilon x}\| \|u_x\|_{L^\infty} + \|y_\epsilon\|_{L^\infty} \|u_{xx}\| \big) \|y_{\epsilon xx}\| \nonumber \\
      &\leqslant C \big( \|u_{\epsilon xx}\|^{1/2} \|u_{xx}\|^{1/2} \|y_\epsilon\| + \|y_{\epsilon x}\| \|u_{xx}\| + \|y_{\epsilon x}\| \|u_{xx}\| \big) \|y_{\epsilon xx}\| \nonumber \\
      &\leqslant \epsilon_1 \|y_{\epsilon xx}\|^2 + C_{\epsilon_1} \big(\|u_{xx}\|^2 + \|u_{\epsilon xx}\|\,\|u_{xx}\|\big) \big(\|y_\epsilon\|^2 + \|y_{\epsilon x}\|^2\big), \label{K_1eps}
\end{align}
where the third inequality uses the bound \eqref{GN_sharp_uniform} for the product $\|u_{\epsilon x}\|_{L^\infty} \|u_x\|_{L^\infty}$, and the embedding $\|y_\epsilon\|_{L^\infty} \leq C\|y_{\epsilon x}\|$ for the third term. Note that, by AM-GM, $\|u_{\epsilon xx}\|\,\|u_{xx}\| \leq \tfrac{1}{2}(\|u_{\epsilon xx}\|^2 + \|u_{xx}\|^2)$, so the Gronwall coefficient in \eqref{K_1eps} is dominated by $C(\|u_{xx}\|^2 + \|u_{\epsilon xx}\|^2) \in L^1(0,T)$ since $u, u_\epsilon \in W$.
For $J_2$ (corrected with $y_{\epsilon x}$ instead of $y_\epsilon$):
\begin{align}
|J_2| &\leqslant C \big(\|u_{\epsilon x}\|_{L^\infty} + \|u_x\|_{L^\infty}\big) \|y_{\epsilon x}\| \|y_{\epsilon xx}\| \nonumber \\
      &\leqslant C \big(\|u_{\epsilon xx}\| + \|u_{xx}\|\big) \|y_{\epsilon x}\| \|y_{\epsilon xx}\| \nonumber \\
      &\leqslant \epsilon_1 \|y_{\epsilon xx}\|^2 + C_{\epsilon_1} \big(\|u_{\epsilon xx}\|^2 + \|u_{xx}\|^2\big) \|y_{\epsilon x}\|^2. \label{K_2eps}
\end{align}

The remaining terms are estimated similarly:
\begin{align}
\left|\int_I a'(u_\epsilon) u_{\epsilon x} y_{\epsilon x} y_{\epsilon xx} \, dx\right| &\leqslant C \|u_{\epsilon x}\|_{L^\infty} \|y_{\epsilon x}\| \|y_{\epsilon xx}\| \nonumber \\
&\leqslant \epsilon_1 \|y_{\epsilon xx}\|^2 + C_{\epsilon_1} \|u_{\epsilon xx}\|^2 \|y_{\epsilon x}\|^2, \label{K_3eps} \\
\left|\int_I \frac{1}{\epsilon} (b(u_\epsilon) - b(u)) u_{\epsilon x}^2 y_{\epsilon xx} \, dx\right| &\leqslant C \|y_\epsilon\|_{L^\infty} \|u_{\epsilon x}\|_{L^\infty}^2 \|y_{\epsilon xx}\| \nonumber \\
&\leqslant C \|y_{\epsilon x}\|\,\|u_{\epsilon xx}\|\,\|y_{\epsilon xx}\| \nonumber \\
&\leqslant \epsilon_1 \|y_{\epsilon xx}\|^2 + C_{\epsilon_1} \|u_{\epsilon xx}\|^2 \|y_{\epsilon x}\|^2, \label{K_4eps}
\end{align}
and the source term $\big|\int h\,y_{\epsilon xx}\,dx\big| \leq \epsilon_1 \|y_{\epsilon xx}\|^2 + C_{\epsilon_1}\|h\|^2$.

Choosing \( \epsilon_1 = a_0/8 \) and combining estimates, we obtain
\begin{equation} \label{ConcludingEst_3eps}
\frac{d}{dt} \|y_{\epsilon x}\|^2 + a_0 \|y_{\epsilon xx}\|^2 \leqslant C\big(1 + \|u_{xx}\|^2 + \|u_{\epsilon xx}\|^2\big) \big(\|y_\epsilon\|^2 + \|y_{\epsilon x}\|^2\big) + \|h\|^2.
\end{equation}
Crucially, the Gronwall coefficient on the right-hand side belongs to $L^1(0,T)$, since $u, u_\epsilon \in W \subset L^2(0,T;H^2(I))$. (Earlier drafts of the calculation produced a $\|u_{\epsilon xx}\|^4$ term in this coefficient through the cruder embedding $\|u_{\epsilon x}\|_{L^\infty}^2 \leq C\|u_{\epsilon xx}\|^2$; the sharper bound \eqref{GN_sharp_uniform} avoids this and keeps the coefficient integrable.) Combining with \eqref{ResultingEst_1Yeps} and applying Gronwall's lemma,
\begin{equation} \label{EstIIIepsPt_1}
\sup_{t\in[0,T]} \|y_{\epsilon x}\|^2 \leqslant C \|h\|_{L^2(Q)}^2,
\end{equation}
\begin{equation} \label{EstIIIepsPt_2}
\|y_{\epsilon xx}\|_{L^2(Q)}^2 \leqslant C \|h\|_{L^2(Q)}^2.
\end{equation}

\begin{center}
    \textbf{ESTIMATE III}
\end{center}

Multiply \eqref{systemYeps} by \( y_{\epsilon t} \) and integrate over \( I \):
\begin{equation} \label{Step_1ForEst_2eps}
\begin{aligned}
\|y_{\epsilon t}\|^2 &+ \int_I a(u_\epsilon) y_{\epsilon x} y_{\epsilon t x} \, dx + \int_I b(u)(u_{\epsilon x} + u_x) y_{\epsilon x} y_{\epsilon t} \, dx \\
&= \int_I \frac{1}{\epsilon} \left[(a(u_\epsilon) - a(u)) u_x\right]_x y_{\epsilon t} \, dx \\
&\quad - \int_I \frac{1}{\epsilon} (b(u_\epsilon) - b(u)) u_{\epsilon x}^2 y_{\epsilon t} \, dx + \int_I h y_{\epsilon t} \, dx.
\end{aligned}
\end{equation}

The second term on the left-hand side satisfies
\begin{equation} \label{LHSinEst_2eps}
\begin{aligned}
\int_I a(u_\epsilon) y_{\epsilon x} y_{\epsilon t x} \, dx &= \frac{1}{2} \frac{d}{dt} \int_I a(u_\epsilon) |y_{\epsilon x}|^2 \, dx - \frac{1}{2} \int_I a'(u_\epsilon) u_{\epsilon t} |y_{\epsilon x}|^2 \, dx \\
&\geqslant \frac{1}{2} \frac{d}{dt} \int_I a(u_\epsilon) |y_{\epsilon x}|^2 \, dx - C \|u_{\epsilon t}\|_{L^\infty} \|y_{\epsilon x}\|^2.
\end{aligned}
\end{equation}
By $u_{\epsilon t} \in L^2(0,T;H^1_0(I))$ and $H^1_0(I)\hookrightarrow L^\infty(I)$, the function $t \mapsto \|u_{\epsilon t}(t,\cdot)\|_{L^\infty}^2$ is integrable, so the negative term in \eqref{LHSinEst_2eps} can be absorbed in Gronwall's inequality.

The nonlinear terms on the right-hand side are estimated as follows:
\begin{align}
&\left|\int_I b(u)(u_{\epsilon x} + u_x) y_{\epsilon x} y_{\epsilon t} \, dx\right| \nonumber\\
&\quad\leqslant C \big(\|u_{\epsilon x}\|_{L^\infty} + \|u_x\|_{L^\infty}\big) \|y_{\epsilon x}\| \|y_{\epsilon t}\| \nonumber \\
&\quad\leqslant \epsilon_1 \|y_{\epsilon t}\|^2 + C_{\epsilon_1} \big(\|u_{\epsilon xx}\|^2 + \|u_{xx}\|^2\big)\|y_{\epsilon x}\|^2, \label{J_2eps} \\[4pt]
&\left|\int_I \frac{1}{\epsilon} \left[(a(u_\epsilon) - a(u)) u_x\right]_x y_{\epsilon t} \, dx\right| \nonumber\\
&\quad\leqslant C \big(\|u_{\epsilon x}\|_{L^\infty} \|y_\epsilon\| \|u_x\|_{L^\infty}
   + \|y_{\epsilon x}\|\|u_x\|_{L^\infty} + \|y_\epsilon\|_{L^\infty}\|u_{xx}\|\big) \|y_{\epsilon t}\| \nonumber \\
&\quad\leqslant C \big(\|u_{\epsilon xx}\|^{1/2}\|u_{xx}\|^{1/2}\|y_\epsilon\|
   + 2\|y_{\epsilon x}\|\|u_{xx}\|\big)\|y_{\epsilon t}\| \nonumber \\
&\quad\leqslant \epsilon_1 \|y_{\epsilon t}\|^2
   + C_{\epsilon_1} \big(\|u_{xx}\|^2 + \|u_{\epsilon xx}\|\,\|u_{xx}\|\big)
   \big(\|y_\epsilon\|^2 + \|y_{\epsilon x}\|^2\big). \label{J_3eps}
\end{align}

Using \eqref{LHSinEst_2eps}--\eqref{J_3eps} in \eqref{Step_1ForEst_2eps} and choosing $\epsilon_1 = 1/8$, we get
\begin{equation} \label{Step_2Est_2eps}
\frac{d}{dt} \int_I a(u_\epsilon) |y_{\epsilon x}|^2 \, dx + \|y_{\epsilon t}\|^2 \leqslant C\,\Lambda_\epsilon(t)\big(\|y_{\epsilon x}\|^2 + \|y_\epsilon\|^2\big) + C\,\|h\|^2,
\end{equation}
where $\Lambda_\epsilon(t) := 1 + \|u_{xx}(t,\cdot)\|^2 + \|u_{\epsilon xx}(t,\cdot)\|^2 + \|u_{\epsilon t}(t,\cdot)\|_{L^\infty(I)}^2$ is integrable on $[0,T]$ (the cross-term $\|u_{\epsilon xx}\|\|u_{xx}\|$ that appears in \eqref{J_3eps} is dominated by $\tfrac{1}{2}(\|u_{\epsilon xx}\|^2 + \|u_{xx}\|^2)$ via AM--GM, hence absorbed into $\Lambda_\epsilon$). Gronwall's lemma and integration in time yield
\begin{align}
\sup_{t\in[0,T]} \|y_{\epsilon x}\|^2 &\leqslant C \|h\|_{L^2(Q)}^2, \label{Conc_1Est_2eps} \\
\|y_{\epsilon t}\|_{L^2(Q)}^2 &\leqslant C \|h\|_{L^2(Q)}^2. \label{Conc_2Est_2eps}
\end{align}

\begin{center}
    \textbf{ESTIMATE IV}
\end{center}
As a final step, we differentiate \eqref{systemYeps} with respect to time:
\[
\begin{aligned}
y_{\epsilon tt} - \big[(a(u_\epsilon) y_{\epsilon x})_x\big]_t
&+ \big[b(u)(u_{\epsilon x}+u_x) y_{\epsilon x}\big]_t \\
&- \frac{1}{\epsilon}\big[(a(u_\epsilon)-a(u))u_x\big]_{xt} \\
&+ \frac{1}{\epsilon}\big[(b(u_\epsilon)-b(u))u_{\epsilon x}^2\big]_t = h_t.
\end{aligned}
\]
We test with $y_{\epsilon t}$ and integrate over $I$. Since the equation has been
differentiated in time, the principal term yields, after integration by parts in
space, the coercive quantity of this estimate together with one absorbable remainder:
\[
\begin{aligned}
-\int_I \big[(a(u_\epsilon) y_{\epsilon x})_x\big]_t\, y_{\epsilon t}\,dx
&= \int_I a(u_\epsilon)\,|y_{\epsilon tx}|^2\,dx
+ \int_I a'(u_\epsilon)\,u_{\epsilon t}\, y_{\epsilon x}\, y_{\epsilon tx}\,dx \\
&\geqslant a_0\|y_{\epsilon tx}\|^2
- C\,\|u_{\epsilon t}\|_{L^\infty}\,\|y_{\epsilon x}\|\,\|y_{\epsilon tx}\| \\
&\geqslant a_0\|y_{\epsilon tx}\|^2 - \eta\|y_{\epsilon tx}\|^2
- C_\eta \|u_{\epsilon t}\|_{L^\infty}^2 \|y_{\epsilon x}\|^2,
\end{aligned}
\]
where, by the one-dimensional embedding $H^1_0(I)\hookrightarrow L^\infty(I)$,
\[
\|u_{\epsilon t}(t,\cdot)\|_{L^\infty}^2 \leqslant C\,\|u_{\epsilon t}(t,\cdot)\|_{H^1_0}^2 \in L^1(0,T)
\quad\text{since } u_\epsilon\in W;
\]
this is a genuine function of $t$ (not a constant), and it is precisely the origin of the
$\|u_{\epsilon t}\|_{H^1_0}^2$ term in the Gronwall coefficient $\kappa_\epsilon$ defined below.
The contributions of $\big[b(u)(u_{\epsilon x}+u_x)y_{\epsilon x}\big]_t$
and of the singular $b$-difference are estimated exactly as in Estimates~II--III (one
$L^\infty(I)$-factor pulled out via $H^1_0(I)\hookrightarrow L^\infty(I)$ and the
interpolation $\|v_x\|_{L^\infty}^2\leqslant C\|v_{xx}\|\,\|v_x\|$, the remaining factors
kept in $L^2$), and we do not repeat them.
The only genuinely new contribution is the time-differentiated singular term
$-\tfrac{1}{\epsilon}\big[(a(u_\epsilon)-a(u))u_x\big]_{xt}$. Writing
\[
\frac{a(u_\epsilon)-a(u)}{\epsilon} = A(u,u_\epsilon)\,y_\epsilon,
\qquad
A(u,u_\epsilon) := \int_0^1 a'\big(s(u_\epsilon-u)+u\big)\,ds,
\]
with $|A|,|A_t|\leqslant C$ by (ii) and $u_t,u_{\epsilon t}\in L^2(0,T;H^1_0(I))$,
integration by parts in space gives
\[
-\frac{1}{\epsilon}\int_I \big[(a(u_\epsilon)-a(u))u_x\big]_{xt}\, y_{\epsilon t}\,dx
= \int_I \big[A_t\,y_\epsilon\,u_x + A\,y_{\epsilon t}\,u_x + A\,y_\epsilon\,u_{xt}\big]\,y_{\epsilon tx}\,dx.
\]
Using the embedding $\|v\|_{L^\infty}\leqslant C\|v_x\|$ for $v\in H^1_0(I)$, together
with the interpolation $\|u_x\|_{L^\infty}\leqslant C\|u_{xx}\|^{1/2}\|u_x\|^{1/2}$
(which keeps the resulting coefficients in $L^1(0,T)$), each of the three subterms
is bounded by $\eta\|y_{\epsilon tx}\|^2 + C_\eta\,\kappa_\epsilon(t)\big(\|y_{\epsilon t}\|^2+\|y_{\epsilon x}\|^2\big)$;
in particular the subterm carrying $u_{xt}$ produces the coefficient
$\|u_{xt}\|^2=\|u_t\|_{H^1_0}^2$, which is what brings $\|u_t\|_{H^1_0}^2$ into $\kappa_\epsilon$.
Choosing $\eta$ small and collecting all contributions, we arrive at
\begin{equation} \label{EstFourSemiFinal}
\frac{d}{dt}\|y_{\epsilon t}\|^2 + a_0\|y_{\epsilon t x}\|^2
\leqslant C\,\kappa_\epsilon(t)\,\big(\|y_{\epsilon t}\|^2 + \|y_{\epsilon x}\|^2\big) + C\,\|h_t\|^2,
\end{equation}
where
\[
\kappa_\epsilon(t) := 1 + \|u_t(t,\cdot)\|_{H^1_0}^2 + \|u_{\epsilon t}(t,\cdot)\|_{H^1_0}^2
+ \|u_{\epsilon xx}(t,\cdot)\|^2 + \|u_{xx}(t,\cdot)\|^2 \in L^1(0,T)
\]
since $u, u_\epsilon \in W$. Note that the coefficient $\kappa_\epsilon$ does depend on $\epsilon$
through $u_\epsilon$; what matters, however, is that its $L^1(0,T)$-norm is bounded
\emph{uniformly} in $\epsilon$. Indeed, $f + \epsilon h \in \mathcal{U}_{ad}$ for every
$\epsilon \in (0,1]$ by the convexity of $\mathcal{U}_{ad}$, so Theorem~\ref{thm:well_posedness}
gives $\|u_\epsilon\|_W \leqslant C$ with $C$ independent of $\epsilon$, whence
\[
\|\kappa_\epsilon\|_{L^1(0,T)} \leqslant T + \|u\|_W^2 + \|u_\epsilon\|_W^2 \leqslant C
\qquad \text{for all } \epsilon \in (0,1].
\]
Consequently, the Gronwall factor $\exp\big(C\|\kappa_\epsilon\|_{L^1(0,T)}\big)$ is
bounded independently of $\epsilon$. Applying Gronwall's lemma and using \eqref{Conc_1Est_2eps},
\begin{equation}\label{EstFourFinal}
\sup_{t\in[0,T]}\|y_{\epsilon t}\|^2 + \|y_{\epsilon t x}\|_{L^2(Q)}^2
\leqslant C\,\|h\|_{H^1(0,T;L^2(I))}^2.
\end{equation}

Putting together \eqref{ResultingEst_1Yeps}, \eqref{FinalEstimateIYeps}, \eqref{Conc_1Est_2eps}, \eqref{Conc_2Est_2eps}, \eqref{EstIIIepsPt_1}, and \eqref{EstFourFinal}, Claim~1 is proved.

\medskip
By the Aubin--Lions--Simon Theorem \cite{lions1969quelques,aubin1963analyse,simon1986compact}, the embeddings
\[
W \hookrightarrow L^2(0,T;H^1_0(I)),\qquad W \hookrightarrow C\big([0,T]; L^2(I)\big)
\]
are compact (the first by Aubin--Lions applied to $u \in L^2(0,T;H^2(I))$ with $u_t \in L^2(0,T;L^2(I))$; the second by composing $W \hookrightarrow H$ continuously with the compact embedding $H \hookrightarrow C([0,T]; L^2(I))$). In particular, the family $\{y_\epsilon\}$ being bounded in $W$ admits, along a subsequence $\epsilon_k \to 0^+$, a function $z \in W$ with $z|_{\partial_p Q} \equiv 0$ such that
\begin{align}
y_{\epsilon_k} &\rightharpoonup z \text{ in } W, \label{Conv1}\\
y_{\epsilon_k} &\to z \text{ in } L^2(0,T;H^1_0(I))\cap C([0,T];L^2(I)) \text{ and almost everywhere in } Q. \label{Conv2}
\end{align}
By the uniform-boundedness principle,
\[
\|z\|_W \leqslant C\|h\|_{H^1(0,T;L^2(I))}.
\]

\medskip\noindent\textbf{CLAIM 2.} \emph{For almost every $(t,x)\in Q$, $z(t,x) = y(t,x)$, where $y$ is the unique solution of the linearized system \eqref{linearized_system}.}

We verify that $z$ is a weak solution of \eqref{linearized_system}; uniqueness of weak solutions then forces $z \equiv y$.

Let us begin by fixing $\phi \in H$. The linear form $w \in W \mapsto \int_Q w_t \phi\,d(t,x)$ is continuous, so
\begin{equation} \label{zWeakSoln1}
\int_Q y_{\epsilon_k t}\, \phi\, d(t,x) \xrightarrow[k\to\infty]{} \int_Q z_t\, \phi\, d(t,x).
\end{equation}

Next, decompose
\[
\begin{aligned}
\int_Q a(u_{\epsilon_k})\, y_{\epsilon_k x}\, \phi_x\, d(t,x)
&= \int_Q (a(u_{\epsilon_k})-a(u))\, y_{\epsilon_k x}\, \phi_x\, d(t,x) \\
&\quad + \int_Q a(u)\, y_{\epsilon_k x}\, \phi_x\, d(t,x) \\
&=: A_k + B_k.
\end{aligned}
\]
By the MVT, $|a(u_{\epsilon_k})-a(u)| \leq C|u_{\epsilon_k}-u| = C\epsilon_k|y_{\epsilon_k}|$, so
\[
|A_k| \leq C\epsilon_k \int_Q |y_{\epsilon_k}|\,|y_{\epsilon_k x}|\,|\phi_x|\,d(t,x) \leq C\epsilon_k \|y_{\epsilon_k}\|_W^2\|\phi\|_H \to 0.
\]
On the other hand, the linear form $w \in W \mapsto \int_Q a(u) w_x \phi_x\,d(t,x)$ is continuous (since $|a(u)|\leq C$), hence by \eqref{Conv1},
\[
B_k \to \int_Q a(u) z_x \phi_x\, d(t,x).
\]
Therefore,
\begin{equation} \label{zWeakSoln2}
\int_Q a(u_{\epsilon_k}) y_{\epsilon_k x} \phi_x\, d(t,x) \xrightarrow[k\to\infty]{} \int_Q a(u) z_x \phi_x\, d(t,x).
\end{equation}

We next show
\begin{equation} \label{zWeakSoln3}
\int_Q \frac{a(u_{\epsilon_k})-a(u)}{\epsilon_k}\, u_x\, \phi_x\, d(t,x) \xrightarrow[k\to\infty]{} \int_Q a'(u)\, u_x\, z\, \phi_x\, d(t,x).
\end{equation}
Decompose
\[
\int_Q \frac{a(u_{\epsilon_k})-a(u)}{\epsilon_k} u_x \phi_x\, d(t,x) = \widetilde A_k + \widetilde B_k,
\]
where
\begin{align*}
\widetilde A_k &:= \int_Q \left(\frac{a(u_{\epsilon_k})-a(u)}{\epsilon_k} - a'(u) y_{\epsilon_k}\right) u_x \phi_x\, d(t,x), \\
\widetilde B_k &:= \int_Q a'(u)\, u_x\, y_{\epsilon_k}\, \phi_x\, d(t,x).
\end{align*}
\[
\begin{aligned}
\left|
\left(\frac{a(u_{\epsilon_k})-a(u)}{\epsilon_k} - a'(u) y_{\epsilon_k}\right) u_x \phi_x
\right|
&\leq C|y_{\epsilon_k}|\,|u_x|\,|\phi_x| \\
&\leq C\sup_k\|y_{\epsilon_k}\|_W\, |u_x|\,|\phi_x| \in L^1(Q).
\end{aligned}
\]
Setting $u^{s,\epsilon_k} := s(u_{\epsilon_k}-u)+u$ --- the interpolant already introduced in (ii), now with $\epsilon = \epsilon_k$ --- the MVT gives
\[
\left|\frac{a(u_{\epsilon_k})-a(u)}{\epsilon_k} - a'(u) y_{\epsilon_k}\right| \leq |y_{\epsilon_k}| \sup_{0\le s\le 1}|a'(u^{s,\epsilon_k})-a'(u)|.
\]
Since $a'$ is continuous and $u_{\epsilon_k}\to u$ a.e.\ (because $y_{\epsilon_k}\to z$ a.e.\ and $u_{\epsilon_k} = u + \epsilon_k y_{\epsilon_k}$), the right-hand side tends to $0$ a.e.\ Moreover, by (ii), $|a'(u^{s,\epsilon_k})-a'(u)| \leq C$, and the dominated convergence theorem yields $\widetilde A_k \to 0$. For $\widetilde B_k$, the linear form $w \in W \mapsto \int_Q a'(u) u_x w \phi_x\, d(t,x)$ is continuous (using \eqref{Conv1}), giving
\[
\widetilde B_k \to \int_Q a'(u) u_x z \phi_x\, d(t,x).
\]
This proves \eqref{zWeakSoln3}.

The convergences
\begin{align}
\int_Q \frac{b(u_{\epsilon_k})-b(u)}{\epsilon_k} u_{\epsilon_k x}^2 \phi\, d(t,x) &\xrightarrow[k\to\infty]{} \int_Q b'(u) u_x^2 z \phi\, d(t,x), \label{zWeakSoln4} \\
\int_Q b(u)(u_{\epsilon_k x}+u_x) y_{\epsilon_k x} \phi\, d(t,x) &\xrightarrow[k\to\infty]{} \int_Q 2 b(u) u_x z_x \phi\, d(t,x), \label{zWeakSoln5}
\end{align}
are proved analogously (using the strong convergence $u_{\epsilon_k x} \to u_x$ in $L^2(0,T;L^\infty(I))$, which follows from the boundedness of $u_{\epsilon_k}$ in $W$ and Aubin--Lions, together with $u_{\epsilon_k} = u + \epsilon_k y_{\epsilon_k}$).

Finally, multiplying the equation in \eqref{systemYeps} (with $\epsilon = \epsilon_k$) by $\phi$, integrating over $Q$, and passing to the limit using \eqref{zWeakSoln1}--\eqref{zWeakSoln5}:
\begin{equation} \label{ConcZWeakSoln}
\int_Q\Big\{z_t \phi + a(u) z_x \phi_x + a'(u) u_x z \phi_x + 2 b(u) u_x z_x \phi + b'(u) u_x^2 z \phi\Big\} d(t,x) = \int_Q h \phi\, d(t,x).
\end{equation}
This is the weak formulation of the linearized system with right-hand side $h$. Hence $z \in W$ is the unique weak solution, i.e.\ $z = y$, and the limit \emph{does not depend on the subsequence}. By a standard subsequence argument, the convergence \eqref{Conv1}--\eqref{Conv2} holds for the full sequence $\epsilon \to 0^+$. This concludes the proof.
\end{proof}


The differentiability result has several important consequences.

\begin{corollary}[Local Lipschitz continuity of the control-to-state mapping] \label{cor:continuity}
The control-to-state mapping $\mathcal{F}: \mathcal{U}_{ad} \to W$ is locally Lipschitz continuous. In particular, there exists $C>0$ depending only on the data and on a uniform bound on $\|f\|_{H^1(0,T;L^2)},\|g\|_{H^1(0,T;L^2)}$ such that
\[
\|u(g) - u(f)\|_W \leq C \|g - f\|_{H^1(0,T;L^2(I))} \qquad \forall\, f,g \in \mathcal{U}_{ad}.
\]
\end{corollary}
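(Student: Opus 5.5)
The natural route does not pass through the derivative $\mathcal{F}'$ at all. It reuses the difference-quotient estimates of Theorem~\ref{thm:gateaux_differentiability}. Fix $f,g\in\mathcal{U}_{ad}$, set $h:=g-f$, and take $\epsilon=1$ in the construction of that proof. Then $u_1=u(f+h)=u(g)$ and $y_1=u(g)-u(f)$. System \eqref{systemYeps} with $\epsilon=1$ is not a linearization. It is the exact equation satisfied by the difference $w:=u(g)-u(f)$:
\[
w_t-(a(u(g))w_x)_x+b(u(f))(u(g)_x+u(f)_x)w_x-\big[(a(u(g))-a(u(f)))u(f)_x\big]_x+(b(u(g))-b(u(f)))u(g)_x^2=h .
\]
Claim~1 of Theorem~\ref{thm:gateaux_differentiability} is stated for every $\epsilon\in(0,1]$, and at $\epsilon=1$ it reads
\[
\|u(g)-u(f)\|_W^2=\|y_1\|_W^2\le C\|h\|_{H^1(0,T;L^2(I))}^2 .
\]
This is exactly the asserted bound, and local Lipschitz continuity follows at once.

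The real work is to confirm that the constant in Claim~1 has the claimed dependence. I would trace Estimates~I--IV and list what they actually use. They use the pointwise bounds (i)--(ii) on $a,b,a',a'',b',b''$ along the segment $s u(g)+(1-s)u(f)$, which follow from the $L^\infty(Q)$ bounds \eqref{appDtrivial_1}--\eqref{appDtrivial_2}. They use the uniform bounds $\|u_x(t)\|,\|u_{g,x}(t)\|\le C$ that feed \eqref{GN_sharp_uniform}. Finally, the Gronwall factors $\exp(C\|\Lambda\|_{L^1})$ and $\exp(C\|\kappa\|_{L^1})$ are controlled by $T+\|u(f)\|_W^2+\|u(g)\|_W^2$. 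All of these are bounded through Theorem~\ref{thm:well_posedness}, estimate \eqref{eq:energy_estimate}, in terms of $\|u_0\|_{H^2}$, $\|f\|_{H^1(0,T;L^2)}$ and $\|g\|_{H^1(0,T;L^2)}$. The constant therefore depends only on the data and on a bound for these norms. Under (\textbf{A4})--(\textbf{A5}), Remark~\ref{rem:implication_assumptions} even gives a bound uniform over $\mathcal{U}_{ad}$.

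The main obstacle is a point the proof of Claim~1 used implicitly: the intermediate controls must be admissible. In Estimate~IV the uniform bound on $\|u_\epsilon\|_W$ was justified by $f+\epsilon h\in\mathcal{U}_{ad}$ by convexity. At $\epsilon=1$ the intermediate control is simply $g\in\mathcal{U}_{ad}$, so no convexity is needed and the same argument applies verbatim.

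There is a second way to reach the same bound, which I would record as a cross-check. One writes $u(g)-u(f)=\int_0^1\mathcal{F}'(f+\theta h)h\,d\theta$ and applies the bound $\|\mathcal{F}'(\cdot)h\|_W\le C\|h\|_{H^1(0,T;L^2)}$ of Theorem~\ref{thm:gateaux_differentiability} uniformly along the segment. This route requires continuity of $\theta\mapsto u(f+\theta h)$ in $W$ to justify the integral, so the direct $\epsilon=1$ argument above is preferable.
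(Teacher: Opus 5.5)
Your proposal is correct and is essentially the paper's own argument: the paper takes the equation for $w=u(g)-u(f)$ obtained from \eqref{systemYeps} by replacing $u_\epsilon$ with $u(g)$ and not dividing by $\epsilon$ (exactly your $\epsilon=1$ case), and then invokes the $\epsilon$-independent energy estimates of Claim~1. Your tracing of the constant's dependence through the $L^\infty$ bounds, the Gagliardo--Nirenberg step and the Gronwall factors, and your remark that at $\epsilon=1$ the admissibility of $g$ replaces the convexity argument, make explicit what the paper leaves implicit.
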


\begin{proof}
Let $w := u(g)-u(f)$. By the same argument used to derive \eqref{systemYeps}, with $u_\epsilon$ replaced by $u(g)$ and $y_\epsilon$ replaced by $w$ (without dividing by $\epsilon$), we obtain a system of the same structure with right-hand side $g-f$. The energy estimates of Claim~1 (with constants independent of $\epsilon$) yield the conclusion.
\end{proof}

\begin{corollary}[Strong convergence of difference quotients]
\label{cor:limit_strong_weak}
Under the assumptions and notation of Theorem~\ref{thm:gateaux_differentiability}, as $\epsilon \to 0^+$,
\[
y_\epsilon \to y \quad \text{strongly in } L^2(0,T;H^1_0(I))\cap C([0,T];L^2(I)) \quad\text{and}\quad y_\epsilon \rightharpoonup y \quad \text{weakly in } W.
\]
\end{corollary}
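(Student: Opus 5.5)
The plan is to extract the statement directly from the proof of Theorem~\ref{thm:gateaux_differentiability}, upgrading subsequence convergence to convergence of the whole family $\epsilon \to 0^+$ by the standard ``every subsequence has a further subsequence'' argument.

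First, by Claim~1, the family $\{y_\epsilon\}_{\epsilon\in(0,1]}$ is bounded in $W$ by $C\|h\|_{H^1(0,T;L^2(I))}$, with $C$ independent of $\epsilon$. This independence holds because $f+\epsilon h\in\mathcal U_{ad}$ by convexity, so the Gronwall coefficients $\Lambda_\epsilon,\kappa_\epsilon$ have $L^1(0,T)$-norms bounded uniformly. Next, fix an arbitrary sequence $\epsilon_n\to0^+$ and an arbitrary subsequence of it. Since $W$ is a Hilbert space (Remark~\ref{rem:banach_space}), we can extract a further subsequence converging weakly in $W$ to some $z$. By the compact embeddings $W\hookrightarrow L^2(0,T;H^1_0(I))$ and $W\hookrightarrow C([0,T];L^2(I))$ from the Aubin--Lions--Simon theorem, a further subsequence converges strongly in $L^2(0,T;H^1_0(I))\cap C([0,T];L^2(I))$ and a.e.\ in $Q$, necessarily to the same $z$.

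Claim~2 then identifies $z$ as a weak solution of \eqref{linearized_system} with $h=g-f$. The passage to the limit in the terms \eqref{zWeakSoln1}--\eqref{zWeakSoln5} uses only this kind of subsequential convergence, together with $u_{\epsilon}=u+\epsilon y_\epsilon\to u$ in $W$-bounded fashion. By uniqueness in Lemma~\ref{lem:linearized_system}, $z=y$ independently of the subsequence chosen. Since every subsequence of $y_{\epsilon_n}$ has a further subsequence converging to the same limit $y$, both weakly in $W$ and strongly in the stated spaces, the whole sequence converges in both senses. Because the sequence $\epsilon_n\to 0^+$ was arbitrary, the convergence holds as $\epsilon\to0^+$.

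The main point to check is that the weak limit and the strong limit coincide, and that the compactness claim is valid. For the first, $W$ embeds continuously into $L^2(0,T;H^1_0(I))$, so weak convergence in $W$ implies weak convergence there, and strong limits agree with weak limits. For the compactness of $W\hookrightarrow C([0,T];L^2(I))$, I would argue via Aubin--Lions--Simon using $W\subset L^\infty(0,T;H^1_0(I))$ with $u_t\in L^2(0,T;H^1_0(I))$ and $H^1_0(I)\Subset L^2(I)$, rather than through $H$. Simon's criterion requires boundedness in $L^\infty(0,T;X)$ with $X\Subset B$ and $u_t$ bounded in $L^r(0,T;Y)$ for some $r>1$, and both conditions hold here.
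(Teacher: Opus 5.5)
Your proposal is correct and is essentially the paper's argument: the paper argues by contradiction (a subsequence staying a fixed distance $\eta>0$ from $y$ in $L^2(0,T;H^1_0(I))\cap C([0,T];L^2(I))$ would, by the uniform $W$-bound of Claim~1, compactness, and the identification in Claim~2, have a further subsequence converging to $y$), which is the contrapositive of your subsequence-of-a-subsequence principle, and it treats weak convergence in $W$ the same way. Your justification of the compactness of $W\hookrightarrow C([0,T];L^2(I))$, via Simon's criterion and $W\hookrightarrow C([0,T];H^1_0(I))$, is actually better than the paper's route, which relies on a claimed compact embedding $H\hookrightarrow C([0,T];L^2(I))$ that is false: with $(\lambda_n,\psi_n)$ the $L^2$-normalized Dirichlet eigenpairs, $e^{-\lambda_n t}\psi_n$ is bounded in $H$, yet its values $\psi_n$ at $t=0$ have no $L^2(I)$-convergent subsequence.
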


\begin{proof}
Denote $X := L^2(0,T;H^1_0(I))\cap C([0,T];L^2(I))$. Suppose, for contradiction, that the convergence does not hold strongly in $X$. Then there is a subsequence $\epsilon_k \to 0$ and $\eta > 0$ with $\|y_{\epsilon_k} - y\|_X \geq \eta$ for all $k$. By the proof of Theorem~\ref{thm:gateaux_differentiability}, the family $\{y_{\epsilon_k}\}$ is bounded in $W$, so by the compactness of $W \hookrightarrow X$ we can extract a further subsequence converging strongly in $X$ to a limit which, by the argument of Claim~2, must equal $y$. This contradicts $\|y_{\epsilon_k} - y\|_X \geq \eta$. The weak convergence in $W$ follows analogously from the same boundedness and identification of the weak limit.
\end{proof}


In this section, we have established that the control-to-state mapping $\mathcal{F}: f \mapsto u(f)$ is G\^ateaux differentiable with derivative given by the solution of the linearized system \eqref{linearized_system}. Combined with the local Lipschitz continuity of $\mathcal{F}$, this provides the foundation for the optimal control analysis in the subsequent sections, and will be essential for deriving first-order necessary optimality conditions.

\section{Optimal control problem: existence theory} \label{sec:optimal_control_existence}


We now formulate the optimal control problem under investigation. Let $V$ be a Hilbert space (the observation space), $\Xi_d \in V$ a target element, $S: W \to V$ a continuous linear observation operator, and $N: L^2(Q) \to L^2(Q)$ a continuous, self-adjoint, positive-definite linear operator (the control-cost operator).

\begin{definition}[Cost functional] \label{def:cost_functional}
The optimal control problem consists of minimizing the functional $J: \mathcal{U}_{ad} \to \mathbb{R}$ defined by
\begin{equation} \label{cost_functional}
    J(f) := \frac{1}{2} \|S u(f) - \Xi_d\|_V^2 + \frac{1}{2} (N f, f)_{L^2(Q)},
\end{equation}
where $u(f) \in W$ is the strong solution of \eqref{MainModel} corresponding to the control $f \in \mathcal{U}_{ad}$.
\end{definition}

The first term in \eqref{cost_functional} penalizes the deviation of the observed state $S u(f)$ from the target $\Xi_d$, while the second term represents the control cost. Typical choices include: $V = L^2(Q)$ with $\Xi_d = u_d$ for tracking of the entire trajectory; $V = L^2(I)$ with $\Xi_d = u_d^T$ for tracking of the final state; and $N = \lambda I$ for Tikhonov regularization. The general setting also allows tracking observations like $V = L^2(Q)\times L^2(I)$, $\Xi_d = (Lu_d, Du_d^T)$, $Su = (Lu, Du(T,\cdot))$ as in Section~\ref{sec:characterization}.


The G\^ateaux differentiability of the control-to-state mapping established in Theorem~\ref{thm:gateaux_differentiability} immediately implies the differentiability of the cost functional.

\begin{theorem}[Differentiability of $J$] \label{thm:diff_cost}
The cost functional $J: \mathcal{U}_{ad} \to \mathbb{R}$ is G\^ateaux differentiable. For any $f, g \in \mathcal{U}_{ad}$,
\begin{equation} \label{cost_derivative}
    \langle J'(f), g - f \rangle = (S u(f) - \Xi_d, S y)_{V} + (N f, g - f)_{L^2(Q)},
\end{equation}
where $y = \mathcal{F}'(f)(g - f) \in W$ is the solution of the linearized system \eqref{linearized_system} with $h = g - f$.
\end{theorem}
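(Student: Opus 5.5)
The plan is to split $J$ into the tracking part and the control-cost part, compute the one-sided directional derivative of each along $g-f$, and then invoke Theorem~\ref{thm:gateaux_differentiability}. Fix $f,g\in\mathcal{U}_{ad}$, set $h=g-f$, and for $\epsilon\in(0,1]$ write $f_\epsilon=f+\epsilon h$. By convexity $f_\epsilon\in\mathcal{U}_{ad}$, so $u_\epsilon:=u(f_\epsilon)$ is well defined by Remark~\ref{rem:implication_assumptions}. Let $y_\epsilon=(u_\epsilon-u)/\epsilon$ and $y=\mathcal{F}'(f)h$. The claim is then
\[
\lim_{\epsilon\to0^+}\frac{J(f_\epsilon)-J(f)}{\epsilon}=(Su(f)-\Xi_d,Sy)_V+(Nf,h)_{L^2(Q)} .
\]

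The control-cost term is purely algebraic. Since $N$ is self-adjoint,
\[
\tfrac12(Nf_\epsilon,f_\epsilon)-\tfrac12(Nf,f)=\epsilon(Nf,h)+\tfrac{\epsilon^2}{2}(Nh,h).
\]
Dividing by $\epsilon$ and letting $\epsilon\to0^+$ gives $(Nf,h)_{L^2(Q)}$. Here $h\in H^1(0,T;L^2(I))\subset L^2(Q)$ and $N$ is bounded, so every quantity is finite.

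For the tracking term, linearity of $S$ gives $Su_\epsilon-\Xi_d=(Su-\Xi_d)+\epsilon Sy_\epsilon$. Expanding the square,
\[
\tfrac{1}{2\epsilon}\bigl(\|Su_\epsilon-\Xi_d\|_V^2-\|Su-\Xi_d\|_V^2\bigr)=(Su-\Xi_d,Sy_\epsilon)_V+\tfrac{\epsilon}{2}\|Sy_\epsilon\|_V^2 .
\]
We need the first term to converge to $(Su-\Xi_d,Sy)_V$ and the second to vanish.

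\begin{itemize}
\item Claim~1 in the proof of Theorem~\ref{thm:gateaux_differentiability} gives $\|y_\epsilon\|_W\le C\|h\|_{H^1(0,T;L^2(I))}$ uniformly in $\epsilon$. Since $S$ is continuous, $\tfrac{\epsilon}{2}\|Sy_\epsilon\|_V^2\le C\epsilon\to0$.
\item For the first term, Theorem~\ref{thm:gateaux_differentiability} asserts that the limit defining $\mathcal{F}'(f)h$ exists in $W$, i.e.\ $y_\epsilon\to y$ in $W$, so continuity of $S$ finishes the step.
\end{itemize}

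The only delicate point is what kind of convergence the proof of Theorem~\ref{thm:gateaux_differentiability} actually delivers. Corollary~\ref{cor:limit_strong_weak} only guarantees $y_\epsilon\rightharpoonup y$ weakly in $W$ together with strong convergence in weaker norms. Weak convergence is still enough here, which is the reason for writing the tracking term as a linear functional of $y_\epsilon$. Because $S:W\to V$ is bounded linear, it is weak-to-weak continuous, so $Sy_\epsilon\rightharpoonup Sy$ in $V$. The map $v\mapsto(Su-\Xi_d,v)_V$ is a continuous linear functional, so $(Su-\Xi_d,Sy_\epsilon)_V\to(Su-\Xi_d,Sy)_V$. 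The full family converges, not just a subsequence, because Corollary~\ref{cor:limit_strong_weak} identifies the limit uniquely.

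Adding the two limits yields \eqref{cost_derivative}. The resulting map $h\mapsto(Su-\Xi_d,S\mathcal{F}'(f)h)_V+(Nf,h)$ is linear in $h$ and bounded by $C\|h\|_{H^1(0,T;L^2(I))}$, by \eqref{linearized_estimate}. This is the sense of Gâteaux differentiability on the convex set $\mathcal{U}_{ad}$.
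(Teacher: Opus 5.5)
Your proof is correct and follows the same route as the paper, which splits $J=\Phi\circ\mathcal{F}+\Psi$ and invokes the chain rule together with Theorem~\ref{thm:gateaux_differentiability}. Your explicit expansion of the quadratic tracking term also shows that only the uniform $W$-bound on $y_\epsilon$ and the weak convergence $y_\epsilon\rightharpoonup y$ in $W$ from Corollary~\ref{cor:limit_strong_weak} are needed, which matters because that weak convergence is all the proof of Theorem~\ref{thm:gateaux_differentiability} actually establishes.
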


\begin{proof}
This follows from the chain rule applied to the composition $J(f) = \Phi(u(f)) + \Psi(f)$, where
\[
\Phi(u) = \frac{1}{2} \|S u - \Xi_d\|_V^2, \quad \Psi(f) = \frac{1}{2} (N f, f)_{L^2(Q)}.
\]
The differentiability of $\Phi \circ \mathcal{F}$ is guaranteed by Theorem~\ref{thm:gateaux_differentiability}, while $\Psi$ is Fréchet differentiable on $L^2(Q)$.
\end{proof}


The main result of this section establishes the existence of at least one solution to the optimal control problem.

\begin{theorem}[Existence of optimal control] \label{thm:existence_optimal}
There exists an optimal control $f^* \in \mathcal{U}_{ad}$ such that
\[
J(f^*) = \min_{f \in \mathcal{U}_{ad}} J(f).
\]
\end{theorem}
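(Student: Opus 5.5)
I will use the direct method of the calculus of variations. Since $J \geq 0$ on $\mathcal{U}_{ad}$ (because $N$ is positive definite), the infimum $j := \inf_{f\in\mathcal{U}_{ad}} J(f)$ is finite. Take a minimizing sequence $\{f_n\}\subset\mathcal{U}_{ad}$ with $J(f_n)\to j$.

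The first step is a uniform bound. By (\textbf{A4}), every $f\in\mathcal{U}_{ad}$ satisfies $\|f\|_{L^2(Q)}+\|f_t\|_{L^2(Q)}\le C\delta$, so $\{f_n\}$ is bounded in $H^1(0,T;L^2(I))$. I do not rely on coercivity of $J$ here. Passing to a subsequence gives $f_n\rightharpoonup f^*$ weakly in $H^1(0,T;L^2(I))$. Since $\mathcal{U}_{ad}$ is closed and convex, it is weakly closed, so $f^*\in\mathcal{U}_{ad}$. By Remark~\ref{rem:implication_assumptions} and Theorem~\ref{thm:well_posedness}, the states $u_n:=u(f_n)$ satisfy $\|u_n\|_W\le C\delta$. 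Hence, after a further subsequence, $u_n\rightharpoonup \bar u$ weakly in $W$. By the compact embeddings recorded in the proof of Theorem~\ref{thm:gateaux_differentiability}, we also get $u_n\to\bar u$ strongly in $L^2(0,T;H^1_0(I))\cap C([0,T];L^2(I))$ and a.e.\ in $Q$. Moreover, $\bar u(0)=u_0$.

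The second step, which I expect to be the main obstacle, is to show $\bar u=u(f^*)$, i.e.\ to pass to the limit in the quadratic gradient term. I test the state equation with $\phi\in L^\infty(0,T;H^1_0(I))$, or simply with $\phi\in C_c^\infty(Q)$, and handle each term as follows.
\begin{itemize}
\item The terms $\int_Q u_{n,t}\phi$ and $\int_Q f_n\phi$ converge by weak convergence.
\item For $\int_Q a(u_n)u_{n,x}\phi_x$, I use that $u_n$ is bounded in $L^\infty(Q)$ (via $W\hookrightarrow C([0,T];H^1_0)$ in one dimension), so $a(u_n)\to a(\bar u)$ boundedly a.e. Combined with $u_{n,x}\to\bar u_x$ in $L^2(Q)$, this gives convergence.
\item For $b(u_n)u_{n,x}^2$, the needed ingredient is $u_{n,x}\to \bar u_x$ strongly in $L^2(Q)$. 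Writing $u_{n,x}^2-\bar u_x^2=(u_{n,x}-\bar u_x)(u_{n,x}+\bar u_x)$, the product converges in $L^1(Q)$, and $b(u_n)$ is uniformly bounded and converges a.e., so the term converges against bounded $\phi$.
\end{itemize}
The limit $\bar u\in W$ is therefore a strong solution with control $f^*$. By the uniqueness part of Theorem~\ref{thm:well_posedness}, $\bar u=u(f^*)$, and the whole sequence $u_n$ converges to it.

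The third step is weak lower semicontinuity. Since $S:W\to V$ is linear and continuous, $Su_n\rightharpoonup Su(f^*)$ weakly in $V$. The norm is weakly lower semicontinuous, so $\|Su(f^*)-\Xi_d\|_V^2\le\liminf\|Su_n-\Xi_d\|_V^2$. The map $f\mapsto (Nf,f)_{L^2(Q)}$ is convex and continuous on $L^2(Q)$ (as $N$ is self-adjoint and positive), hence weakly lower semicontinuous. Also, $f_n\rightharpoonup f^*$ in $L^2(Q)$ because $H^1(0,T;L^2)\hookrightarrow L^2(Q)$ continuously. Adding the two inequalities gives $J(f^*)\le\liminf J(f_n)=j$, so $f^*$ is optimal.
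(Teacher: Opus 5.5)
Your proposal is correct and follows essentially the same route as the paper: the direct method, boundedness of $\mathcal{U}_{ad}$ from (\textbf{A4}), uniform $W$-bounds on the states, Aubin--Lions compactness, identification of the limit via the uniqueness part of Theorem~\ref{thm:well_posedness}, and weak lower semicontinuity of both terms of $J$. The only minor difference is in the quadratic gradient term $b(u_n)u_{n,x}^2$: you use just the strong $L^2(Q)$ convergence of $u_{n,x}$, so that $u_{n,x}^2\to\bar u_x^2$ in $L^1(Q)$, together with dominated convergence for $b(u_n)-b(\bar u)$; the paper instead applies Aubin--Lions again to the gradient family to obtain convergence in $L^2(0,T;L^4(I))$. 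Your shortcut is valid against bounded test functions and is slightly more economical.
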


\begin{proof}
The proof follows the direct method of the calculus of variations. Since $J(f) \geq 0$ for all $f \in \mathcal{U}_{ad}$, the infimum
\[
m := \inf_{f \in \mathcal{U}_{ad}} J(f)
\]
is finite. Let $\{f_k\}_{k \in \mathbb{N}} \subset \mathcal{U}_{ad}$ be a minimizing sequence, i.e., $\lim_{k \to \infty} J(f_k) = m$.

Since $\mathcal{U}_{ad}$ is closed and convex in $H^1(0,T;L^2(I))$, it is weakly closed. The boundedness of $\mathcal{U}_{ad}$ (which follows from (\textbf{A4})) yields, after extraction of a subsequence (still denoted by $\{f_k\}$), $f^* \in \mathcal{U}_{ad}$ such that
\begin{equation} \label{weak_conv_control}
    f_k \rightharpoonup f^* \quad \text{weakly in } H^1(0,T;L^2(I)).
\end{equation}

Let $u_k = u(f_k)$ be the corresponding states. By Theorem~\ref{thm:well_posedness} and (\textbf{A4})--(\textbf{A5}), we have the uniform bound
\[
\|u_k\|_W \leq C \qquad \text{for all } k \in \mathbb{N}.
\]
By the Aubin--Lions--Simon Theorem, the embedding $W \hookrightarrow L^2(0,T;H^1_0(I))$ is compact, and so is $W \hookrightarrow C([0,T];L^2(I))$. Up to a further subsequence,
\begin{align}
    u_k &\rightharpoonup u^* \quad \text{weakly in } W, \label{weak_conv_state} \\
    u_k &\to u^* \quad \text{strongly in } L^2(0,T;H^1_0(I))\ \text{and in } C([0,T];L^2(I)), \label{strong_conv_state} \\
    u_k &\to u^* \quad \text{a.e.\ in } Q. \label{ae_conv_state}
\end{align}
In particular, $u_k(T,\cdot) \to u^*(T,\cdot)$ in $L^2(I)$.

\smallskip
\noindent\emph{Identification of the limit: $u^* = u(f^*)$.}
We pass to the limit in the weak formulation of \eqref{MainModel} satisfied by each $u_k$. For every $\phi \in C^\infty_c(Q)$,
\begin{equation} \label{eq:weak_form_uk}
\int_Q\big[ -u_k \phi_t + a(u_k) u_{k,x} \phi_x + b(u_k) u_{k,x}^2 \phi - f_k \phi\big]\, d(t,x) = \int_I u_0(x)\phi(0,x)\,dx.
\end{equation}
The first and last terms pass to the limit by \eqref{strong_conv_state} and \eqref{weak_conv_control}, respectively. For the diffusion term: by \eqref{ae_conv_state} and the continuity of $a$, $a(u_k) \to a(u^*)$ a.e.; combined with the bound $|a(u_k)| \leq C(1+\|u_k\|_{L^\infty(Q)}^p) \leq C$ (using the embedding $W \hookrightarrow L^\infty(Q)$), the dominated convergence theorem gives $a(u_k) \to a(u^*)$ in $L^p(Q)$ for every $p < \infty$. Coupled with the weak convergence of $u_{k,x} \rightharpoonup u^*_x$ in $L^2(Q)$, this yields
\[
\int_Q a(u_k) u_{k,x} \phi_x\, d(t,x) \to \int_Q a(u^*) u^*_x \phi_x\, d(t,x).
\]
For the quadratic gradient term: by \eqref{strong_conv_state} we have $u_{k,x} \to u^*_x$ in $L^2(Q)$. A further inspection: by Theorem~\ref{thm:well_posedness}, $\{u_k\}$ is bounded in $L^2(0,T;H^2(I))$ and $\{u_{k,t}\}$ is bounded in $L^2(0,T;H^1_0(I))$, so the gradient family $\{u_{k,x}\}$ is bounded in $L^2(0,T;H^1(I)) \cap H^1(0,T;L^2(I))$ (the latter inclusion follows because $(u_{k,x})_t = (u_{k,t})_x$ and $u_{k,t} \in L^2(0,T;H^1_0(I))$). By Aubin--Lions--Simon, $u_{k,x} \to u^*_x$ strongly in $L^2(0,T;L^p(I))$ for every $p < \infty$ (in particular $p=4$), and a.e.\ in $Q$. Therefore $u_{k,x}^2 \to (u^*_x)^2$ strongly in $L^1(0,T;L^2(I))$. Combined with $b(u_k) \to b(u^*)$ a.e.\ and the uniform bound $|b(u_k)|\leq C$, we obtain
\[
\int_Q b(u_k) u_{k,x}^2 \phi\, d(t,x) \to \int_Q b(u^*) (u^*_x)^2 \phi\, d(t,x).
\]
Hence $u^* \in W$ is a strong solution of \eqref{MainModel} with control $f^*$, and uniqueness (Theorem~\ref{thm:well_posedness}) gives $u^* = u(f^*)$.

\smallskip
\noindent\emph{Weak lower semicontinuity of $J$.}
Since $S: W \to V$ is continuous and $u_k \rightharpoonup u^*$ in $W$, we have $Su_k \rightharpoonup Su^*$ in $V$, whence by the lower semicontinuity of the norm
\[
\liminf_{k\to\infty} \|Su_k - \Xi_d\|_V \geq \|Su^* - \Xi_d\|_V.
\]
Since $N$ is positive-definite and self-adjoint, $f \mapsto (Nf,f)_{L^2(Q)}$ is convex and continuous, hence weakly lower semicontinuous on $L^2(Q)$, so
\[
\liminf_{k\to\infty}(Nf_k, f_k)_{L^2(Q)} \geq (Nf^*, f^*)_{L^2(Q)}.
\]
Therefore
\[
J(f^*) \leq \liminf_{k\to\infty} J(f_k) = m,
\]
and since $f^* \in \mathcal{U}_{ad}$, we conclude $J(f^*) = m$.
\end{proof}

\begin{remark}[Technical aspects] \label{rem:existence_technical}
The proof relies crucially on: the compact embeddings $W \hookrightarrow C([0,T];L^2(I))$ and $W \hookrightarrow L^2(0,T;H^1_0(I))$ for extracting strongly convergent subsequences; Aubin--Lions on the gradient family $\{u_{k,x}\}$ for handling the quadratic gradient nonlinearity; and the weak lower semicontinuity of convex functionals. The one-dimensional setting is essential, since it provides the embeddings $H^1(I)\hookrightarrow L^\infty(I)$ and $W\hookrightarrow L^\infty(Q)$ that yield uniform $L^\infty$ bounds on $u_k$ and $u_{k,x}$.
\end{remark}

\begin{corollary}[Continuity with respect to data] \label{cor:continuous_dependence}
The optimal control depends continuously on the target data in the following sense: if $\Xi_d^n \to \Xi_d$ in $V$, then any sequence $\{f^{*n}\}$ of corresponding optimal controls admits a weakly convergent subsequence $f^{*n_k} \rightharpoonup f^*$ in $H^1(0,T;L^2(I))$, where $f^*$ is an optimal control for the limit problem.
\end{corollary}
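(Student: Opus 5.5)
The plan is a stability argument modelled on the existence proof of Theorem~\ref{thm:existence_optimal}, with the target as an extra varying parameter. Write $J_n$ for the cost functional with target $\Xi_d^n$ and $J$ for the one with target $\Xi_d$, and let $f^{*n}\in\mathcal{U}_{ad}$ minimize $J_n$. By (\textbf{A4}), $\mathcal{U}_{ad}$ is bounded in $H^1(0,T;L^2(I))$, since the pointwise bounds on $f$ and $f_t$ control their $L^2(Q)$ norms as in Remark~\ref{rem:implication_assumptions}. It is also closed and convex, hence weakly closed. So a subsequence satisfies $f^{*n_k}\rightharpoonup f^*\in\mathcal{U}_{ad}$ weakly in $H^1(0,T;L^2(I))$.

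\textbf{Step 1: states.} Set $u_k:=u(f^{*n_k})$. Theorem~\ref{thm:well_posedness} gives the uniform bound $\|u_k\|_W\le C\delta$. I then repeat the identification step of the proof of Theorem~\ref{thm:existence_optimal} word for word. Aubin--Lions--Simon, applied to $\{u_k\}$ and to the gradients $\{u_{k,x}\}$, yields $u_k\rightharpoonup u(f^*)$ weakly in $W$ along a further subsequence. The limit is identified by passing to the limit in the weak formulation and invoking uniqueness from Theorem~\ref{thm:well_posedness}. Since $S$ is continuous and linear, $Su_k\rightharpoonup Su(f^*)$ weakly in $V$.

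\textbf{Step 2: lower bound.} The target converges strongly, $\Xi_d^{n_k}\to\Xi_d$ in $V$, so $Su_k-\Xi_d^{n_k}\rightharpoonup Su(f^*)-\Xi_d$ weakly in $V$. Weak lower semicontinuity of the norm, together with that of $f\mapsto(Nf,f)_{L^2(Q)}$ (convex and continuous), gives
\[
J(f^*)\le\liminf_{k\to\infty}J_{n_k}(f^{*n_k}).
\]

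\textbf{Step 3: optimality of $f^*$.} Fix any $g\in\mathcal{U}_{ad}$. By optimality of $f^{*n_k}$ we have $J_{n_k}(f^{*n_k})\le J_{n_k}(g)$. Since $u(g)$ is fixed, $J_{n_k}(g)\to J(g)$ because $\|Su(g)-\Xi_d^{n_k}\|_V\to\|Su(g)-\Xi_d\|_V$. Combining this with Step 2,
\[
J(f^*)\le\liminf_k J_{n_k}(f^{*n_k})\le\lim_k J_{n_k}(g)=J(g).
\]
Hence $f^*$ is optimal for the limit problem.

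\textbf{Main obstacle.} The only delicate point is Step 1, namely passing to the limit in the quadratic gradient term $b(u_k)u_{k,x}^2$ under merely weak convergence of the controls. This is settled exactly as in Theorem~\ref{thm:existence_optimal}, through strong convergence of $u_{k,x}$ in $L^2(0,T;L^4(I))$. Everything else is bookkeeping of a strongly convergent perturbation entering a weakly lower semicontinuous functional. Taking $g=f^*$ in Step 3 also gives $J_{n_k}(f^{*n_k})\to J(f^*)$, so the optimal values converge as well.
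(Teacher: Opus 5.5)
Your proof is correct and takes the same route as the paper's: boundedness and weak closedness of $\mathcal{U}_{ad}$, identification of the limit state exactly as in the proof of Theorem~\ref{thm:existence_optimal}, and weak lower semicontinuity combined with the strong convergence $\Xi_d^n\to\Xi_d$. Your Step~3, which compares with a fixed $g\in\mathcal{U}_{ad}$ and uses $J_{n_k}(g)\to J(g)$, makes explicit the upper-bound half of the argument that the paper's two-line proof leaves implicit.
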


\begin{proof}
Each $f^{*n}\in\mathcal{U}_{ad}$ is bounded; extract a weakly convergent subsequence as in the proof above. The strong convergence of $\Xi_d^n$ in $V$ together with weak lower semicontinuity then identifies the weak limit as an optimal control for the limit problem.
\end{proof}


In this section, we have established the existence of at least one solution to the optimal control problem \eqref{cost_functional}. The existence result provides the foundation for the characterization of optimal controls in the next section, where we will derive first-order necessary optimality conditions.

\section{Characterization of optimal controls} \label{sec:characterization}
We now specialize the optimal control problem to a concrete setting relevant for applications. Consider the observation space
\[
V = L^2(Q) \times L^2(I)
\]
with the inner product
\[
\langle (u_1, u_2), (v_1, v_2) \rangle_V = (u_1, v_1)_{L^2(Q)} + (u_2, v_2)_{L^2(I)}.
\]
Let $u_d \in L^2(Q)$ be a desired state trajectory, $u_d^T \in L^2(I)$ a desired final state, and let $L \in \mathcal{L}(L^2(Q))$, $D \in \mathcal{L}(L^2(I))$ be continuous linear operators. We define the target
\[
\Xi_d = (L u_d, D u_d^T) \in V,
\]
and the observation operator
\[
S u = (L u, D u(T, \cdot))
\]
(noting that $u(T,\cdot) \in H^1_0(I)$ for $u \in W$). Taking $N = \lambda I$ with $\lambda > 0$, the cost functional becomes
\begin{equation} \label{cost_special}
J(f) = \frac{1}{2} \|L(u - u_d)\|_{L^2(Q)}^2 + \frac{1}{2} \|D(u(T, \cdot) - u_d^T)\|_{L^2(I)}^2 + \frac{\lambda}{2} \|f\|_{L^2(Q)}^2.
\end{equation}
Throughout this section we additionally assume:
\begin{enumerate}
\item[(\textbf{A6})] The desired final state is compatible with the boundary, $u_d^T \in H^1_0(I)$; the operator $D$ preserves this regularity, $D^*D \in \mathcal{L}(H^1_0(I))$; and $L^*L \in \mathcal{L}(L^2(Q))$.
\end{enumerate}
\noindent
Assumption (\textbf{A6}) ensures that the terminal datum of the adjoint system,
$\varphi(T,\cdot) = D^*D(u(T,\cdot)-u_d^T)$, belongs to $H^1_0(I)$: since $u(T,\cdot) \in H^1_0(I)$ for $u \in W$ and $u_d^T \in H^1_0(I)$, the difference $u(T,\cdot)-u_d^T$ lies in $H^1_0(I)$, which $D^*D$ maps into itself. Hence Lemma~\ref{lem:adjoint_system} applies. Concrete examples of operators $D$ satisfying (\textbf{A6}) include the following:
\begin{itemize}
    \item[(a)] \emph{Direct terminal tracking.} $D = \mathrm{Id}$. Then $D^*D = \mathrm{Id} \in \mathcal{L}(H^1_0(I))$ trivially, and the corresponding cost term penalizes the $L^2$-distance of $u(T,\cdot)$ to the prescribed final state $u_d^T$. This is the standard terminal-tracking case, admissible precisely because $u_d^T \in H^1_0(I)$.
    \item[(b)] \emph{Smoothing observation.} $D = (\mathrm{Id} - \Delta_{\rm Dir})^{-s/2}$ for any $s > 0$ (with $\Delta_{\rm Dir}$ the Dirichlet Laplacian on $I$). Then $D^*D = (\mathrm{Id}-\Delta_{\rm Dir})^{-s}$ maps $H^1_0(I)$ continuously into itself (indeed into a smoother space), so (\textbf{A6}) holds and the cost term penalizes a smoothed version of $u(T,\cdot)$.
    \item[(c)] \emph{Trivial case.} $D = 0$, i.e.\ no terminal tracking, satisfies $D^*D = 0 \in \mathcal{L}(H^1_0(I))$ automatically. The analysis then reduces to pure trajectory tracking, and the regularity of $u_d^T$ is immaterial.
\end{itemize}
A natural sufficient condition for (\textbf{A6}) on $L^*L$ is that $L \in \mathcal{L}(L^2(Q))$ is bounded, which is automatic in the standard tracking case $L = \mathrm{Id}$.
\begin{remark}[Sign convention] \label{rem:sign}
The sign convention adopted here is $\varphi(T,\cdot) = D^*D(u(T,\cdot)-u_d^T)$; some references use the opposite sign. This convention is compatible with the Lagrangian
\[
\mathcal L(u,f,\varphi) = J(f) - \int_Q \varphi\big(u_t - (a(u)u_x)_x + b(u)u_x^2 - f\big)\,d(t,x),
\]
for which stationarity in $u$ produces \eqref{adjoint_system} with terminal datum $\varphi(T) = D^*D(u(T)-u_d^T)$, while stationarity in $f$ yields the variational inequality $(\varphi+\lambda f^*, g - f^*)_{L^2(Q)}\geq 0$. With this choice, the gradient and projection formula are $\nabla \widehat J(f) = \varphi(f) + \lambda f$ and $f^* = P_{\mathcal{U}_{ad}}(-\varphi/\lambda)$.
\end{remark}

\subsection*{Adjoint and linearized adjoint systems}

The derivation of optimality conditions requires the introduction of an adjoint system.

\begin{lemma}[Well-posedness of the adjoint system] \label{lem:adjoint_system}
Let $u \in W$ be the solution from Theorem~\ref{thm:well_posedness}, $G \in L^2(Q)$, and $\varphi^T \in H^1_0(I)$. Then the adjoint system
\begin{equation} \label{adjoint_system}
\begin{cases}
-\varphi_t - a(u)\varphi_{xx} - 2(b(u)u_x \varphi)_x + b'(u)u_x^2 \varphi = G, & \text{in } Q, \\
\varphi = 0, & \text{on } \Sigma, \\
\varphi(T, x) = \varphi^T(x), & \text{for } x \in I,
\end{cases}
\end{equation}
admits a unique strong solution $\varphi \in W$ satisfying
\[
\|\varphi\|_W \leq C\big(\|G\|_{L^2(Q)} + \|\varphi^T\|_{H^1_0(I)}\big),
\]
where $C$ depends on $\|u\|_W$.
\end{lemma}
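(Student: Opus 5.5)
First reverse time: set $\psi(t,x):=\varphi(T-t,x)$ and $\tilde u(t,x):=u(T-t,x)$. Then $\psi$ solves the forward problem
\[
\psi_t - a(\tilde u)\psi_{xx} - 2(b(\tilde u)\tilde u_x\psi)_x + b'(\tilde u)\tilde u_x^2\psi = \tilde G,\qquad \psi|_\Sigma=0,\qquad \psi(0)=\varphi^T .
\]
This has the same structure as the linearized system \eqref{linearized_system}, but in non-divergence form. The coefficients $a(\tilde u)$, $b(\tilde u)$, $b'(\tilde u)$ are bounded, because $W\hookrightarrow L^\infty(Q)$. I will construct $\psi$ by the Galerkin method on the Dirichlet eigenfunctions $\{\psi_j\}$, exactly as in Appendix~\ref{B}, and derive estimates that are uniform in the Galerkin index. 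Passing to the limit is linear, and uniqueness follows by applying the first estimate to the difference of two solutions with zero data. The tools are the pointwise bounds (i)--(ii) and \eqref{GN_sharp_uniform}, i.e.\ $\|u_x\|_{L^\infty}^2\le C\|u_{xx}\|$, together with $\|\psi\|_{L^\infty}\le C\|\psi_x\|$.

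\textbf{Estimate I (test with $\psi$).} Rewrite $-a\psi_{xx}=-(a\psi_x)_x+a'(\tilde u)\tilde u_x\psi_x$. Integrating by parts gives
\[
\tfrac12\tfrac{d}{dt}\|\psi\|^2+a_0\|\psi_x\|^2 .
\]
The remainder terms $\int a'\tilde u_x\psi_x\psi$ and $2\int b\tilde u_x\psi\psi_x$ are bounded by
\[
\epsilon_1\|\psi_x\|^2+C\|\tilde u_{xx}\|^2\|\psi\|^2 .
\]
For the last term, $\int b'\tilde u_x^2\psi^2\le C\|\tilde u_x\|^2\|\psi\|_{L^\infty}^2\le C\|\psi\|\|\psi_x\|$. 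Since $\|\tilde u_{xx}\|^2\in L^1(0,T)$, Gronwall yields
\[
\|\psi\|_{L^\infty(L^2)}+\|\psi\|_{L^2(H^1_0)}\le C(\|G\|_{L^2(Q)}+\|\varphi^T\|).
\]

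\textbf{Estimate II (test with $-\psi_{xx}$).} This gives
\[
\tfrac12\tfrac{d}{dt}\|\psi_x\|^2+a_0\|\psi_{xx}\|^2 .
\]
Expand $(b\tilde u_x\psi)_x=b'\tilde u_x^2\psi+b\tilde u_{xx}\psi+b\tilde u_x\psi_x$. The term carrying $\tilde u_{xx}\psi$ is bounded by
\[
C\|\tilde u_{xx}\|\|\psi\|_{L^\infty}\|\psi_{xx}\|\le\epsilon_1\|\psi_{xx}\|^2+C\|\tilde u_{xx}\|^2\|\psi_x\|^2 .
\]
The term carrying $\tilde u_x^2\psi$ is bounded by
\[
C\|\tilde u_x\|_{L^\infty}^2\|\psi\|_{L^\infty}\|\psi_{xx}\|\le C\|\tilde u_{xx}\|\|\psi_x\|\|\psi_{xx}\| .
\]
Both are handled as in \eqref{K_1eps}--\eqref{K_4eps}, and $\int G\psi_{xx}$ is treated by Young's inequality. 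Here $\varphi^T\in H^1_0(I)$ is used for the initial value, and Gronwall gives
\[
\sup_t\|\psi_x\|^2+\|\psi\|^2_{L^2(H^2)}\le C(\|G\|^2_{L^2(Q)}+\|\varphi^T\|^2_{H^1_0}).
\]
Reading $\psi_t$ off the equation then places $\psi_t\in L^2(Q)$ with the same bound.

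\textbf{Main obstacle: the time derivative.} The space $W$ requires $\psi_t\in L^2(0,T;H^1_0(I))$. The route used for the linearized system, differentiating in time and testing with $y_t$, needs $G_t\in L^2(Q)$ and $\psi_t(0)\in L^2$. Neither is available: $G$ is only in $L^2(Q)$, and $\varphi^T$ is only in $H^1_0$. I would attempt this step as follows, and I am honestly not sure it closes. Test the equation with $-\psi_{xxt}$, or equivalently with $\psi_t$ after writing it in divergence form, aiming for an $L^2(0,T;H^1_0)$ bound on $\psi_t$ through the parabolic smoothing of $\partial_t-a(\tilde u)\partial_{xx}$. The differentiated coefficient term $a'(\tilde u)\tilde u_t$ is controlled by $\|\tilde u_t\|_{L^\infty}\le C\|\tilde u_t\|_{H^1_0}\in L^2(0,T)$, which is precisely where the regularity $u\in W$ enters, as emphasised in Remark~\ref{rem:strong_solution_choice}. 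If the time-derivative bound cannot be obtained from $G\in L^2(Q)$ alone, I would fall back to an additional hypothesis: assume $G_t\in L^2(Q)$, which holds in the application $G=L^*L(u-u_d)$ when $u_d$ is time-regular, or use a weighted-in-time estimate with weight $(T-t)$. Absent such an extra hypothesis, the plan delivers the bound only in the space of Estimates I--II, namely $L^\infty(H^1_0)\cap L^2(H^2)$ with $\psi_t\in L^2(Q)$, and not the full $W$-norm claimed in the statement.
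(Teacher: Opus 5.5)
Your Estimates I--II are sound, and so is reading $\psi_t$ off the equation. Together they reproduce what Appendix~\ref{C} actually establishes: existence, uniqueness, and the bound in $L^\infty(0,T;H^1_0(I))\cap L^2(0,T;H^2(I))\cap H^1(0,T;L^2(I))$. The paper gets this from its weak-solution step, its test with $-\varphi_{xx}$ and its test with $\varphi_t$. The step you could not close cannot be closed by any argument, because the $W$-estimate in the statement is false under its hypotheses.

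Take $a\equiv 1$, $b\equiv 0$ (admissible in (\textbf{A1})--(\textbf{A2})), $I=(0,\pi)$, $G=0$ and $\varphi^T=\sum_j c_j\sin(jx)$. Then $\varphi(t)=\sum_j c_j e^{-j^2(T-t)}\sin(jx)$ and
\[
\int_0^T\|\varphi_{tx}(t,\cdot)\|^2\,dt=\frac{\pi}{4}\sum_j c_j^2\, j^4\big(1-e^{-2j^2T}\big),\qquad \|\varphi^T_x\|^2=\frac{\pi}{2}\sum_j c_j^2\, j^2 .
\]
So $c_j=j^{-5/2}$ gives $\varphi^T\in H^1_0(I)$ while the unique solution satisfies $\varphi\notin W$. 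Similarly, take $\varphi^T=0$ and $G=g(t)\sin(jx)$ with $g$ oscillating at a frequency much larger than $j^2$. Then $\|\varphi_{tx}\|_{L^2(Q)}\approx j\,\|G\|_{L^2(Q)}$, so no bound in terms of $\|G\|_{L^2(Q)}$ is possible. Your tentative test with $-\psi_{xxt}$ must therefore fail too. A $(T-t)$-weight would repair the terminal-data part but not the $G$-part.

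The paper gets past this point by the route you rejected: it differentiates the equation in time and tests with $\varphi_t$. The resulting bound \eqref{eq:higher_time_estimate} has $\|G\|_{H^1(0,T;L^2(I))}$ on the right. The backward Gronwall argument also needs $\|\varphi_t(T,\cdot)\|$. By the equation this involves $\varphi^T_{xx}$ and $G(T,\cdot)$, and it is not controlled by $\|\varphi^T\|_{H^1_0(I)}$. The concluding estimate then silently replaces all this by $\|G\|_{L^2(Q)}+\|\varphi^T\|_{H^1_0(I)}$. So your diagnosis is correct, and the defect lies in the statement, not only in your plan.

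Of your fallbacks, two repairs work:
\begin{itemize}
\item assume $G\in H^1(0,T;L^2(I))$ and $\varphi^T\in H^2(I)\cap H^1_0(I)$, so the time-differentiated estimate goes through with those norms on the right; or
\item weaken the conclusion to the space delivered by your Estimates I--II.
\end{itemize}
Only the second fits the application $G=L^*L(u-u_d)$, $\varphi^T=D^*D(u(T,\cdot)-u_d^T)$ under (\textbf{A6}). It also appears to be all that is used later: Step~2b of Theorem~\ref{thm:optimality_conditions} and Appendix~\ref{D} only invoke $\|\varphi\|_{L^\infty(0,T;H^1_0(I))}$ and $\|\varphi_{xx}\|_{L^2(Q)}$.
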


\begin{proof}
The proof is a backward analogue of the linearized-system analysis and proceeds via energy estimates and Galerkin approximations; see Appendix~\ref{C} for details.
\end{proof}


For the uniqueness analysis below, we need to consider the linearization of the adjoint mapping.

\begin{lemma}[Linearized adjoint system] \label{lem:linearized_adjoint}
Let $u, \varphi \in W$ be as above, $y \in W$, $F \in L^2(Q)$, and $\psi^T \in H^1_0(I)$. The system
\begin{equation} \label{linearized_adjoint}
\begin{cases}
-\psi_t - a'(u)y \varphi_{xx} - a(u)\psi_{xx} - 2(b'(u)y u_x \varphi + b(u)y_x \varphi + b(u)u_x \psi)_x \\
\quad + b''(u)y u_x^2 \varphi + 2b'(u)u_x y_x \varphi + b'(u)u_x^2 \psi = F, & \text{in } Q, \\
\psi = 0, & \text{on } \Sigma, \\
\psi(T, x) = \psi^T(x), & \text{for } x \in I,
\end{cases}
\end{equation}
admits a unique weak solution $\psi \in H$, with the estimate
\[
\|\psi\|_H \leq C\big(\|F\|_{L^2(Q)} + \|\psi^T\|_{H^1_0(I)} + \|y\|_W\big),
\]
where $C$ depends on $\|u\|_W$ and $\|\varphi\|_W$.
\end{lemma}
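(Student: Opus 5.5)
Observe that \eqref{linearized_adjoint} is a \emph{linear} backward equation for $\psi$. Its principal part is the same as in the adjoint system \eqref{adjoint_system}, while the terms involving $y$ and $\varphi$ are known data. Accordingly, we rewrite it as
\[
-\psi_t - a(u)\psi_{xx} - 2(b(u)u_x\psi)_x + b'(u)u_x^2\psi = F + R(y,\varphi),
\]
where
\[
R := a'(u)y\varphi_{xx} + 2\big(b'(u)yu_x\varphi + b(u)y_x\varphi\big)_x - b''(u)yu_x^2\varphi - 2b'(u)u_xy_x\varphi .
\]
The plan has two steps. First, we show $R\in L^2(0,T;H^{-1}(I))$ with $\|R\|_{L^2(H^{-1})}\le C\|y\|_W$, where $C$ depends on $\|u\|_W$ and $\|\varphi\|_W$. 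Second, we solve the resulting backward problem in $H$ by Galerkin approximation and energy estimates, in the spirit of Appendix~\ref{C}, but only at the $H$-level: the divergence-form part of $R$ rules out $W$-regularity in general. Reversing time $t\mapsto T-t$ turns the problem into a forward parabolic problem with zero-order terms and an $H^{-1}$ source, whose weak formulation is tested against $\phi\in H$.

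For the bound on $R$, we use the one-dimensional embeddings of Remark~\ref{rem:regularity_embeddings}: $u,\varphi,y\in L^\infty(Q)$, the coefficients $a',b,b',b''$ evaluated at $u$ are bounded, $u_x,\varphi_x,y_x\in L^\infty(0,T;L^2)\cap L^2(0,T;L^\infty)$, and $\varphi_{xx}\in L^2(Q)$. The terms are then handled as follows.
\begin{itemize}
\item $\|a'(u)y\varphi_{xx}\|_{L^2(Q)}\le C\|y\|_{L^\infty(Q)}\|\varphi\|_W$.
\item The divergence terms are bounded in $L^2(0,T;H^{-1})$ by the $L^2(Q)$ norms of $b'(u)yu_x\varphi$ and $b(u)y_x\varphi$, each of which is at most $C\|y\|_W$ (for instance $\|y_x\varphi\|_{L^2(Q)}\le\|\varphi\|_{L^\infty}\|y_x\|_{L^2(Q)}$).
\item For $b''(u)yu_x^2\varphi$, we use $\|u_x\|_{L^4(I)}^2\le\|u_x\|\,\|u_x\|_{L^\infty}\le C\|u_{xx}\|$, via \eqref{GN_sharp_uniform}; this places the term in $L^2(Q)$.
\item $b'(u)u_xy_x\varphi$ is bounded by $\|u_x\|_{L^2(L^\infty)}\|y_x\|_{L^\infty(L^2)}\|\varphi\|_{L^\infty}$, hence lies in $L^2(Q)$.
\end{itemize}
Altogether $\|F+R\|_{L^2(H^{-1})}\le \|F\|_{L^2(Q)}+C\|y\|_W$.

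For the energy estimate on the Galerkin approximants $\psi_n$ (in reversed time, with $\psi_n(0)$ the projection of $\psi^T$), we test with $\psi_n$. The principal term $-\int a(u)\psi_{xx}\psi$ integrates by parts to $\int a(u)\psi_x^2+\int a'(u)u_x\psi_x\psi$. The first piece gives coercivity $a_0\|\psi_x\|^2$. The second is bounded by $\|u_x\|_{L^\infty}\|\psi_x\|\,\|\psi\|\le \eta\|\psi_x\|^2+C\|u_{xx}\|^2\|\psi\|^2$. Likewise, $2\int b(u)u_x\psi\psi_x$ is absorbed the same way, and $\int b'(u)u_x^2\psi^2\le C\|u_x\|^2_{L^\infty}\|\psi\|^2$. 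The source contributes $\langle F+R,\psi\rangle\le \eta\|\psi_x\|^2+C\|F+R\|_{H^{-1}}^2$. Since the Gronwall coefficient $1+\|u_{xx}\|^2$ is in $L^1(0,T)$, we obtain uniform bounds on $\psi_n$ in $L^\infty(L^2)\cap L^2(H^1_0)$. Reading off the equation, $\psi_{n,t}$ is then bounded in $L^2(H^{-1})$, because every operator term maps $L^2(H^1_0)$ into $L^2(H^{-1})$ with $L^1$-in-time-integrable coefficients (for example $a(u)\psi_{xx}=(a(u)\psi_x)_x-a'(u)u_x\psi_x$, where $\|u_x\psi_x\|_{L^1(I)}\le\|u_x\|\,\|\psi_x\|$ and $L^1(I)\hookrightarrow H^{-1}(I)$ in one dimension). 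We then pass to the limit by weak compactness, which is possible because the equation is linear. Uniqueness follows by applying the same estimate to the difference of two solutions, which has zero data, together with Gronwall. This yields the stated bound on $\|\psi\|_H$ (the terminal term is in fact bounded by $\|\psi^T\|_{L^2}\le C\|\psi^T\|_{H^1_0}$).

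The main obstacle is the non-divergence principal term $a(u)\psi_{xx}$ at the $H$-level of regularity. Specifically, one must justify that $\partial_t\psi$ lies in $L^2(0,T;H^{-1})$ and that the limit passage is valid when $a'(u)u_x$ belongs only to $L^2(0,T;L^\infty)$. The same issue arises in the term $b'(u)u_x^2\psi$, whose coefficient is integrable in time only with the sharp interpolation \eqref{GN_sharp_uniform}. I would first try rewriting every term in divergence form plus lower-order terms, and then verify each product using the one-dimensional embeddings as above.
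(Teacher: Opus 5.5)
Your proposal is correct and follows essentially the route of the paper's proof of Lemma~\ref{lem:linearized_adjoint} in Appendix~\ref{D}: Galerkin approximation, testing with $\psi_n$, Gronwall with an $L^1(0,T)$ coefficient built from $\|u_{xx}\|^2$, a duality bound on $\psi_t$ in $L^2(0,T;H^{-1}(I))$, a weak limit, and uniqueness by linearity. The difference is organizational: you bound the $(y,\varphi)$-terms once as a source $R$ in $L^2(0,T;H^{-1}(I))$, so your Gronwall coefficient is just $1+\|u_{xx}\|^2$, whereas the paper estimates those terms one by one inside the energy identity and puts $\|\varphi_{xx}\|^2$ into the coefficient. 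Incidentally, your aside that the divergence terms rule out higher regularity is not needed, and in fact $R\in L^2(Q)$ once the derivatives are expanded using $y,\varphi\in W$.
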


\begin{proof}
See Appendix~\ref{D}.
\end{proof}


We now present the main result of this section, which provides a complete characterization of optimal controls.

\begin{theorem}[Optimality conditions] \label{thm:optimality_conditions}
Let $f^* \in \mathcal{U}_{ad}$ be an optimal control for problem \eqref{cost_special}, with associated state $u^* := u(f^*) \in W$ and adjoint state $\varphi^* \in W$ obtained from Lemma~\ref{lem:adjoint_system} with
\[
G = L^*L(u^* - u_d), \qquad \varphi^T = D^*D(u^*(T,\cdot) - u_d^T).
\]
Then $f^*$ satisfies the projection formula
\begin{equation}\label{projection_formula}
f^* = P_{\mathcal{U}_{ad}}\!\left(-\frac{\varphi^*}{\lambda}\right),
\end{equation}
where $P_{\mathcal{U}_{ad}}$ denotes the $L^2(Q)$-orthogonal projection onto $\mathcal{U}_{ad}$.

Moreover, there exists a constant $k_0 > 0$, depending only on the problem data $u_0$, $u_d$, $u_d^T$, $L$, $D$, $T$, $a_0$, $M$, $p$, $q$, $\delta$, $|I|$ (in particular, \emph{not} on the unknown optimal pair),
such that, if $\lambda > k_0$, then $f^*$ is the unique optimal control of \eqref{cost_special}, and the pair $(u^*, \varphi^*)$ is the unique solution of the optimality system
\begin{equation} \label{optimality_system}
\begin{cases}
u_t - (a(u)u_x)_x + b(u)u_x^2 = P_{\mathcal{U}_{ad}}\!\left(-\dfrac{\varphi}{\lambda}\right), & \text{in } Q, \\
-\varphi_t - a(u)\varphi_{xx} - 2(b(u)u_x \varphi)_x + b'(u)u_x^2 \varphi = L^*L(u - u_d), & \text{in } Q, \\
u = 0,\ \varphi = 0, & \text{on } \Sigma, \\
u(0, x) = u_0(x), & \text{for } x \in I, \\
\varphi(T, x) = D^*D(u(T, x) - u_d^T(x)), & \text{for } x \in I.
\end{cases}
\end{equation}
\end{theorem}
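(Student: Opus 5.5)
The proof has two parts: first the first-order condition and projection formula, then uniqueness for large $\lambda$ via a contraction-type argument on the optimality system.

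\emph{Step 1: variational inequality.} Since $\mathcal{U}_{ad}$ is convex, $f^*+\epsilon(g-f^*)\in\mathcal{U}_{ad}$ for every $g\in\mathcal{U}_{ad}$, and optimality together with Theorem~\ref{thm:diff_cost} gives $\langle J'(f^*),g-f^*\rangle\ge 0$. With the special $S$ and $N=\lambda I$ this reads
\[
(L(u^*-u_d),Ly)_{L^2(Q)}+(D(u^*(T)-u_d^T),Dy(T))_{L^2(I)}+\lambda(f^*,g-f^*)_{L^2(Q)}\ge 0,
\]
where $y$ solves \eqref{linearized_system} with $h=g-f^*$.

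\emph{Step 2: eliminate $y$ by duality.} Take $\varphi^*$ from Lemma~\ref{lem:adjoint_system}; (A6) ensures $\varphi^T\in H^1_0(I)$. Multiply \eqref{linearized_system} by $\varphi^*$ and integrate over $Q$. Integrating by parts in $t$ uses $y(0)=0$. In $x$, $\int (a(u)y)_{xx}\varphi=\int a(u)y\varphi_{xx}$ and $\int 2b(u)u_xy_x\varphi=-\int 2(b(u)u_x\varphi)_xy$, which is allowed since both functions lie in $W$ and vanish on $\Sigma$. This gives the identity
\[
(\varphi^*(T),y(T))+\int_Q y\,G=\int_Q(g-f^*)\varphi^*,
\]
so the inequality becomes $(\varphi^*+\lambda f^*,g-f^*)_{L^2(Q)}\ge0$ for all $g\in\mathcal{U}_{ad}$. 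This is exactly the characterization of $f^*=P_{\mathcal{U}_{ad}}(-\varphi^*/\lambda)$ (the $L^2$-projection onto the convex set $\mathcal{U}_{ad}$, which is closed in $L^2(Q)$ by its uniform $L^\infty$ bounds in (A4)). It follows that $(u^*,\varphi^*)$ solves \eqref{optimality_system}.

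\emph{Step 3: uniqueness for $\lambda>k_0$.} Let $(u_i,\varphi_i)$, $i=1,2$, be two solutions of \eqref{optimality_system}, with $f_i=P(-\varphi_i/\lambda)\in\mathcal{U}_{ad}$; every optimal control produces such a pair by Step 2. Since the projection is $1$-Lipschitz in $L^2$,
\[
\|f_1-f_2\|_{L^2(Q)}\le\lambda^{-1}\|\varphi_1-\varphi_2\|_{L^2(Q)}.
\]
The key intermediate bound is $\|\varphi_1-\varphi_2\|_{L^2(Q)}\le K\|f_1-f_2\|_{L^2(Q)}$, with $K$ depending only on data through the uniform bound $\|u_i\|_W\le C\delta$ (Remark~\ref{rem:implication_assumptions}) and the resulting uniform bound on $\|\varphi_i\|_W$ from Lemma~\ref{lem:adjoint_system}. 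Given this bound, $k_0:=K$ forces $f_1=f_2$; then $u_1=u_2$ by Theorem~\ref{thm:well_posedness}, and $\varphi_1=\varphi_2$ by uniqueness in Lemma~\ref{lem:adjoint_system}.

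To get the bound, first estimate $w=u_1-u_2$. The obstacle is that Corollary~\ref{cor:continuity} controls $w$ only by the $H^1(0,T;L^2)$ norm of $f_1-f_2$. I would redo Estimates I--II from the proof of Theorem~\ref{thm:gateaux_differentiability} with $\epsilon=1$, which require only $h\in L^2(Q)$, and obtain $\|w\|_{L^\infty(0,T;H^1_0)}+\|w\|_{L^2(0,T;H^2)}\le C\|f_1-f_2\|_{L^2(Q)}$. Next, $\chi=\varphi_1-\varphi_2$ solves the adjoint equation around $u_1$ with source
\[
L^*Lw-\big[\text{coefficient differences}\big]\varphi_2.
\]
Terms such as $(a(u_1)-a(u_2))\varphi_{2,xx}$, $2\big((b(u_1)u_{1x}-b(u_2)u_{2x})\varphi_2\big)_x$ and $(b'(u_1)u_{1x}^2-b'(u_2)u_{2x}^2)\varphi_2$ are bounded in $L^2(Q)$ (or $L^2(0,T;H^{-1})$) by $C\|w\|_{L^\infty(0,T;H^1)\cap L^2(0,T;H^2)}$, using the mean value bounds (ii), $H^1\hookrightarrow L^\infty$, and \eqref{GN_sharp_uniform}. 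The terminal datum $D^*Dw(T)$ is controlled in $L^2(I)$. A backward energy estimate, testing with $\chi$ in the weak formulation (as in Lemma~\ref{lem:linearized_adjoint}) with Gronwall coefficients in $L^1(0,T)$, then yields $\|\chi\|_{L^2(Q)}\le K\|f_1-f_2\|_{L^2(Q)}$.

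The main obstacle is exactly this step: closing the estimate in $L^2$-type norms only, without the $h_t$-dependent Estimate IV, and handling the divergence-form coefficient difference against the merely $L^2(0,T;H^2)$-regular $\varphi_2$. Both must be done so that all constants depend only on the a priori bounds and not on the unknown optimal pairs.
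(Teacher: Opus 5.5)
Your proof is correct. Your Steps~1--2 coincide with Step~1 of the paper: the variational inequality, followed by the duality identity justified in Appendix~\ref{D2}. The two arguments differ only in how uniqueness is obtained.

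\emph{The paper's route} is second order. It writes $\langle J''(f),(h,h)\rangle=\int_Q(\psi^h+\lambda h)h$, where $\psi^h$ solves the linearized adjoint system of Lemma~\ref{lem:linearized_adjoint]}. This needs the differentiability of $f\mapsto\varphi(f)$ proved in Appendix~\ref{D2}. It then bounds $|\int_Q\psi^h h|\le k_0\|h\|_{L^2(Q)}^2$ uniformly on $\mathcal U_{ad}$ using \eqref{Fprime_L2_bound}. Taylor's formula then gives $J(g)-J(f^*)\ge\frac{\lambda-k_0}{2}\|g-f^*\|_{L^2(Q)}^2$, and uniqueness for the optimality system is read off from the equivalence between the projection formula and the variational inequality.

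\emph{Your route} is first order: an $L^2(Q)$-Lipschitz bound for $f\mapsto\varphi(f)$ on $\mathcal U_{ad}$, combined with non-expansiveness of $P_{\mathcal U_{ad}}$. The step you single out as the main obstacle does close.
\begin{itemize}
\item Estimates~I--II of Theorem~\ref{thm:gateaux_differentiability} with $\epsilon=1$ involve only $\|h\|_{L^2(Q)}$.
\item Since $\varphi_2\in W\hookrightarrow L^\infty(Q)$ uniformly, the divergence-form difference is bounded in $L^2(0,T;H^{-1}(I))$ by $C(\|w\|_{L^\infty(Q)}\|u_{1,x}\|_{L^2(Q)}+\|w_x\|_{L^2(Q)})$.
\item The term $(a(u_1)-a(u_2))\varphi_{2,xx}$ is bounded in $L^2(Q)$ by $C\|w\|_{L^\infty(Q)}\|\varphi_{2,xx}\|_{L^2(Q)}$.
\end{itemize}
A backward energy estimate for $\chi$, with Gronwall coefficient $C(1+\|u_{1,xx}\|^2)\in L^1(0,T)$, then finishes the argument.

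Your approach has three advantages.
\begin{itemize}
\item It avoids the linearized adjoint system and the convergence of adjoint difference quotients.
\item It avoids the $C^2$-regularity of $s\mapsto J(f+s(g-f))$, which the paper justifies only briefly.
\item For $\lambda>K$ the map $f\mapsto P_{\mathcal U_{ad}}(-\varphi(f)/\lambda)$ is a contraction on the complete metric space $(\mathcal U_{ad},\|\cdot\|_{L^2(Q)})$, which gives a convergent fixed-point iteration.
\end{itemize}
You also do not lose the paper's quantitative conclusion. Your Lipschitz bound makes $f\mapsto\varphi(f)+\lambda f$ strongly monotone on $\mathcal U_{ad}$ with constant $\lambda-K$. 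Integrating $J'$ along segments then yields the same quadratic growth estimate.

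One minor correction: closedness of $\mathcal U_{ad}$ in $L^2(Q)$ does not follow from the $L^\infty$ bounds alone. It follows from boundedness in $H^1(0,T;L^2(I))$ (via the $f_t$-bound in (\textbf{A4})) together with weak closedness of closed convex sets in $H^1(0,T;L^2(I))$.
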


\begin{proof}
\noindent\emph{Step 1: Projection formula.}
By Theorem~\ref{thm:diff_cost} and the minimizing property of $f^*$, for all $g \in \mathcal{U}_{ad}$,
\begin{align}
0 &\leqslant \lim_{\epsilon \downarrow 0} \frac{J(f^* + \epsilon(g-f^*)) - J(f^*)}{\epsilon} = \langle J'(f^*), g-f^*\rangle \nonumber\\
&= \big(L^*L(u^*-u_d), y^*\big)_{L^2(Q)} \nonumber\\
&\quad + \big(D^*D(u^*(T,\cdot)-u_d^T), y^*(T,\cdot)\big)_{L^2(I)} \nonumber\\
&\quad + \lambda(f^*, g-f^*)_{L^2(Q)}. \label{CharControlStep1}
\end{align}
where $y^* \in W$ solves the linearized system \eqref{linearized_system} with $h = g-f^*$.

We now derive an alternative expression for the first two terms via the adjoint system. Multiplying the equation for $\varphi^*$ by $y^*$, the equation for $y^*$ by $\varphi^*$, integrating by parts in $Q$, using the boundary conditions $y^*=\varphi^*=0$ on $\Sigma$, $y^*(0,\cdot)=0$ and $\varphi^*(T,\cdot) = D^*D(u^*(T,\cdot)-u_d^T)$, we obtain (a complete derivation is given in Appendix~\ref{D2}, but the key cancellations follow by the formal duality between \eqref{linearized_system} and \eqref{adjoint_system})
\begin{align} \label{CharControlStep2}
&\big(L^*L(u^*-u_d),\, y^*\big)_{L^2(Q)} + \big(D^*D(u^*(T,\cdot)-u_d^T),\, y^*(T,\cdot)\big)_{L^2(I)}\\
&\qquad= (\varphi^*, g-f^*)_{L^2(Q)}.\nonumber
\end{align}

Substituting \eqref{CharControlStep2} into \eqref{CharControlStep1} yields the variational inequality
\begin{equation} \label{FinalChar}
(\varphi^* + \lambda f^*,\, g-f^*)_{L^2(Q)} \geqslant 0 \qquad \text{for all } g \in \mathcal{U}_{ad},
\end{equation}
which is equivalent to the projection formula \eqref{projection_formula}.

\medskip
\noindent\emph{Step 2: Strict convexity for $\lambda > k_0$.}
The argument has two parts: first we obtain a \emph{uniform} second-order coercivity bound at every $f \in \mathcal{U}_{ad}$, then we use Taylor's theorem to deduce strict convexity of $J$ on $\mathcal{U}_{ad}$, and hence uniqueness of the minimizer.

\smallskip
\noindent\emph{Step 2a: Uniform a priori bounds on $u(f)$ and $\varphi(f)$.}
For every $f\in\mathcal{U}_{ad}$, by Theorem~\ref{thm:well_posedness} and (\textbf{A4})--(\textbf{A5}),
\begin{equation} \label{uniform_u_bound}
\|u(f)\|_W \leq C_u, \qquad C_u := C(\|u_0\|_{H^2(I)} + \delta),
\end{equation}
where $C$ depends only on $a_0, M, T, |I|, p, q$.
For the adjoint state $\varphi(f)$ obtained from Lemma~\ref{lem:adjoint_system} with $G = L^*L(u(f)-u_d)$ and $\varphi^T = D^*D(u(f)(T,\cdot)-u_d^T)$, assumption (\textbf{A6}) ensures $\varphi^T \in H^1_0(I)$ and gives
\begin{equation} \label{uniform_phi_bound}
\begin{aligned}
\|\varphi(f)\|_W &\leq C\big(\|L^*L\|_{\mathcal{L}(L^2(Q))}\,\|u(f)-u_d\|_{L^2(Q)} \\
&\qquad + \|D^*D\|_{\mathcal{L}(H^1_0(I))}\,\|u(f)(T)-u_d^T\|_{H^1_0(I)}\big) \leq C_\varphi.
\end{aligned}
\end{equation}
with $C_\varphi$ depending only on the data $(C_u, u_d, u_d^T, L, D)$ and \emph{not} on $f$. Both bounds are therefore uniform in $f \in \mathcal{U}_{ad}$.

\smallskip
\noindent\emph{Step 2b: Uniform second-order bound.}
Fix any $f \in \mathcal{U}_{ad}$ and any direction $h$ such that $f+h \in \mathcal{U}_{ad}$. Let $u = u(f)$, $\varphi = \varphi(f)$, and let $y = \mathcal{F}'(f)h \in W$ solve the linearized system \eqref{linearized_system} with right-hand side $h$. Following the same chain of identities as in Step~1, the second G\^ateaux derivative is
\[
\langle J''(f),(h,h)\rangle = \int_Q \big[\psi^h + \lambda h\big]\,h\,d(t,x),
\]
where $\psi^h \in H$ is the unique weak solution of the linearized adjoint system \eqref{linearized_adjoint} with $y$ as above, $F = L^*Ly$, and $\psi^T = D^*D(y(T,\cdot))$ (the strong $L^2(Q)$-convergence of the adjoint difference quotients to $\psi^h$, used implicitly here, is established in Appendix~\ref{D2} for arbitrary $f \in \mathcal{U}_{ad}$).

By the duality identity (derived exactly as the identity \eqref{CharControlStep2} of Step~1, with $f$ in place of $f^*$ and full justification in Appendix~\ref{D2}),
\begin{align*}
\int_Q \psi^h\, h\,d(t,x) &= \int_Q\big[L^*L y + a'(u)y\,\varphi_{xx} + 2(b'(u)y u_x\varphi + b(u)y_x\varphi)_x \\
&\qquad - b''(u)y u_x^2\varphi - 2b'(u)u_x y_x\varphi\big]\,y\,d(t,x) \;-\; \int_I |Dy(T,x)|^2\,dx \\
&=: I_1 + I_2 + I_3 + I_4 + I_5 + I_6 - \int_I |Dy(T)|^2\,dx.
\end{align*}

We estimate each term using the one-dimensional embeddings $H^1_0(I)\hookrightarrow L^\infty(I)$ and $\|v_x\|_{L^\infty(I)} \leq C\|v_{xx}\|$ for $v \in H^2(I)\cap H^1_0(I)$, the uniform bounds \eqref{uniform_u_bound}--\eqref{uniform_phi_bound}, and -- crucially -- the \emph{sharper} bound \eqref{Fprime_L2_bound} of Theorem~\ref{thm:gateaux_differentiability}, which only requires $h \in L^2(Q)$:
\begin{equation}\label{y_L2_bounds}
\|y\|_{L^\infty(Q)} + \|y\|_{L^\infty(0,T;H^1_0(I))} + \|y\|_{L^2(0,T;H^2(I))} + \|y_t\|_{L^2(Q)} \leq C\,\|h\|_{L^2(Q)}.
\end{equation}

For $I_1$: by Cauchy--Schwarz,
\[
|I_1| = \Big|\int_Q L^*L\,y\cdot y\,d(t,x)\Big| \leq \|L^*L\|_{\mathcal{L}(L^2(Q))}\,\|y\|_{L^2(Q)}^2 \leq C_1\,\|h\|_{L^2(Q)}^2.
\]

For $I_2$,
\[
|I_2| \leq \|y\|_{L^\infty(Q)}\,\|\varphi_{xx}\|_{L^2(Q)}\,\|y\|_{L^2(Q)} \leq C\,C_\varphi\,\|h\|_{L^2(Q)}^2.
\]

For $I_3 + I_4$, integration by parts in $x$ together with $y|_\Sigma = 0$ yields
\[
I_3 + I_4 = -\int_Q \big(b'(u)y u_x \varphi + b(u) y_x\varphi\big)\,y_x\,d(t,x).
\]
Bounding the integrand pointwise in $t$ and using $\|y\|_{L^\infty(I)},\|\varphi\|_{L^\infty(I)}\leq C\|y_x\|,\|\varphi_x\|$, $\|u_x\|_{L^\infty(I)}\leq C\|u_{xx}\|$, we get
\begin{align*}
|I_3 + I_4| &\leq C\int_0^T \big(\|y\|_{L^\infty(I)}\|u_x\|_{L^\infty(I)}\|\varphi\|_{L^\infty(I)} + \|y_x\|\|\varphi\|_{L^\infty(I)}\big)\|y_x\|\,dt \\
&\leq C\,\|y\|_{L^\infty(0,T;H^1_0(I))}\,\|\varphi\|_{L^\infty(0,T;H^1_0(I))}\,\big(\|u\|_{L^2(0,T;H^2)} + \sqrt{T}\big)\,\|y_x\|_{L^2(Q)}\\
&\leq C\,(C_u+1)\,C_\varphi\,\|h\|_{L^2(Q)}^2,
\end{align*}
where in the last step we used \eqref{y_L2_bounds} and the embedding $W \hookrightarrow C([0,T];H^1_0(I))$ to bound $\|\varphi\|_{L^\infty(0,T;H^1_0)}\leq C\|\varphi\|_W \leq C\,C_\varphi$.

For $I_5$,
\begin{align*}
|I_5| &\leq C\int_0^T \|y\|_{L^\infty(I)}^2\,\|u_x\|_{L^\infty(I)}^2\,\|\varphi\|_{L^\infty(I)}\,dt \\
&\leq C\,\|y\|_{L^\infty(Q)}^2\,\|\varphi\|_{L^\infty(0,T;H^1_0(I))}\,\|u\|_{L^2(0,T;H^2(I))}^2 \\
&\leq C\,C_\varphi\,C_u^2\,\|h\|_{L^2(Q)}^2.
\end{align*}

For $I_6$, similarly,
\begin{align*}
|I_6| &\leq C\int_0^T \|u_x\|_{L^\infty(I)}\,\|y_x\|\,\|\varphi\|_{L^\infty(I)}\,\|y\|_{L^\infty(I)}\,dt\\
&\leq C\,\|y\|_{L^\infty(Q)}\,\|\varphi\|_{L^\infty(0,T;H^1_0(I))}\,\|u\|_{L^2(0,T;H^2(I))}\,\|y_x\|_{L^2(Q)}\\
&\leq C\,C_\varphi\,C_u\,\|h\|_{L^2(Q)}^2.
\end{align*}

Combining all estimates and noting $\int_I |Dy(T,x)|^2\,dx \geq 0$,
\begin{equation}\label{psi_estimate}
\Big|\int_Q \psi^h\,h\,d(t,x)\Big| \leq k_0\,\|h\|_{L^2(Q)}^2,
\end{equation}
where
\[
k_0 = k_0\big(T,\,a_0,\,M,\,C_u,\,C_\varphi,\,\|L\|,\,\|D^*D\|_{\mathcal{L}(H^1_0)},\,p,\,q\big)
\]
is a constant that depends only on the problem data and on the uniform bounds $C_u, C_\varphi$ from \eqref{uniform_u_bound}--\eqref{uniform_phi_bound}, but \emph{not on the particular point} $f \in \mathcal{U}_{ad}$ where the second derivative is computed. We conclude that
\begin{equation} \label{uniform_J''_bound}
\langle J''(f),(h,h)\rangle \geq (\lambda - k_0)\,\|h\|_{L^2(Q)}^2 \qquad \forall\, f \in \mathcal{U}_{ad},\ \forall\, h \text{ with } f+h \in \mathcal{U}_{ad}.
\end{equation}

\smallskip
\noindent\emph{Step 2c: Uniqueness via Taylor's theorem.}
Suppose $\lambda > k_0$ and let $f, g \in \mathcal{U}_{ad}$. By the convexity of $\mathcal{U}_{ad}$, the segment $[f,g]_s := f + s(g-f)$ lies in $\mathcal{U}_{ad}$ for every $s \in [0,1]$. The map $s \mapsto J([f,g]_s)$ is twice continuously differentiable: indeed, the differentiability of $f \mapsto u(f)$ at every $f \in \mathcal{U}_{ad}$ is given by Theorem~\ref{thm:gateaux_differentiability}, and the differentiability of $f \mapsto \varphi(f)$ at every $f \in \mathcal{U}_{ad}$ follows from the same argument as in Appendix~\ref{D2}, applied with $f$ in place of $f^*$ as the base point (the proof uses only that $f^* \in \mathcal{U}_{ad}$, never the optimality of $f^*$, and the ingredients --- Lipschitz continuity of $f \mapsto u(f)$, well-posedness of the adjoint and linearized adjoint systems, Mean-Value Theorem --- are all available uniformly on $\mathcal{U}_{ad}$). Taylor's theorem with integral remainder then gives
\[
J(g) = J(f) + \langle J'(f), g-f\rangle + \int_0^1 (1-s)\,\langle J''([f,g]_s),(g-f, g-f)\rangle\,ds.
\]
Applying \eqref{uniform_J''_bound} pointwise in $s$ (with $f$ replaced by $[f,g]_s$ and $h$ replaced by $g-f$), we obtain
\begin{equation} \label{strict_convexity_estimate}
J(g) \geq J(f) + \langle J'(f), g-f\rangle + \frac{\lambda - k_0}{2}\,\|g-f\|_{L^2(Q)}^2.
\end{equation}
If $f^*$ minimizes $J$ over $\mathcal{U}_{ad}$, then $\langle J'(f^*), g-f^*\rangle \geq 0$ for every $g \in \mathcal{U}_{ad}$, and \eqref{strict_convexity_estimate} with $f = f^*$ yields
\[
J(g) - J(f^*) \geq \frac{\lambda - k_0}{2}\,\|g-f^*\|_{L^2(Q)}^2 > 0 \qquad \text{whenever } g \neq f^*.
\]
Hence $f^*$ is the unique minimizer of $J$ on $\mathcal{U}_{ad}$.

The fact that $(u^*, \varphi^*)$ is then the unique solution of the optimality system \eqref{optimality_system} follows from the equivalence between the projection formula and the variational inequality, together with the well-posedness of the state and adjoint systems (Theorem~\ref{thm:well_posedness} and Lemma~\ref{lem:adjoint_system}).
\end{proof}

\begin{remark}[Dependence of $k_0$ on the data] \label{rem:dependence_k}
The threshold $k_0$ depends on the data only through the a priori bounds $C_u$ and $C_\varphi$ from \eqref{uniform_u_bound}--\eqref{uniform_phi_bound}. Explicitly,
\begin{align*}
C_u &\leq C\big(\|u_0\|_{H^2(I)} + \delta\big), \\
C_\varphi &\leq C\Big(\|L^*L\|_{\mathcal{L}(L^2(Q))}\big(C_u + \|u_d\|_{L^2(Q)}\big) + \|D^*D\|_{\mathcal{L}(H^1_0(I))}\big(C_u + \|u_d^T\|_{H^1_0(I)}\big)\Big),
\end{align*}
where the bound on $C_\varphi$ uses Lemma~\ref{lem:adjoint_system} together with $\varphi^T \in H^1_0(I)$ supplied by assumption (\textbf{A6}). Thus $k_0$ depends only on the problem data $u_0$, $u_d$, $u_d^T$, $L$, $D$, $T$, $a_0$, $M$, $p$, $q$, $\delta$, $|I|$, and not on the unknown optimal pair $(u^*, \varphi^*, f^*)$.
\end{remark}

\begin{remark}[Interpretation of optimality system] \label{rem:optimality_interpretation}
The optimality system \eqref{optimality_system} exhibits the characteristic three-part structure typical of optimal control problems for parabolic PDEs: the state equation evolves forward in time and describes the underlying physical dynamics; the adjoint equation evolves backward in time and encodes sensitivity information about how changes in the state affect the cost functional; and the projection formula establishes an explicit relationship between the optimal control and the adjoint variable.
\end{remark}

\begin{remark}[Role of the regularization parameter] \label{rem:lambda_role}
The condition $\lambda > k_0$ ensures the strict convexity of $J$ on $\mathcal{U}_{ad}$. In practice, $\lambda$ is interpreted as a Tikhonov regularization parameter that balances the tracking objective against the control cost. Large values of $\lambda$ lead to smaller control magnitudes but potentially poorer tracking performance.

Unlike linear-quadratic problems, where uniqueness holds for every $\lambda > 0$, the threshold $\lambda > k_0$ in Theorem~\ref{thm:optimality_conditions} reflects the non-convexity of $f \mapsto J(f)$ induced by the quasilinear state equation \eqref{MainModel}: the second variation contains lower-order curvature contributions through $\psi$ that can be negative, and only when $\lambda$ is large enough does the convex Tikhonov term dominate. As made explicit in Remark~\ref{rem:dependence_k}, the constant $k_0$ depends only on the data (in particular on the smallness parameter $\delta$ via the energy bound $C_u$ on the state, and on the size of the targets $u_d, u_d^T$ via the adjoint bound $C_\varphi$), and \emph{not} on the unknown optimal pair. The bound $k_0$ produced by our analysis is conservative; the experience of related works \cite{bonifacius2018second,bonifacius2025optimal} suggests that the practical threshold can be significantly smaller than the worst-case constant obtained by tracking the chain of energy estimates.
\end{remark}


The characterization theorem has several important consequences.

\begin{corollary}[Gradient formula] \label{cor:gradient}
Under the hypotheses of Theorem~\ref{thm:optimality_conditions}, the gradient of the reduced cost functional $\widehat{J}(f) := J(f)$ (with the constraint $u = u(f)$) is given by
\[
\nabla \widehat{J}(f) = \varphi(f) + \lambda f,
\]
where $\varphi(f)$ solves the adjoint system with state $u(f)$.
\end{corollary}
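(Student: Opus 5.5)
The gradient formula is a Riesz representation of the derivative of $J$ from Theorem~\ref{thm:diff_cost}, rewritten with the duality identity \eqref{CharControlStep2}; nothing in that argument uses optimality. Fix an arbitrary $f \in \mathcal{U}_{ad}$ (not necessarily optimal) and a direction $g \in \mathcal{U}_{ad}$, and set $h = g - f$. In the concrete setting \eqref{cost_special} with $S u = (Lu, D u(T,\cdot))$ and $N = \lambda I$, Theorem~\ref{thm:diff_cost} gives
\[
\langle J'(f), h\rangle = \big(L^*L(u(f)-u_d), y\big)_{L^2(Q)} + \big(D^*D(u(f)(T,\cdot)-u_d^T), y(T,\cdot)\big)_{L^2(I)} + \lambda (f, h)_{L^2(Q)},
\]
where $y = \mathcal{F}'(f)h \in W$ solves \eqref{linearized_system}.

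\textbf{Step 1: construct the adjoint state at $f$.} Define $\varphi(f) \in W$ by Lemma~\ref{lem:adjoint_system} with the data
\[
G = L^*L(u(f)-u_d), \qquad \varphi^T = D^*D(u(f)(T,\cdot)-u_d^T).
\]
Assumption (\textbf{A6}) guarantees $\varphi^T \in H^1_0(I)$, so the lemma applies at every $f \in \mathcal{U}_{ad}$. This is exactly the observation already made in Step~2a of the proof of Theorem~\ref{thm:optimality_conditions}.

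\textbf{Step 2: the duality identity at a non-optimal $f$.} This is the main point to check: that \eqref{CharControlStep2} remains valid with $f$ in place of $f^*$. I will repeat the Step~1 computation of that proof. Test the adjoint equation with $y$ and the linearized equation with $\varphi(f)$, integrate over $Q$, and integrate by parts in $t$ and $x$. The boundary terms reduce to $\big(\varphi(f)(T,\cdot), y(T,\cdot)\big)_{L^2(I)}$ because $y(0,\cdot)=0$ and both $y$ and $\varphi(f)$ vanish on $\Sigma$. The regularity $y, \varphi(f) \in W$ is enough to justify every integration by parts. The principal part $-a(u)\varphi_{xx}$ is dual to $-(a(u)y)_{xx}$. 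The gradient term $-2(b(u)u_x\varphi)_x$ is dual to $2b(u)u_x y_x$. The zero-order terms $b'(u)u_x^2$ coincide. Since none of these manipulations uses optimality, this is the same reasoning already invoked for Step~2b and in Appendix~\ref{D2}. The outcome is
\[
\big(L^*L(u(f)-u_d), y\big)_{L^2(Q)} + \big(D^*D(u(f)(T,\cdot)-u_d^T), y(T,\cdot)\big)_{L^2(I)} = \big(\varphi(f), h\big)_{L^2(Q)}.
\]

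\textbf{Step 3: identify the gradient.} Substituting the identity of Step~2 into the derivative formula gives
\[
\langle J'(f), g-f\rangle = \big(\varphi(f) + \lambda f,\, g-f\big)_{L^2(Q)} \qquad \text{for all } g \in \mathcal{U}_{ad}.
\]
This identifies $\nabla \widehat J(f) = \varphi(f) + \lambda f$ as the $L^2(Q)$-Riesz representative of $J'(f)$. Note that $\varphi(f) \in W \subset L^2(Q)$, so the representative lies in $L^2(Q)$.

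\textbf{Directions and the main obstacle.} The formula is a priori only tested on feasible directions $g - f$. It extends by linearity and continuity to all $h \in H^1(0,T;L^2(I))$ whenever $\mathcal{F}'(f)$ is defined on them, because both sides are continuous in $h$ in the $L^2(Q)$ norm by \eqref{Fprime_L2_bound}. The only real obstacle is therefore the rigorous justification of the duality identity for an arbitrary base point; I will handle it by invoking Appendix~\ref{D2}, noting as in Step~2c of Theorem~\ref{thm:optimality_conditions} that its proof never uses the optimality of $f^*$.
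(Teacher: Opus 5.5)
Your proposal is correct and is essentially the paper's own (implicit) argument: the corollary is stated without a separate proof and follows from Theorem~\ref{thm:diff_cost} combined with the duality identity \eqref{CharControlStep2} (Step~1 of Theorem~\ref{thm:optimality_conditions} and Step~4 of Appendix~\ref{D2}), which, as you note, never uses optimality and therefore holds at every $f \in \mathcal{U}_{ad}$. One minor wording point: directions outside $\mathcal{U}_{ad}-f$ are not reached ``by linearity and continuity'' from the feasible ones, since their span need not be dense. They are reached directly, because your Step~2 computation uses only that $y$ solves \eqref{linearized_system} with right-hand side $h$, and Lemma~\ref{lem:linearized_system} provides such a $y$ for every $h \in H^1(0,T;L^2(I))$.
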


\section{Conclusions and future perspectives} \label{sec:conclusions}

This work has established a comprehensive theoretical framework for the optimal control of quasilinear parabolic equations modeling nonlinear heat conduction phenomena in one spatial dimension. Our main contributions include the development of a well-posedness theory for the controlled system under general nonlinearity assumptions, requiring careful energy estimates to handle the challenging quadratic gradient term. We established the G\^ateaux differentiability of the control-to-state mapping through the analysis of a non-standard linearized system, and proved existence and uniqueness results for optimal controls characterized by a coupled forward-backward optimality system.

The one-dimensional setting was essential to our analysis, permitting the use of strong Sobolev embeddings, compactness results via the Aubin--Lions lemma, and enhanced regularity properties that enabled rigorous treatment of the nonlinear terms. While these results provide a solid foundation, certain limitations remain, including the local nature of well-posedness requiring smallness conditions and the dependence on sufficient regularization for uniqueness (via the threshold $\lambda > k_0$ of Theorem~\ref{thm:optimality_conditions}). The extension of the present framework to $d=2$ and $d=3$ stands out as the main open problem we leave for future work.

The two- and three-dimensional cases are genuinely open and, in our view, the most interesting direction in which this work can be continued. Several aspects of the present analysis are dimension-independent and should carry over: the structure of the optimality system, the formal computation of the adjoint, the projection formula $f^* = P_{\mathcal{U}_{ad}}(-\varphi^*/\lambda)$, and the second-variation argument leading to the threshold $\lambda > k_0$. What does \emph{not} carry over is the chain of energy estimates: the embedding $H^1(\Omega)\hookrightarrow L^\infty(\Omega)$ fails in $d\geq 2$, and the quadratic gradient term $b(u)|\nabla u|^2$ then sits exactly at the critical regularity threshold of the energy method. A natural starting point is the gradient-constrained framework of \cite{bonifacius2025optimal}, which controls the quadratic gradient term by imposing pointwise (or integrated) constraints on $\nabla u$ in the admissible set. A side question --- of independent interest --- is the behaviour of the smallness threshold $\delta_0$ and of the regularization threshold $k_0$ as one passes from $d=1$ to $d\geq 2$: one would like to understand whether these constants degenerate in a controlled way and, if so, what the resulting effective dimension-dependent theory looks like.

Several other promising research directions emerge from this work. The gradient formula of Corollary~\ref{cor:gradient} makes the optimality system \eqref{optimality_system} amenable to a numerical treatment, which we intend to pursue elsewhere. Related problems worthy of investigation include boundary control formulations, parameter identification frameworks (where the unknown is one or both of the nonlinear coefficients $a, b$), stochastic optimal control incorporating uncertainty in the source $f$ or the initial data $u_0$, and multi-physics coupling with other physical phenomena that share the structural feature of a quadratic gradient nonlinearity.

The interplay between PDE theory, functional analysis, and optimization methods demonstrated in this work highlights the rich mathematical structure of optimal control problems for nonlinear partial differential equations. While the one-dimensional setting provided a tractable framework for developing the core theory, the techniques established here serve as a foundation for addressing more challenging higher-dimensional problems and can be adapted to other nonlinear PDE models arising in physics and engineering.


\section*{Acknowledgments}
The first author gratefully acknowledges the hospitality of the Chair for Dynamics,
Control, Machine Learning and Numerics (Alexander von Humboldt Professorship) at
Friedrich--Alex\-an\-der--Uni\-ver\-si\-t\"at Erlangen--N\"urn\-berg, where part of this
work was carried out, and the support of a CAPES (Brazil) scholarship, grant
\mbox{88887.895023/2023-00}, under the Brazil--Germany PROBRAL programme (CAPES/DAAD).

\appendix

\section{Detailed proof of Theorem~\ref{thm:well_posedness}} \label{A}

Let $\{\psi_j\}_{j=1}^\infty$ be the eigenfunctions of the Dirichlet Laplacian on $I$ satisfying
\[
-\psi_j'' = \lambda_j \psi_j \quad \text{in } I, \qquad \psi_j = 0 \quad \text{on } \partial I,
\]
with $0 < \lambda_1 \leq \lambda_2 \leq \cdots \to \infty$. Let $V_n := \mathrm{span}\{\psi_1, \dots, \psi_n\}$, and let $u_{0n} := \sum_{j=1}^n (u_0, \psi_j) \psi_j$ be the $L^2(I)$-projection of $u_0$ onto $V_n$.

We seek approximate solutions of the form
\[
u_n(t,x) = \sum_{j=1}^n g_{j,n}(t) \psi_j(x)
\]
satisfying, for all $w \in V_n$,
\begin{equation} \label{FiniteDimModel}
\begin{cases}
(u_{n,t}, w) + (a(u_n)u_{n,x}, w_x) + (b(u_n)u_{n,x}^2, w) = (f, w), & t\in (0,T_n),\\
(u_n(0,\cdot), w) = (u_0, w).
\end{cases}
\end{equation}

By Carath\'eodory's theorem, there exists $T_n > 0$ and absolutely continuous functions $g_{j,n}$ satisfying \eqref{FiniteDimModel}.

\begin{center}
\textbf{ESTIMATE I (basic energy)}
\end{center}

Taking $w = u_n$ in \eqref{FiniteDimModel} and using
\(
(u_{n,t}, u_n) = \tfrac{1}{2}\tfrac{d}{dt}\|u_n\|^2,
\)
\(
(a(u_n)u_{n,x}, u_{n,x}) \geq a_0\|u_{n,x}\|^2,
\)
\(
(b(u_n)u_{n,x}^2, u_n) = \int_I b(u_n)u_n\, u_{n,x}^2\, dx \geq 0
\)
(by the sign condition $b(s)s\geq 0$), and Young's inequality, we obtain
\[
\frac{d}{dt}\|u_n\|^2 + 2 a_0 \|u_{n,x}\|^2 \leq \|f\|^2 + \|u_n\|^2.
\]
Gronwall's lemma and integration in time yield
\begin{equation} \label{estimate1}
\sup_{t\in[0,T]}\|u_n(t)\|^2 + 2a_0\int_0^T\|u_{n,x}\|^2\, dt \leq e^{T}\big(\|u_0\|^2 + \|f\|_{L^2(Q)}^2\big),
\end{equation}
which holds independently of $n$. In particular, the local existence on $[0,T_n]$ extends to $[0,T]$.

\begin{center}
\textbf{ESTIMATE II (time derivative)}
\end{center}

Taking $w = u_{n,t}$ in \eqref{FiniteDimModel} gives
\begin{equation} \label{utAsATestFn}
\|u_{n,t}\|^2 + (a(u_n)u_{n,x}, u_{n,tx}) + (b(u_n)u_{n,x}^2, u_{n,t}) = (f, u_{n,t}).
\end{equation}

For the first nonlinear term:
\begin{align*}
(a(u_n)u_{n,x}, u_{n,tx}) &= \frac{d}{dt}\!\left(\frac{1}{2}\|a(u_n)^{1/2}u_{n,x}\|^2\right) - \frac{1}{2}(a'(u_n)u_{n,t}, u_{n,x}^2) \\
&\geq \frac{d}{dt}\!\left(\frac{1}{2}\|a(u_n)^{1/2}u_{n,x}\|^2\right) - \frac{M}{2}\!\int_I (1+|u_n|^{p-1})|u_{n,t}|u_{n,x}^2\, dx.
\end{align*}
The remainder is estimated as follows. Combining the one-dimensional Sobolev embedding $H^1_0(I) \hookrightarrow L^\infty(I)$ with the resulting $\|u_n\|_{L^\infty}\leq C\|u_{n,x}\|$ and $\|u_{n,t}\|_{L^\infty}\leq C\|u_{n,xt}\|$, we obtain
\begin{equation} \label{HolderSobolevBound1}
\begin{aligned}
\int_I (1+|u_n|^{p-1})|u_{n,t}| u_{n,x}^2\, dx
&\leq (1 + \|u_n\|_{L^\infty}^{p-1})\,\|u_{n,t}\|_{L^\infty}\,\|u_{n,x}\|^2 \\
&\leq C\,(1+\|u_{n,x}\|^{p-1})\,\|u_{n,xt}\|\,\|u_{n,x}\|^2.
\end{aligned}
\end{equation}
This pattern --- pulling out one $L^\infty(I)$-factor via the Sobolev embedding and keeping the remaining $L^2$-norms --- will be used repeatedly throughout this appendix without further comment.

For the second nonlinear term, by (\textbf{A2}) and the same argument as in \eqref{HolderSobolevBound1} (now bounding $|u_n|^q \leq \|u_n\|^q_{L^\infty} \leq C\|u_{n,x}\|^q$ and $|u_{n,t}| \leq \|u_{n,t}\|_{L^\infty} \leq C\|u_{n,xt}\|$, the latter pulled out of the integral):
\begin{equation}\label{HolderSobolevBound2}
\big|(b(u_n)u_{n,x}^2, u_{n,t})\big| \leq M\!\int_I (1+|u_n|^q)\, u_{n,x}^2\, |u_{n,t}|\, dx \leq C\,\|u_{n,xt}\|\,\|u_{n,x}\|^2\big(1+\|u_{n,x}\|^q\big).
\end{equation}

Substituting back, applying Young's inequality with parameters $\eta, \epsilon > 0$, and absorbing terms,
\begin{equation} \label{eq:Est2_step1}
\begin{aligned}
\frac{d}{dt}\!\left(\frac{\|a(u_n)^{1/2}u_{n,x}\|^2}{2}\right) + \|u_{n,t}\|^2
&\leq C\, \|u_{n,xt}\|^2\,P_1(\|u_{n,x}\|) \\
&\quad + C\big(\|f\|^2 + \|u_{n,x}\|^2 + \|u_{n,t}\|^2\big),
\end{aligned}
\end{equation}
where $P_1$ is a polynomial with $P_1(0)=0$ (whose explicit form is $P_1(x) = \frac{M^2}{2\epsilon}x^{2}(1+x^q)^2 + \frac{M^2}{2\epsilon}(1+x^{p-1})^2 x^2$).

\begin{center}
\textbf{ESTIMATE II (cont., time-differentiated)}
\end{center}

Differentiating \eqref{FiniteDimModel} with respect to $t$ and taking $w = u_{n,t}$:
\begin{equation}\label{derivative}
\begin{split}
\frac{d}{dt}\!\left(\frac{\|u_{n,t}\|^{2}}{2}\right)
&+ \int_{I} \big[a(u_n) u_{n,xt}^{2} + a'(u_n) u_{n,t} u_{n,x} u_{n,xt}\big]\, dx \\
&+ \int_{I} \big[b'(u_n) u_{n,t}^{2} u_{n,x}^{2} + 2b(u_n) u_{n,x} u_{n,xt} u_{n,t}\big]\, dx = (f_t, u_{n,t}).
\end{split}
\end{equation}

Term-by-term estimation, using $a(u_n)\geq a_0$ and the embeddings, gives
\begin{align*}
&\int_I a(u_n) u_{n,xt}^2\, dx + \int_I a'(u_n) u_{n,t} u_{n,x} u_{n,xt}\, dx \\
&\quad \geq a_0\|u_{n,xt}\|^2 - C(1+\|u_{n,x}\|^{p-1})\|u_{n,x}\|\,\|u_{n,xt}\|^2,\\
&\big|\int_I b'(u_n)u_{n,t}^2 u_{n,x}^2\, dx + 2\int_I b(u_n)u_{n,x}u_{n,xt}u_{n,t}\, dx\big| \\
&\quad \leq C\|u_{n,xt}\|^2\big(\|u_{n,x}\|^{q+1} + \|u_{n,x}\|^2\big),\\
&|(f_t, u_{n,t})| \leq \frac{1}{2\eta}\|f_t\|^2 + \frac{\eta}{2}\|u_{n,t}\|^2.
\end{align*}
Combining,
\begin{equation}\label{74}
\frac{d}{dt}\!\left(\frac{\|u_{n,t}\|^{2}}{2}\right) + a_0\|u_{n,xt}\|^{2} \leq C\,\|u_{n,xt}\|^{2}\, P_2(\|u_{n,x}\|) + \frac{1}{2\eta}\|f_t\|^{2} + \frac{\eta}{2}\|u_{n,t}\|^{2},
\end{equation}
where $P_2(x) := 3M x^{q+1} + Mx^2 + M(1+x^{p-1})x$ satisfies $P_2(0)=0$.

Adding \eqref{eq:Est2_step1} and \eqref{74} and choosing $\eta$ small,
\begin{equation} \label{AlmostEstimateII}
\begin{aligned}
\frac{1}{2}\frac{d}{dt}\!\big(\|u_{n,t}\|^{2} + \|a(u_n)^{1/2}u_{n,x}\|^{2}\big) + \|u_{n,t}\|^{2} + a_0\|u_{n,xt}\|^{2}
&\leq \|u_{n,xt}\|^{2}P(\|u_{n,x}\|) \\
&\quad + C(\|f\|^{2} + \|f_t\|^{2}),
\end{aligned}
\end{equation}
where $P := P_1 + P_2$ is continuous with $P(0) = 0$.

\medskip\noindent\textbf{Bootstrapping (continuation argument).}
Since $P$ is continuous and $P(0) = 0$, there exists $\kappa > 0$ such that $P(x) \leq a_0/2$ for all $x \in [0, \kappa]$. Define
\[
t^* := \sup\Big\{\, \tau \in [0,T] :\ \|u_{n,x}(s,\cdot)\| \leq \kappa \ \forall\, s \in [0,\tau]\, \Big\},
\]
with the convention $\sup\emptyset = 0$.

\textit{Step (a): $t^* > 0$.} Since $u_n \in C^1([0,T];V_n)$, the map $s \mapsto \|u_{n,x}(s,\cdot)\|$ is continuous. Assume the smallness of initial data
\begin{equation} \label{SmallnessOfInitialData}
\|u_{0,x}\| \leq \kappa/2.
\end{equation}
Then $\|u_{n,x}(0,\cdot)\| = \|u_{0n,x}\| \leq \|u_{0,x}\| \leq \kappa/2 < \kappa$, so by continuity there is a neighborhood of $0$ where $\|u_{n,x}\| \leq \kappa$, hence $t^* > 0$.

\textit{Step (b): $t^* = T$.} Suppose for contradiction $t^* < T$. By definition of $t^*$ and continuity, $\|u_{n,x}(t^*,\cdot)\| = \kappa$. On $[0, t^*]$, $P(\|u_{n,x}\|) \leq a_0/2$, so \eqref{AlmostEstimateII} reduces to
\[
\frac{1}{2}\frac{d}{dt}\!\big(\|u_{n,t}\|^{2} + \|a(u_n)^{1/2}u_{n,x}\|^{2}\big) + \|u_{n,t}\|^{2} + \frac{a_0}{2}\|u_{n,xt}\|^{2} \leq C(\|f\|^{2} + \|f_t\|^{2}).
\]
Integrating from $0$ to $t^*$ and using $a(u_n) \geq a_0$, $\|a(u_n)^{1/2}u_{n,x}\|^2 \geq a_0 \|u_{n,x}\|^2$,
\begin{align} \label{AlmostEstIIbis}
\frac{a_0}{2}\|u_{n,x}(t^*,\cdot)\|^{2} &\leq Q_0(\|u_0\|, \|u_{0,x}\|, \|u_{0,xx}\|) + C\big(\|f\|_{L^2(Q)}^{2} + \|f_t\|_{L^2(Q)}^{2}\big),
\end{align}
where $Q_0$ is a polynomial in the initial data norms. By the smallness of the data (\textbf{A3})--(\textbf{A4})--(\textbf{A5}), specifically $\|f\|_{L^2(Q)}^2 + \|f_t\|_{L^2(Q)}^2 \leq \delta'^2$ where
\[
\delta'^2 := \frac{(a_0\kappa)^2}{8(8a_0 + e^T)},
\]
and choosing $\|u_{0,xx}\|$ small enough to ensure $Q_0(\|u_0\|, \|u_{0,x}\|, \|u_{0,xx}\|) \leq a_0\kappa^2/8$, we obtain
\begin{equation} \label{ContradictionI}
\|u_{n,x}(t^*,\cdot)\|^2 \leq \frac{\kappa^2}{2},\qquad\text{hence}\qquad \|u_{n,x}(t^*,\cdot)\| \leq \frac{\kappa}{\sqrt{2}} < \kappa.
\end{equation}
But by definition of $t^*$, $\|u_{n,x}(t^*,\cdot)\| = \kappa$. This contradiction shows $t^* = T$.

\textit{Step (c): Uniform estimate.} It now follows from \eqref{AlmostEstimateII} on $[0,T]$ that
\begin{equation} \label{EstimateII}
\begin{aligned}
\sup_{t\in[0,T]}\big(\|u_{n,t}\|^{2} + \|u_{n,x}\|^2\big)
&+ \int_{0}^{T}\big(\|u_{n,t}\|^{2} + \|u_{n,xt}\|^{2}\big)\,dt \\
&\leq C\big(\|u_0\|_{H^1}^2 + \|f\|_{L^2(Q)}^{2} + \|f_t\|_{L^2(Q)}^{2}\big).
\end{aligned}
\end{equation}

\begin{center}
\textbf{ESTIMATE III (spatial regularity)}
\end{center}

Plugging $w = -u_{n,xx}$ in \eqref{FiniteDimModel} (or, more precisely, taking $w$ such that $w_x = -u_{n,xxx}$ and exploiting the basis structure to integrate by parts):
\[
K_1 + K_2 - K_3 = -K_4,
\]
where
\begin{align*}
K_1 &:= -\int_I u_{n,t} u_{n,xx}\,dx = \frac{1}{2}\frac{d}{dt}\|u_{n,x}\|^2,\\
K_2 &:= \int_I (a(u_n) u_{n,xx} + a'(u_n) u_{n,x}^2) u_{n,xx}\, dx \\
&\qquad \geq a_0 \|u_{n,xx}\|^2 - C(1+\|u_n\|_{L^\infty}^{p-1})\|u_{n,x}\|\, \|u_{n,xx}\|^2,\\
K_3 &:= \int_I b(u_n) u_{n,x}^2 u_{n,xx}\, dx,\\
|K_3| &\leq M(1+\|u_n\|_{L^\infty}^q)\|u_{n,x}\|\,\|u_{n,xx}\|^2,\\
K_4 &:= \int_I f\, u_{n,xx}\,dx,\\
|K_4| &\leq \frac{1}{a_0}\|f\|^2 + \frac{a_0}{4}\|u_{n,xx}\|^2.
\end{align*}
Combining,
\begin{equation}
\frac{1}{2}\frac{d}{dt}\|u_{n,x}\|^2 + \big(\tfrac{a_0}{2} - \widetilde P(\|u_{n,x}\|)\big)\|u_{n,xx}\|^2 \leq \frac{1}{a_0}\|f\|^2,
\end{equation}
where $\widetilde P(x) = M x(2 + x^{p-1} + x^q)$. Possibly shrinking $\kappa, \delta$ further, we ensure $\widetilde P(\|u_{n,x}\|) \leq a_0/4$, hence
\begin{equation} \label{WPEstimateFour}
\sup_{t\in[0,T]}\|u_{n,x}\|^2 + \|u_{n,xx}\|_{L^2(Q)}^2 \leq C\big(\|u_{0,x}\|^2 + \|f\|_{L^2(Q)}^2\big).
\end{equation}

\medskip
Combining \eqref{estimate1}, \eqref{EstimateII}, and \eqref{WPEstimateFour},
\[
\|u_n\|_W \leq C\big(\|u_0\|_{H^2(I)} + \|f\|_{H^1(0,T;L^2(I))}\big).
\]
By the Aubin--Lions--Simon Theorem \cite{lions1969quelques,aubin1963analyse,simon1986compact}, the embeddings
\[
W \hookrightarrow L^2(0,T;H^1_0(I))\quad \text{and}\quad W \hookrightarrow C([0,T]; L^2(I))
\]
are compact. We extract a subsequence of $\{u_n\}$ converging weakly in $W$ and strongly in $L^2(0,T;H^1_0(I))\cap C([0,T]; L^2(I))$. Passage to the limit in the nonlinear terms is performed as in the proof of Theorem~\ref{thm:existence_optimal} (continuity of $a, b$, dominated convergence, and Aubin--Lions on the gradient family). For further standard details we refer to \cite{lions1969quelques,lions2012non,evans2010partial}.

\section{Detailed proof of Lemma~\ref{lem:linearized_system}} \label{B}

We use the Galerkin method with the Hilbert basis $\{\psi_j\}_{j=1}^\infty$ of $H^1_0(I)$ given by the eigenfunctions of the Dirichlet Laplacian (as in Appendix~\ref{A}). Let $V_n := \mathrm{span}\{\psi_1,\dots,\psi_n\}$ and let $\pi_n$ denote the $L^2(I)$-projection onto $V_n$.

Fix $n \geq 1$. By Carath\'eodory's existence theorem \cite{coddington1955theory}, there exists $T_n \in (0,T]$ and measurable functions $c_{j,n} : [0,T_n] \to \mathbb{R}$ such that
\[
y_n := \sum_{j=1}^n c_{j,n}\,\psi_j
\]
solves
\begin{equation} \label{LinFiniteDimModel}
\begin{cases}
(y_{n,t}, \phi) + ((a(u)y_n)_x, \phi_x) + (2 b(u) u_x y_{n,x}, \phi) + (b'(u) u_x^2 y_n, \phi) = (h_n, \phi),\\
y_n(0,\cdot) = 0,
\end{cases}
\end{equation}
for all $\phi \in V_n$ and $t \in [0,T_n]$, where $h_n := \pi_n h$.

Taking $\phi = y_n$ (we drop the subscript $n$ for clarity in the energy estimates):
\begin{equation} \label{LinSystem}
\frac{1}{2}\frac{d}{dt}\|y\|^2 + \int_I (a(u)y)_x y_x\, dx + 2\int_I b(u) u_x y_x y\, dx + \int_I b'(u) u_x^2 y^2\, dx = \int_I h\,y\, dx.
\end{equation}

Estimating the diffusion term:
\begin{equation} \label{Step1LinSystem}
\begin{aligned}
\int_I (a(u)y)_x y_x\, dx &= \int_I a(u) y_x^2\, dx + \int_I a'(u) u_x\, y\, y_x\, dx\\
&\geq a_0\|y_x\|^2 - C\|u_x\|_{L^\infty}\|y\|\|y_x\| \\
&\geq a_0\|y_x\|^2 - C\|u_{xx}\|\|y\|\|y_x\|\\
&\geq \frac{a_0}{2}\|y_x\|^2 - C\|u_{xx}\|^2\|y\|^2.
\end{aligned}
\end{equation}

The other nonlinear terms:
\begin{equation} \label{Step2LinSystem}
\Big|\int_I b(u) u_x y_x y\, dx\Big| \leq C\|u_{xx}\|\|y_x\|\|y\| \leq \epsilon\|y_x\|^2 + C_\epsilon \|u_{xx}\|^2\|y\|^2,
\end{equation}
\begin{equation} \label{Step3LinSystem}
\Big|\int_I b'(u) u_x^2 y^2\, dx\Big| \leq C\|u_x\|_{L^\infty}^2\|y\|^2 \leq C\|u_{xx}\|^2\|y\|^2.
\end{equation}
The right-hand side: $|(h, y)| \leq \frac{1}{2}\|h\|^2 + \frac{1}{2}\|y\|^2$.

Choosing $\epsilon = a_0/8$ and combining,
\begin{equation} \label{TreatedLinSystem}
\frac{d}{dt}\|y\|^2 + a_0\|y_x\|^2 \leq C\big(1 + \|u_{xx}\|^2\big)\|y\|^2 + \|h\|^2.
\end{equation}
Note: all constants $C$ here are independent of $n$. Gronwall's lemma yields
\begin{equation} \label{EstLinSystemPt1}
\sup_{t\in[0,T]}\|y\|^2 \leq C\,e^{C\int_0^T (1+\|u_{xx}\|^2)\, dt}\|h\|_{L^2(Q)}^2 \leq C(\|u\|_W)\|h\|_{L^2(Q)}^2.
\end{equation}
This estimate also implies $T_n = T$. Integrating \eqref{TreatedLinSystem} in time and using \eqref{EstLinSystemPt1},
\begin{equation} \label{EstLinSystemPt2}
\int_0^T \|y_x\|^2\, dt \leq C(\|u\|_W)\,\|h\|_{L^2(Q)}^2.
\end{equation}

For the time derivative: for $\phi \in H^1_0(I)$,
\begin{align*}
|(y_t, \phi)| &= |(y_t, \pi_n\phi)| \\
&= \big| -(a(u)y_x, (\pi_n\phi)_x) - (a'(u)u_x y, (\pi_n\phi)_x) \\
&\quad + (-2b(u)u_x y_x - b'(u)u_x^2 y + h_n, \pi_n\phi)\big|\\
&\leq C\|h\|_{L^2(Q)}\|\phi\|_{H^1_0},
\end{align*}
whence
\begin{equation} \label{EstLinSystemPt3}
\|y_t\|_{L^2(0,T;H^{-1}(I))} \leq C\|h\|_{L^2(Q)}.
\end{equation}

The estimates \eqref{EstLinSystemPt1}--\eqref{EstLinSystemPt3} allow us to extract a subsequence $\{y_{n_k}\}$ converging to a weak solution $y \in H$. We now establish the improved $W$-regularity, distinguishing the two bounds stated in Lemma~\ref{lem:linearized_system}.

\smallskip
\noindent\emph{Sharper bound \eqref{linearized_estimate_L2} ($h\in L^2(Q)$ alone).}
Testing the projected equation \eqref{LinFiniteDimModel} successively with $-y_{n,xx}$ (spatial regularity) and with $y_{n,t}$ (time regularity) and proceeding exactly as in Estimates~II and~III of the proof of Theorem~\ref{thm:gateaux_differentiability} (with $u_\epsilon$ replaced by $u$, $y_\epsilon$ by $y_n$, and the source term $h$ kept on the right-hand side), one obtains
\begin{equation*}
\sup_{t\in[0,T]}\big(\|y\|^2 + \|y_x\|^2\big) + \|y_{xx}\|_{L^2(Q)}^2 + \|y_t\|_{L^2(Q)}^2 \leq C(\|u\|_W)\,\|h\|_{L^2(Q)}^2.
\end{equation*}
Note that this bound \emph{does not require} differentiating the equation in time, hence requires only $h\in L^2(Q)$. By the one-dimensional embedding $L^\infty(0,T;H^1_0(I))\hookrightarrow L^\infty(Q)$, this also yields $\|y\|_{L^\infty(Q)} \leq C\|h\|_{L^2(Q)}$, completing the proof of \eqref{linearized_estimate_L2}--\eqref{linearized_estimate_Linf}.

\smallskip
\noindent\emph{Full $W$-bound \eqref{linearized_estimate}, for $h\in H^1(0,T;L^2(I))$.}
To obtain in addition the bound $\|y_t\|_{L^2(0,T;H^1_0(I))}$, we differentiate the equation in time and test with $y_{n,t}$, mirroring Estimate~IV of the proof of Theorem~\ref{thm:gateaux_differentiability}. This produces a source term involving $h_t$, which is precisely where the additional regularity $h_t\in L^2(Q)$ enters. The resulting bound is
\[
\sup_{t\in[0,T]}\|y_t\|^2 + \|y_{tx}\|_{L^2(Q)}^2 \leq C(\|u\|_W)\,\|h\|_{H^1(0,T;L^2(I))}^2,
\]
and combining with the sharper bound above gives
\[
\|y\|_W \leq C(\|u\|_W)\,\|h\|_{H^1(0,T;L^2(I))}.
\]
The detailed term-by-term estimates parallel Estimates~II--IV of the proof of Theorem~\ref{thm:gateaux_differentiability} (noting that, since $u_\epsilon \equiv u$ in the present linear problem, all the singular difference quotients $(a(u_\epsilon)-a(u))/\epsilon$ etc.\ are absent and the calculation simplifies considerably); we omit the mechanical details. Uniqueness of the strong solution follows from the linearity of the system and the energy estimate applied to the difference of two solutions.

\section{Proof of Lemma~\ref{lem:adjoint_system}}\label{C}

We establish the well-posedness of the adjoint system
\begin{equation*}
\begin{cases}
-\varphi_t - a(u)\varphi_{xx} - 2(b(u)u_x \varphi)_x + b'(u)u_x^2 \varphi = G, & \text{in } Q,\\
\varphi = 0, & \text{on } \Sigma,\\
\varphi(T,x) = \varphi^T(x), & \text{for } x \in I,
\end{cases}
\end{equation*}
where $u \in W$ is given, $G \in L^2(Q)$, and $\varphi^T \in H^1_0(I)$.

By a time-reversal $\widetilde\varphi(t,x) := \varphi(T-t,x)$, this system is equivalent to a forward-in-time linear parabolic system. Standard parabolic theory \cite{lions2012non,evans2010partial} provides the existence of a unique weak solution $\varphi \in H$ (where $H$ is as in \eqref{defH}) satisfying
\[
\|\varphi\|_H \leq C\big(\|G\|_{L^2(Q)} + \|\varphi^T\|_{L^2(I)}\big).
\]
We focus on the improved regularity $\varphi \in W$, which we obtain through energy estimates carried out at the Galerkin level.

As in Appendices~\ref{B} and~\ref{D}, we employ the Galerkin method with the Hilbert basis $\{\psi_j\}_{j=1}^\infty$ of $H^1_0(I)$ given by the eigenfunctions of the Dirichlet Laplacian (as in Appendix~\ref{A}). Let $V_n := \mathrm{span}\{\psi_1,\dots,\psi_n\}$ and let $\pi_n$ denote the $L^2(I)$-projection onto $V_n$. By Carath\'eodory's existence theorem \cite{coddington1955theory}, there exists $T_n \in (0,T]$ and measurable functions $e_{j,n} : [0,T_n] \to \mathbb{R}$ such that
\[
\varphi_n(t,x) := \sum_{j=1}^n e_{j,n}(t)\,\psi_j(x)
\]
solves the projected (time-reversed) system on $V_n$ with terminal data $\varphi_n(T) = \pi_n\varphi^T$ and source $G_n := \pi_n G$. All the estimates below are performed on $\varphi_n$, where the integrands are square-integrable (in particular $\varphi_{n,xx} \in V_n \subset L^2(I)$, so the integrals $\int_I G_n\varphi_{n,xx}\,dx$ and $\int_I (b(u)u_x\varphi_n)_x\varphi_{n,xx}\,dx$ are well-defined); the regularity $\varphi \in W$ then follows from the resulting uniform-in-$n$ bounds and a standard limiting argument. For simplicity we drop the subscript $n$ in the energy estimates that follow.

Throughout the proof we use, in addition to the standard one-dimensional embedding $\|v\|_{L^\infty(I)} \leq C\|v_x\|$ for $v \in H^1_0(I)$, the \emph{sharper} Gagliardo--Nirenberg interpolation $\|v_x\|_{L^\infty(I)}^2 \leq C\,\|v_{xx}\|\,\|v_x\|$ for $v \in H^2(I)\cap H^1_0(I)$. Combined with the uniform-in-$t$ bound $\|u_x(t,\cdot)\| \leq C\|u\|_W \leq C$ provided by $W \hookrightarrow C([0,T]; H^1_0(I))$, this gives the \emph{linear} bound
\begin{equation}\label{AppC_GNsharp}
\|u_x(t,\cdot)\|_{L^\infty(I)}^2 \leq C\,\|u_{xx}(t,\cdot)\|,
\end{equation}
which is critical for keeping the Gronwall coefficients below in $L^1(0,T)$.

\smallskip
\noindent\emph{Spatial regularity.}
Test the projected equation with $-\varphi_{xx}$ (well-defined since $\varphi_{xx} \in V_n \subset L^2(I)$):
\begin{align*}
\frac{1}{2}\frac{d}{dt}\|\varphi_x\|^2 &+ \int_I a(u) \varphi_{xx}^2\, dx \\
&= -\int_I a'(u) u_x \varphi_x \varphi_{xx}\, dx + 2\int_I (b(u) u_x \varphi)_x\, \varphi_{xx}\, dx \\
&\quad - \int_I b'(u) u_x^2 \varphi \varphi_{xx}\, dx + \int_I G \varphi_{xx}\, dx.
\end{align*}
(Here the time-direction is reversed; we tacitly assume the time-reversal has been applied.)

Each term is estimated using the 1D embeddings:
\begin{align*}
\Big|\int_I a'(u) u_x \varphi_x \varphi_{xx}\, dx\Big| &\leq C\|u_x\|_{L^\infty}\|\varphi_x\|\|\varphi_{xx}\| \\
&\leq C\,\|u_{xx}\|^{1/2}\,\|\varphi_x\|\,\|\varphi_{xx}\| \\
&\leq \tfrac{a_0}{8}\|\varphi_{xx}\|^2 + C\|u_{xx}\|\|\varphi_x\|^2,
\end{align*}
where we used \eqref{AppC_GNsharp} in the second inequality. For the second term, expand the derivative:
\begin{align*}
2\int_I (b(u) u_x \varphi)_x \varphi_{xx}\, dx &= 2\int_I [b'(u) u_x^2 \varphi + b(u) u_{xx} \varphi + b(u) u_x \varphi_x]\varphi_{xx}\, dx.
\end{align*}
We bound each subterm using $\|\varphi\|_{L^\infty} \leq C\|\varphi_x\|$ and \eqref{AppC_GNsharp}:
\begin{align*}
\Big|\int_I b'(u) u_x^2 \varphi \varphi_{xx}\, dx\Big| &\leq C\|u_x\|_{L^\infty}^2\|\varphi\|_{L^\infty}\|\varphi_{xx}\| \\
&\leq C\,\|u_{xx}\|\,\|\varphi_x\|\,\|\varphi_{xx}\| \\
&\leq \tfrac{a_0}{16}\|\varphi_{xx}\|^2 + C\|u_{xx}\|^2\|\varphi_x\|^2,\\
\Big|\int_I b(u) u_{xx} \varphi \varphi_{xx}\, dx\Big| &\leq C\|u_{xx}\|\,\|\varphi\|_{L^\infty}\,\|\varphi_{xx}\|\\
&\leq C\,\|u_{xx}\|\,\|\varphi_x\|\,\|\varphi_{xx}\| \\
&\leq \tfrac{a_0}{16}\|\varphi_{xx}\|^2 + C\|u_{xx}\|^2\|\varphi_x\|^2,\\
\Big|\int_I b(u) u_x \varphi_x \varphi_{xx}\, dx\Big| &\leq C\|u_x\|_{L^\infty}\,\|\varphi_x\|\,\|\varphi_{xx}\| \\
&\leq C\,\|u_{xx}\|^{1/2}\,\|\varphi_x\|\,\|\varphi_{xx}\| \\
&\leq \tfrac{a_0}{16}\|\varphi_{xx}\|^2 + C\|u_{xx}\|\|\varphi_x\|^2.
\end{align*}
The remaining nonlinear term $-\int_I b'(u) u_x^2 \varphi \varphi_{xx}\, dx$ is bounded exactly as the first subterm above. Finally, the source: $|\int G \varphi_{xx}| \leq \tfrac{a_0}{8}\|\varphi_{xx}\|^2 + C\|G\|^2$.

Combining all estimates,
\begin{equation}\label{eq:AppC_master}
\frac{d}{dt}\|\varphi_x\|^2 + \tfrac{a_0}{2}\|\varphi_{xx}\|^2 \leq C\big(1 + \|u_{xx}\|^2\big)\|\varphi_x\|^2 + C\|G\|^2.
\end{equation}
Since $\|u_{xx}\|^2 \in L^1(0,T)$ because $u \in W \subset L^2(0,T;H^2(I))$, Gronwall's inequality yields
\begin{equation} \label{eq:spatial_estimate}
\sup_{t\in[0,T]}\|\varphi_x\|^2 + \int_0^T \|\varphi_{xx}\|^2\, dt \leq C\big(\|G\|_{L^2(Q)}^2 + \|\varphi^T_x\|^2\big).
\end{equation}

\smallskip
\noindent\emph{Time regularity.} Testing with $\varphi_t$ (or, time-reversed, $-\varphi_t$):
\begin{align*}
\|\varphi_t\|^2 + \frac{1}{2}\frac{d}{dt}\!\int_I a(u)\varphi_x^2\, dx
&- \frac{1}{2}\int_I a'(u) u_t \varphi_x^2\, dx \\
&- 2\int_I (b(u)u_x\varphi)_x \varphi_t\, dx \\
&+ \int_I b'(u)u_x^2\varphi\varphi_t\, dx = \int_I G\varphi_t\, dx.
\end{align*}
The cross term is bounded using $u_t \in L^2(0,T;H^1_0) \hookrightarrow L^2(0,T;L^\infty)$:
\[
\Big|\frac{1}{2}\int_I a'(u)u_t\varphi_x^2\, dx\Big| \leq C\|u_t\|_{L^\infty}\|\varphi_x\|^2 \leq C\big(\|u_t\|_{L^\infty}^2 + 1\big)\|\varphi_x\|^2,
\]
where the last bound makes the time-dependent coefficient $\|u_t\|_{L^\infty}^2$ integrable on $[0,T]$. For the divergence-form term, integrating by parts in space puts a derivative on $\varphi_t$:
\[
\Big|2\int_I (b(u)u_x\varphi)_x\varphi_t\, dx\Big| = \Big|2\int_I b(u)u_x\varphi\,\varphi_{tx}\, dx\Big| \leq C\|u_x\|_{L^\infty}\|\varphi\|_{L^\infty}\|\varphi_{tx}\|.
\]
Note however that this requires controlling $\|\varphi_{tx}\|$, which is bounded only in the higher-regularity step below. Alternatively, expand the derivative directly:
\[
2\int_I (b(u)u_x\varphi)_x\varphi_t = 2\int_I [b'(u)u_x^2\varphi + b(u)u_{xx}\varphi + b(u)u_x\varphi_x]\varphi_t,
\]
and bound each term using \eqref{AppC_GNsharp}, $\|\varphi\|_{L^\infty}\leq C\|\varphi_x\|$, Young's inequality, and the bound $\|\varphi_x\|_{L^\infty(0,T;L^2(I))}\leq C$ from \eqref{eq:spatial_estimate}:
\[
\Big|2\int_I (b(u)u_x\varphi)_x\varphi_t\,dx\Big| \leq \tfrac{1}{6}\|\varphi_t\|^2 + C\big(1+\|u_{xx}\|^2\big)\|\varphi_x\|^2 + C\|\varphi_{xx}\|^2.
\]
Similarly, $|\int_I b'(u) u_x^2 \varphi \varphi_t\, dx| \leq \tfrac{1}{6}\|\varphi_t\|^2 + C\|u_{xx}\|^2\|\varphi_x\|^2$ and $|\int G\varphi_t| \leq \tfrac{1}{6}\|\varphi_t\|^2 + C\|G\|^2$.

Combining these estimates and integrating in time:
\begin{equation}\label{eq:time_estimate}
\int_0^T\|\varphi_t\|^2\, dt \leq C\big(\|G\|_{L^2(Q)}^2 + \|\varphi^T\|_{H^1_0}^2\big),
\end{equation}
where we have used \eqref{eq:spatial_estimate} to control the contribution of $\|\varphi_{xx}\|^2_{L^2(Q)}$.

\smallskip
\noindent\emph{Higher time regularity.} Differentiating the equation with respect to $t$ and testing with $\varphi_t$ produces, after term-by-term estimates analogous to those above (and again invoking \eqref{AppC_GNsharp} wherever $\|u_x\|_{L^\infty}^2$ appears, to keep the resulting coefficient integrable),
\[
\frac{d}{dt}\|\varphi_t\|^2 + a_0\|\varphi_{xt}\|^2 \leq C\,\widetilde\kappa(t)(\|\varphi_t\|^2 + \|\varphi_x\|^2) + C\|G_t\|^2,
\]
where $\widetilde\kappa(t) = 1 + \|u_t(t,\cdot)\|_{H^1_0}^2 + \|u_{xt}(t,\cdot)\|^2 + \|u_{xx}(t,\cdot)\|^2 \in L^1(0,T)$ since $u\in W$. Gronwall's inequality then gives
\begin{equation}\label{eq:higher_time_estimate}
\sup_{t\in[0,T]}\|\varphi_t\|^2 + \int_0^T \|\varphi_{xt}\|^2\, dt \leq C\big(\|G\|_{H^1(0,T;L^2(I))}^2 + \|\varphi^T\|_{H^1_0}^2\big).
\end{equation}

Combining \eqref{eq:spatial_estimate}, \eqref{eq:time_estimate}, and \eqref{eq:higher_time_estimate}, we conclude $\varphi \in W$ with
\[
\|\varphi\|_W \leq C\big(\|G\|_{L^2(Q)} + \|\varphi^T\|_{H^1_0(I)}\big).
\]
This concludes the proof.

\section{Proof of Lemma~\ref{lem:linearized_adjoint}} \label{D}

We consider the linearized adjoint system
\begin{equation*}
\begin{cases}
-\psi_t - a'(u)y\varphi_{xx} - a(u)\psi_{xx} - 2(b'(u)yu_x\varphi + b(u)y_x\varphi + b(u)u_x\psi)_x \\
\quad + b''(u)yu_x^2\varphi + 2b'(u)u_x y_x\varphi + b'(u)u_x^2\psi = F, & \text{in } Q,\\
\psi = 0, & \text{on } \Sigma,\\
\psi(T,x) = \psi^T(x), & \text{for } x \in I,
\end{cases}
\end{equation*}
where $u, \varphi \in W$, $y \in W$, $F \in L^2(Q)$, $\psi^T \in H^1_0(I)$.

We employ the Galerkin method with the Hilbert basis $\{\eta_j\}$ of $H^1_0(I)$ from the Dirichlet eigenfunctions, $V_n := \mathrm{span}\{\eta_1,\dots,\eta_n\}$, and $\pi_n$ the $L^2(I)$-projection onto $V_n$. By Carath\'eodory, there exists $T_n \in (0,T]$ and measurable $d_{j,n}$ such that
\[
\psi_n(t,x) = \sum_{j=1}^n d_{j,n}(t)\eta_j(x)
\]
solves the projected system on $V_n$ with terminal data $\psi_n(T) = \pi_n\psi^T$ and source $F_n = \pi_n F$.

Taking the test function $\phi = \psi_n$ (we drop the subscript for clarity), multiplying the equation by $\psi$, integrating over $I$, and integrating by parts the divergence-form contributions (using $\psi|_{\partial I} = 0$), we obtain the energy identity
\begin{equation}\label{AppD_EnergyId}
\begin{aligned}
-\frac{1}{2}\frac{d}{dt}\|\psi\|^2 + \int_I a(u)\psi_x^2\,dx &= \int_I a'(u)y\,\varphi_{xx}\,\psi\,dx - \int_I a'(u)u_x\,\psi\,\psi_x\,dx \\
&\quad - 2\!\int_I \big[b'(u)y u_x\varphi + b(u)y_x\varphi + b(u)u_x\psi\big]\psi_x\,dx \\
&\quad - \int_I b''(u)y u_x^2\,\varphi\,\psi\,dx - 2\!\int_I b'(u)u_x y_x\,\varphi\,\psi\,dx \\
&\quad - \int_I b'(u)u_x^2\,\psi^2\,dx + \int_I F\,\psi\,dx \\
&=: I_1 + I_2 + I_3 + I_4 + I_5 + I_6 + I_7,
\end{aligned}
\end{equation}
where the term $\int_I a'(u)u_x\,\psi\,\psi_x\,dx$ (denoted $I_2$, with the minus sign in front) arises from integration by parts of $-a(u)\psi_{xx}\psi$ via the identity
\[
-\int_I a(u)\psi_{xx}\psi\,dx = \int_I a(u)\psi_x^2\,dx + \int_I a'(u)u_x\,\psi\,\psi_x\,dx,
\]
and the $-2(\cdot)\psi_x$ contributions in $I_3$ come from integration by parts of the divergence-form term $-2(\cdots)_x\,\psi$.

The key observation is that the embedding $W \hookrightarrow C([0,T];H^1_0(I))$ supplies the \emph{uniform} bounds $\|y_x\|_{L^\infty(0,T;L^2(I))} \leq C\|y\|_W$ and $\|\varphi_x\|_{L^\infty(0,T;L^2(I))} \leq C\|\varphi\|_W$. Combined with the \emph{sharp} interpolation \eqref{AppC_GNsharp}, used in the uniform-in-$t$ form
\[
\|u_x(t,\cdot)\|_{L^\infty(I)}^2 \leq C\,\|u_{xx}(t,\cdot)\|,
\]
these keep every power of $\|u_{xx}\|$ in the time-integrability factor at most quadratic. We illustrate the argument on the most delicate term, namely
\[
I_4 = -\int_I b''(u)\,y\,u_x^2\,\varphi\,\psi\,dx,
\]
which carries two factors of $u_x$. Using $\|y\|_{L^\infty(I)}\leq C\|y_x\|$, $\|\varphi\|_{L^\infty(I)}\leq C\|\varphi_x\|$, the bound $\|u_x\|_{L^\infty(I)}^2 \leq C\|u_{xx}\|$ from \eqref{AppC_GNsharp}, and (\textbf{A2}), we obtain
\begin{align*}
|I_4| &\leq C\,\|y\|_{L^\infty(I)}\,\|u_x\|_{L^\infty(I)}^2\,\|\varphi\|_{L^\infty(I)}\,\|\psi\| \\
&\leq C\,\|y_x\|\,\|u_{xx}\|\,\|\varphi_x\|\,\|\psi\| \\
&\leq C\,\|u_{xx}\|\,\|y\|_W\,\|\varphi\|_W\,\|\psi\| \\
&\leq C\,\|u_{xx}\|\,\big(\|y\|_W^2\,\|\varphi\|_W^2 + \|\psi\|^2\big),
\end{align*}
where the last step is Young's inequality applied to the pair $\|y\|_W\|\varphi\|_W\cdot\|\psi\|$, with the linear factor $\|u_{xx}\|$ kept outside. This bound has the form $\Theta(t)\|\psi\|^2 + \Xi(t)$ with both $\Theta, \Xi$ proportional to $\|u_{xx}(t,\cdot)\|$, which is in $L^2(0,T)\subset L^1(0,T)$ since $\|u_{xx}\|^2 \in L^1(0,T)$. (The naive estimate, which uses the cruder bound $\|u_x\|_{L^\infty}^2\leq C\|u_{xx}\|^2$ and then splits all four factors symmetrically by Young's inequality, would produce a $\|u_{xx}\|^4$ term that is not integrable in time; the point of \eqref{AppC_GNsharp} together with the uniform $W$-bounds on $y,\varphi$ is precisely to avoid this.)

The remaining terms $I_1, I_2, I_3, I_5, I_6$ are estimated analogously, in each case using \eqref{AppC_GNsharp} so that the number of factors of $u_x$ determines the power of $\|u_{xx}\|$ (one factor $u_x$ gives $\|u_x\|_{L^\infty}\leq C\|u_{xx}\|^{1/2}$; two factors give $\|u_x\|_{L^\infty}^2\leq C\|u_{xx}\|$). We record explicitly:
\begin{align*}
|I_1| &\leq C\|y_x\|\|\varphi_{xx}\|\|\psi\| \leq C\|\varphi_{xx}\|^2\|\psi\|^2 + C\|y_x\|^2,\\
|I_2| &\leq C\|u_x\|_{L^\infty}\|\psi\|\|\psi_x\| \leq C\|u_{xx}\|^{1/2}\|\psi\|\|\psi_x\| \\
      &\leq \tfrac{a_0}{8}\|\psi_x\|^2 + C\|u_{xx}\|\|\psi\|^2,\\
|I_3| &\leq \tfrac{a_0}{8}\|\psi_x\|^2 + C\,\|u_{xx}\|\big(\|y\|_W^2\|\varphi\|_W^2 + \|\psi\|^2\big) + C\|\varphi\|_W^2\|y_x\|^2 \\
|I_5| &\leq C\,\|u_x\|_{L^\infty}\|y_x\|\|\varphi\|_{L^\infty}\|\psi\| \leq C\,\|u_{xx}\|^{1/2}\|y\|_W\|\varphi\|_W\|\psi\| \\
      &\leq C\,\|u_{xx}\|\big(\|y\|_W^2\|\varphi\|_W^2 + \|\psi\|^2\big),\\
|I_6| &\leq C\|u_x\|_{L^\infty}^2\|\psi\|^2 \leq C\|u_{xx}\|\|\psi\|^2,\\
|I_7| &\leq \tfrac{1}{2}\|F\|^2 + \tfrac{1}{2}\|\psi\|^2.
\end{align*}
In every case, the resulting time-coefficient is dominated by $1 + \|u_{xx}(t,\cdot)\|^2 + \|\varphi_{xx}(t,\cdot)\|^2 \in L^1(0,T)$ (using $\|u_{xx}\|^{1/2}, \|u_{xx}\| \leq 1 + \|u_{xx}\|^2$), and the inhomogeneous part of the Gronwall bound is $L^1(0,T)$ as well. Combining all of them gives
\[
-\frac{d}{dt}\|\psi\|^2 + a_0\|\psi_x\|^2 \leq C(\|y\|_W,\|\varphi\|_W)\,\Theta(t)\,\|\psi\|^2 + C(\|y\|_W,\|\varphi\|_W)\,\Xi(t),
\]
where
\[
\Theta(t) := 1 + \|u_{xx}(t,\cdot)\|^2 + \|\varphi_{xx}(t,\cdot)\|^2 \in L^1(0,T),
\]
\[
\Xi(t) := \|y_x(t,\cdot)\|^2 + \|F(t,\cdot)\|^2 + \|u_{xx}(t,\cdot)\|^2 \in L^1(0,T).
\]

Gronwall's inequality (in reverse time, anchored at $t = T$) yields
\begin{equation}\label{eq:psi_L2_bound}
\sup_{t\in[0,T]}\|\psi(t)\|^2 \leq C\big(\|\psi^T\|^2 + \|y_x\|_{L^2(Q)}^2 + \|F\|_{L^2(Q)}^2\big),
\end{equation}
and integration gives
\begin{equation}\label{eq:psi_H1_bound}
\int_0^T \|\psi_x\|^2\,dt \leq C\big(\|\psi^T\|^2 + \|y_x\|_{L^2(Q)}^2 + \|F\|_{L^2(Q)}^2\big),
\end{equation}
where the constants $C$ in \eqref{eq:psi_L2_bound}--\eqref{eq:psi_H1_bound} depend on $\|u\|_W$ and $\|\varphi\|_W$.

For the time derivative: for $\phi \in H^1_0(I)$,
\begin{align*}
|(\psi_t, \pi_n\phi)| &\leq C\big(\|y_x\| + \|y\|_{L^\infty}\|\varphi_{xx}\| + \|\psi_x\| + \|F\|\big)\|\phi\|_{H^1_0},
\end{align*}
hence
\begin{equation}\label{eq:psi_t_bound}
\|\psi_t\|_{L^2(0,T;H^{-1}(I))} \leq C\big(\|y_x\|_{L^2(Q)} + \|\psi_x\|_{L^2(Q)} + \|y\|_W \|\varphi\|_W + \|F\|_{L^2(Q)}\big).
\end{equation}

The bounds \eqref{eq:psi_L2_bound}--\eqref{eq:psi_t_bound} allow us to extract a subsequence $\{\psi_{n_k}\}$ converging weakly in $H = L^2(0,T;H^1_0(I)) \cap H^1(0,T;H^{-1}(I))$ to the unique weak solution $\psi \in H$ of the linearized adjoint system. Uniqueness follows from the linearity of the system and the energy estimate applied to the difference of two solutions.

\section{Convergence of difference quotients of the adjoint state} \label{D2}

In this appendix we establish the convergence
$\widetilde\psi_\epsilon := \dfrac{\varphi_\epsilon - \varphi}{\epsilon} \longrightarrow \psi$
in $L^2(Q)$ used in the proof of Theorem~\ref{thm:optimality_conditions}, and prove the duality identity \eqref{CharControlStep2}.

The argument is presented for a generic base point $\bar f \in \mathcal{U}_{ad}$ (with associated state $\bar u := u(\bar f)$ and adjoint $\bar\varphi := \varphi(\bar f)$) and an arbitrary perturbation direction $\bar h := g - \bar f$ with $g \in \mathcal{U}_{ad}$. The case $\bar f = f^*$ is the one used in Step~1 of the proof of Theorem~\ref{thm:optimality_conditions}; the same argument with $\bar f$ in place of $f^*$ yields the differentiability of $f \mapsto \varphi(f)$ at every $f \in \mathcal{U}_{ad}$, used in Step~2c of that same proof. Throughout, we set $u_\epsilon := u(\bar f + \epsilon \bar h)$ and $\varphi_\epsilon := \varphi(\bar f + \epsilon \bar h)$. Note that no use is made of the optimality of $\bar f$.

\medskip
\noindent\emph{Step 1: equation satisfied by $\widetilde\psi_\epsilon$.}
Let $\varphi_\epsilon \in W$ denote the solution of the adjoint system \eqref{adjoint_system} with state $u_\epsilon$, source $G_\epsilon = L^*L(u_\epsilon - u_d)$ and terminal datum $\varphi_\epsilon^T = D^*D(u_\epsilon(T,\cdot)-u_d^T)$, and let $\bar\varphi \in W$ be the analogous solution for $\bar u$. Subtracting the two adjoint equations and dividing by $\epsilon$, $\widetilde\psi_\epsilon$ satisfies, in the strong sense,
\begin{equation}\label{eq:psitilde_eq}
\begin{cases}
-\widetilde\psi_{\epsilon,t} - a(\bar u)\widetilde\psi_{\epsilon,xx} - 2(b(\bar u)\bar u_x\widetilde\psi_\epsilon)_x + b'(\bar u)(\bar u_x)^2\widetilde\psi_\epsilon \\
\quad - A^a_\epsilon - A^b_\epsilon - A^{b'}_\epsilon = L^*L\,y_\epsilon, & \text{in } Q,\\[2pt]
\widetilde\psi_\epsilon = 0, & \text{on } \Sigma,\\[2pt]
\widetilde\psi_\epsilon(T,\cdot) = D^*D\big(y_\epsilon(T,\cdot)\big), & \text{in } I,
\end{cases}
\end{equation}
where $y_\epsilon = (u_\epsilon - \bar u)/\epsilon$ and the remainder terms collect the differences of the coefficients:
\begin{align*}
A^a_\epsilon &:= \frac{a(u_\epsilon)-a(\bar u)}{\epsilon}\,\varphi_{\epsilon,xx},\\
A^b_\epsilon &:= 2\Big[\frac{b(u_\epsilon)-b(\bar u)}{\epsilon}u_{\epsilon,x}\,\varphi_\epsilon + b(\bar u)\frac{u_{\epsilon,x}-\bar u_x}{\epsilon}\,\varphi_\epsilon + b(\bar u)\bar u_x\,\widetilde\psi_\epsilon\Big]_x,\\
A^{b'}_\epsilon &:= \frac{b'(u_\epsilon)-b'(\bar u)}{\epsilon}u_{\epsilon,x}^2\,\varphi_\epsilon + b'(\bar u)\frac{u_{\epsilon,x}^2 - (\bar u_x)^2}{\epsilon}\,\varphi_\epsilon + b'(\bar u)(\bar u_x)^2\,\widetilde\psi_\epsilon.
\end{align*}
By writing $u_{\epsilon,x}+\bar u_x = 2\bar u_x + \epsilon y_{\epsilon,x}$ and using the bivariate Mean-Value Theorem applied to $a$ and $b$ as in Theorem~\ref{thm:gateaux_differentiability} (Claim 2), \eqref{eq:psitilde_eq} can be rewritten in the form of the linearized adjoint system \eqref{linearized_adjoint} with state $y = y_\epsilon$, source
\[
F_\epsilon := L^*L\,y_\epsilon + R_\epsilon,
\]
and terminal datum $\psi^T_\epsilon := D^*D(y_\epsilon(T,\cdot))$, where the remainder $R_\epsilon$ collects all $O(\epsilon)$-corrections coming from the discrepancies $a(u_\epsilon)\!\neq\!a(\bar u)$, $b(u_\epsilon)\!\neq\!b(\bar u)$, $\varphi_\epsilon\!\neq\!\bar\varphi$, etc.

\medskip
\noindent\emph{Step 2: $R_\epsilon \to 0$ in $L^2(Q)$ as $\epsilon\to 0^+$.}
By Lemma~\ref{lem:adjoint_system}, applied with the data $G_\epsilon$ and $\varphi^T_\epsilon$, the family $\{\varphi_\epsilon\}$ is uniformly bounded in $W$:
\[
\|\varphi_\epsilon\|_W \leq C\big(\|L\|^2\|u_\epsilon - u_d\|_{L^2(Q)} + \|D^*D\|_{\mathcal{L}(H^1_0(I))}\|u_\epsilon(T,\cdot)-u_d^T\|_{H^1_0(I)}\big) \leq C,
\]
the last bound being uniform in $\epsilon$ by the uniform bound \eqref{uniform_u_bound} of Step~2a (which, recall, holds at every $f \in \mathcal{U}_{ad}$, in particular at $\bar f + \epsilon \bar h$ for all $\epsilon \in [0,1]$). The local Lipschitz argument of Corollary~\ref{cor:continuity}, applied to the difference of two adjoint problems with the same coefficient structure but different sources and terminal data, yields
\begin{equation}\label{eq:phi_lipschitz}
\|\varphi_\epsilon - \bar\varphi\|_W \leq C\big(\|u_\epsilon - \bar u\|_W + \|u_\epsilon(T,\cdot) - \bar u(T,\cdot)\|_{H^1_0}\big) \leq C\,\epsilon\,\|\bar h\|_{H^1(0,T;L^2(I))},
\end{equation}
so that $\widetilde\psi_\epsilon = (\varphi_\epsilon-\bar\varphi)/\epsilon$ is uniformly bounded in $W$. Combining \eqref{eq:phi_lipschitz} with the bivariate MVT bounds
\[
\Big|\frac{a(u_\epsilon)-a(\bar u)}{\epsilon} - a'(\bar u)y_\epsilon\Big| \leq C\,\epsilon\,|y_\epsilon|^2,\qquad
\Big|\frac{b(u_\epsilon)-b(\bar u)}{\epsilon} - b'(\bar u)y_\epsilon\Big| \leq C\,\epsilon\,|y_\epsilon|^2,
\]
and the analogous identity for $b'$, every individual term contributing to $R_\epsilon$ is bounded in $L^2(Q)$ by $C\,\epsilon$ times norms of $y_\epsilon$, $\varphi_\epsilon$, $u_\epsilon$, $\bar u$ that are uniform in $\epsilon$. Hence $R_\epsilon \to 0$ in $L^2(Q)$ as $\epsilon \to 0^+$.

\medskip
\noindent\emph{Step 3: passage to the limit.}
By Lemma~\ref{lem:linearized_adjoint}, applied to $\widetilde\psi_\epsilon$ (which solves \eqref{linearized_adjoint} with $y = y_\epsilon$, $F = F_\epsilon$, $\psi^T = D^*D(y_\epsilon(T,\cdot))$), the family $\{\widetilde\psi_\epsilon\}$ is uniformly bounded in $H$. By Corollary~\ref{cor:limit_strong_weak}, $y_\epsilon \to \bar y$ strongly in $L^2(0,T;H^1_0(I))\cap C([0,T];L^2(I))$, where $\bar y := \mathcal{F}'(\bar f)\bar h$, so $D^*D(y_\epsilon(T,\cdot)) \to D^*D(\bar y(T,\cdot))$ in $H^1_0(I)$ by (\textbf{A6}); $L^*L\,y_\epsilon \to L^*L\,\bar y$ in $L^2(Q)$; and $R_\epsilon \to 0$ in $L^2(Q)$ by Step~2. Extracting a subsequence, $\widetilde\psi_\epsilon \rightharpoonup \widetilde\psi$ weakly in $H$. Passing to the limit in the weak formulation of \eqref{linearized_adjoint} (with the relevant strong convergences in the right-hand side and weak convergences in the test integrals), we conclude that $\widetilde\psi$ is the unique weak solution of \eqref{linearized_adjoint} with $y = \bar y$, $F = L^*L\,\bar y$, and $\psi^T = D^*D(\bar y(T,\cdot))$, i.e.\ $\widetilde\psi = \psi$. By the same compactness argument, the convergence is in fact strong in $L^2(Q)$ (and in $L^2(0,T;H^1_0(I))$). The independence of the limit on the subsequence then yields the convergence along the full family $\epsilon\to 0^+$. In particular, the map $f \mapsto \varphi(f)$ is G\^ateaux differentiable at $\bar f$, with derivative given by the unique solution of the linearized adjoint system \eqref{linearized_adjoint} associated with $\bar y = \mathcal{F}'(\bar f)\bar h$.

\medskip
\noindent\emph{Step 4: proof of the duality identity \eqref{CharControlStep2}.}
We specialize to $\bar f = f^*$, $\bar u = u^*$, $\bar\varphi = \varphi^*$, and $\bar y = y^*$, the linearized state with right-hand side $g - f^*$. We multiply the equation for $\varphi^*$ by $y^*$, integrate over $Q$, integrate by parts in space (using $y^* = \varphi^* = 0$ on $\Sigma$), and integrate by parts in time (using $y^*(0,\cdot) = 0$ and $\varphi^*(T,\cdot) = D^*D(u^*(T,\cdot)-u_d^T)$):
\begin{align*}
&\int_Q L^*L(u^*-u_d)\, y^*\, d(t,x) \\
&\quad= \int_Q\big[-\varphi^*_t - a(u^*)\varphi^*_{xx} - 2(b(u^*)u^*_x\varphi^*)_x + b'(u^*)(u^*_x)^2\varphi^*\big] y^*\, d(t,x)\\
&\quad= \int_I [\varphi^*(T,\cdot) y^*(T,\cdot) - \varphi^*(0,\cdot) y^*(0,\cdot)]\, dx \\
&\qquad+ \int_Q \varphi^* \big[y^*_t - (a(u^*)y^*)_{xx} + 2 b(u^*) u^*_x y^*_x + b'(u^*)(u^*_x)^2 y^*\big]\, d(t,x)\\
&\quad= \int_I D^*D(u^*(T,\cdot)-u_d^T)\, y^*(T,\cdot)\,dx + \int_Q \varphi^* (g - f^*)\, d(t,x),
\end{align*}
which yields \eqref{CharControlStep2} after rearrangement. Here, in the last step, we used that $y^*$ solves the linearized state system \eqref{linearized_system} with right-hand side $h = g - f^*$.



\end{document}